\newtheorem{theorem}{Theorem}[section]
\newtheorem{lemma}[theorem]{Lemma}
\newtheorem{proposition}[theorem]{Proposition}
\newtheorem{corollary}[theorem]{Corollary}
\theoremstyle{definition}
\newtheorem{definition}[theorem]{Definition}
\theoremstyle{remark}
\newtheorem{remark}[theorem]{Remark}
\numberwithin{equation}{section}
\newcommand{\R}{\mathbb{R}}
\newcommand{\N}{\mathbb{N}}
\newcommand{\Z}{\mathbb{Z}}
\DeclareMathOperator{\divr}{div}
\DeclareMathOperator{\supp}{supp}
\DeclareMathOperator{\diam}{diam}
\begin{document}

\title[Partially regular weak solutions of the Navier-Stokes equations]{Partially regular weak solutions of the Navier-Stokes equations in $\R^4 \times [0,\infty[$}

\author{Bian Wu}
\address{ETH Z\"urich, R\"amistrasse 101, 8092 Z\"urich, Switzerland}
\email{bian.wu@math.ethz.ch}

\maketitle

\begin{abstract}
We show that for any given solenoidal initial data in $L^2$ and any solenoidal external force in $L_{\text{loc}}^q \bigcap L^{3/2}$ with $q>3$, there exist partially regular weak solutions of the Navier-Stokes equations in $\R^4 \times [0,\infty[$ which satisfy certain local energy inequalities and whose singular sets have locally finite $2$-dimensional parabolic Hausdorff measure. With the help of a parabolic concentration-compactness theorem we are able to overcome the possible lack of compactness arising in the spatially $4$-dimensional setting by using defect measures, which we then incorporate into the partial regularity theory.
\end{abstract}

\section{The Navier-Stokes equations} \label{intro}

\subsection{Introduction}

The nonstationary Navier-Stokes equations governing the motion of an incompressible viscous fluid in $\R^n \times [0,T]$ with $T>0$ are given by
\begin{equation} \label{navierstokes}
  \begin{aligned}
  \partial_t u - \Delta u + (u \cdot \nabla) u + \nabla p &= f \quad (x,t) \in \R^n \times [0,T] \\
  \divr u &= 0
  \end{aligned}
\end{equation}
with initial condition $u(x,0)=u_0(x), u_0 \in L^2(\R^n)$. Note that is suffices to consider weakly solenoidal forces $f$. Indeed, by Helmholtz-Weyl decomposition, for any external force $f \in L^q(\R^n)$, $q>1$, we have a decomposition $f=f_s+f_{p'}$ such that $\divr f_s = 0$ and $f_{p'} = \nabla p'$ for some $p' \in W^{1,q}(\R^n)$. We can insert the component $f_{p'}$ into the pressure term $\nabla p$.\par

The existence and regularity problem of the Navier-Stokes equations is one of the most significant open questions in the field of partial differential equations. The case of $\R^2$ was known to Leray \cite{leray1933etude} in 1933. Later, the case of $2$D domains with boundaries was settled by Ladyzhenskaya \cite{ladyzhenskaia1960solution} in 1959. The case $n=3$ is one of the millennium problems and is still open. However, remarkable progress has been made since the pioneering work by Leray in 1930s. Leray \cite{leray1934mouvement} and Hopf \cite{hopf1950anfangswertaufgabe} proved the existence of weak solutions of these equations in dimensions $n \geq 2$ in the whole space and on bounded open domains with smooth boundary in 1934 and 1950, respectively. These weak solutions, called Leray-Hopf weak solutions, satisfy \eqref{navierstokes} in the distributional sense and belong to $L_t^\infty L^2_x \bigcap L^2_tH_x^1(\R^n \times [0,T])$. Leray-Hopf weak solutions also satisfy the following global energy inequality
\begin{equation}
    \frac{1}{2} \|u(\cdot,t)\|^2_{L^2(\R^n)} + \int_0^t\int_{\R^n} |\nabla u|^2 dxdt \leq \frac{1}{2} \|u_0\|^2_{L^2(\R^n)}.
\end{equation}
For Leray-Hopf weak solutions, this inequality can be obtained from a Galerkin approximation with the help of weak convergence results, instead of testing \eqref{navierstokes} with $u$. Indeed, we only have $(u \cdot \nabla)u \in L^{(n+2)/(n+1)}$ because of the embedding $L_t^\infty L^2_x \bigcap L^2_tH_x^1 \subset L_{t,x}^{2+4/n}$, and the product of this term with $u \in L_t^\infty L^2_x \bigcap L^2_tH_x^1$ is not necessarily integrable. \par

An important step towards a better understanding of the Navier-Stokes equations in dimension $n=3$ was made by Scheffer \cite{scheffer1977hausdorff,scheffer1978navier,scheffer1980navier} and Caffarelli, Kohn and Nirenberg \cite{caffarelli1982partial}. In \cite{scheffer1980navier} Scheffer pioneered the partial regularity theory by introducing the notion of suitable weak solutions and proving their existence in dimension $n=3$ when $f=0$. Moreover, he proved that the singular sets of these suitable weak solutions have finite $\frac{5}{3}$-dimensional Hausdorff measure in space-time. In \cite{scheffer1978navier}, Scheffer showed that in dimension $n=4$, there exist weak solutions whose singular sets have finite $3$-dimensional Hausdorff measure in space-time. Caffarelli, Kohn and Nirenberg made remarkable improvements and generalizations in dimension $n=3$ by proving local partial regularity results for a general force and by proving that the $1$-dimensional parabolic Hausdorff measure of the singular sets of suitable weak solutions is zero.
\par

The suitable weak solutions are distributional solutions in the class $L_t^\infty L^2_x \bigcap L^2_tH_x^1$ which satisfy a local energy inequality, i.e., for any $-r_0^2 < t < 0$ and any scalar function $0 \leq \phi \in C^{\infty}(Q_{r_0})$ with $\phi = 0$ in $Q_{r_0} \backslash Q_{r_1}$ and $\phi = 1$ in $Q_{r_2}$ for any $0<r_2<r_1<r_0$, the following inequality holds,
\begin{equation} \label{localenergy}
  \begin{split} 
  \int_{Q_{r_0} \times \{t\}} |u|^2 \phi dx &+ 2 \int^0_{-r_0^2} \int_{Q_{r_0}} |\nabla u|^2 \phi dxds \\
  &\leq \int^0_{-r_0^2} \int_{Q_{r_0}} \Big( |u|^2 ( \partial_t \phi + \Delta \phi ) + (|u|^2+2p)u \cdot \nabla \phi + 2f \cdot u \Big) dxds.
  \end{split}
\end{equation}
For a general function in $L_t^\infty L^2_x \bigcap L^2_tH_x^1$, this first integral may not be well-defined. However, for Leray-Hopf weak solutions, one can redefine the solution in those time slices with zero measure in $t$ and make this solution weakly continuous as a mapping from time interval to $L^2_x$. Note that it is unknown if Leray-Hopf weak solutions satisfy the local energy inequality, since $u\phi$ is not an admissible test function. Caffarelli, Kohn and Nirenberg \cite{caffarelli1982partial} proved their existence in a general setting by retarded mollification, then they showed that the sequence of the approximation solutions $\{u_k\}_{k \in \N}$ of the mollified equations is relatively compact in $L_{t,x}^3$-topology. Because we have $L_t^\infty L^2_x \bigcap L^2_tH_x^1 \subset L_{t,x}^{2+4/n}$, the compactness in $L_{t,x}^3$ in dimension $n=3$ can be obtained from boundedness of $\{u_k\}_{k \in \N}$ in $L_{t,x}^{10/3}$ and compactness in $L^2_{t,x}$. However, in dimension $n=4$ we only have that the approximation sequence is relatively compact in $L_{t,x}^\alpha$ for $\alpha<3$, which is not enough for the local energy inequality to hold in the limit. As the local energy inequality is the most important ingredient for partial regularity theory, the following natural question arises, mentioned by Dong and Du in Remark 1.1 of \cite{dong2007partial}. \par

\textbf{Open question:} Do there exist partially regular weak solutions of the Navier-Stokes equations in $4$D which satisfy certain local energy estimates? \par

This problem has not been answered for quite a long time; however, there have been many results in this direction. Early in 1978, Scheffer \cite{scheffer1978navier} constructed weak solutions $u$ in $\R^4 \times [0,+\infty)$ which are continuous outside a closed set of finite $3$-dimensional Hausdorff measure. We remark that Scheffer's weak solutions do not necessarily satisfy the local energy inequality, because of the loss of compactness mentioned above. Dong and Du \cite{dong2007partial} showed that the 2-dimensional Hausdorff measure of the singular sets of local-in-time regular weak solutions at the first blow-up time is zero. Under the assumption on the existence of the suitable weak solutions, Dong, Gu \cite{dong2014partial} and Wang, Wu \cite{wang2014unified} independently proved that the $2$-dimensional parabolic Hausdorff measure of the singular sets is zero. A similar study of partial regularity has also been carried out for the magneto-hydrodynamic equations by Choe and Yang \cite{choe2015hausdorff}. In the direction of the local energy inequality, Biryuk, Craig and Ibrahim \cite{biryuk2007construction} discussed the difficulty of validating the local energy inequality in higher dimensions $n \geq 4$. Taniuchi \cite{taniuchi1997generalized} proved the local energy inequality in the dimensions $3 \leq n \leq 10$, given some conditional regularity on distributional solutions. \par

\subsection{New observations, main result, and the organization of this paper}
The aim of this paper is to answer the open question stated above, by constructing weak solutions of the Navier-Stokes equations in $4$D satisfying the local energy inequalities \eqref{evo_local_energy_1} and \eqref{evo_local_energy_2} below and showing that these solutions are global-in-time partially regular with singular sets of finite $2$-dimensional parabolic Hausdorff measure. Thus, we improve Scheffer's result in \cite{scheffer1978navier} by refining the estimate of the Hausdorff dimension of singular sets from 3 to 2 and allowing general forces. We remark that the local energy inequalities \eqref{evo_local_energy_1} and \eqref{evo_local_energy_2} are slightly weaker than the local energy inequality \eqref{localenergy}. Nevertheless, they suffice to give all the partial regularity criteria which Caffarelli, Kohn and Nirenberg have obtained for $3$D case. \par

As we have discussed before, the $L^3_{t,x}$-norm is critical for the local energy inequality (1.3) in dimension $n=4$, in the sense that we need to deal with a possible loss of compactness of smooth approximating sequences in this norm. For this reason, we develop a parabolic concentration-compactness method to study concentration phenomena in a space-time topology. Our two new observations are as follows. 
\begin{enumerate}[leftmargin=*]
  \item The measures $|u_k|^3dxdt$ induced by the solutions $u_k$ of the regularized equations \eqref{reg_navierstokes} below are relatively compact in the sense of measures. 
  \item The limit measures have the same scaling properties as classical solutions of \eqref{navierstokes} and satisfy the local energy inequalities \eqref{evo_local_energy_1} and \eqref{evo_local_energy_2}.
\end{enumerate}
\par
With these ingredients, we are able to estimate the concentration locally and construct solutions satifying local energy estimates involving concentration measures. To couple these measures and our weak solutions, we introduce a new notion of generalized solution, namely the notion of weak solution set in \Cref{weaksolutionset_nonsta}. Finally, we use the iteration scheme as Caffarelli, Kohn and Nirenberg \cite{caffarelli1982partial} to show that the functions $(u,p)$ in the weak solution set that we construct are partially regular. \par

Our main theorem is stated for $\R^4$ as follows. However, the method we use is robust and it also applies to more general open domains, for instance, bounded open domains with smooth boundary.

 \begin{theorem} \label{maintheorem_nonsta}
Fix a possibly large time $T>0$ and let $D:=\R^4 \times [0,T]$. Given a weakly solenoidal force $f \in L_{\text{loc}}^q \bigcap L^{3/2}(D)$, $q>3$, there exists a weak solution set $(u,p,\lambda,\omega)$ for the nonstationary Navier-Stokes equations \eqref{navierstokes} in $D$ which satisfies the local energy inequalities \eqref{evo_local_energy_1} and \eqref{evo_local_energy_2}. Moreover, $(u,p)$ is a weak solution of the Navier-Stokes equations with $u \in L_t^{\infty}L_x^2 \bigcap L_t^2H_x^1 (D)$ and $p \in L^{3/2}(D)$, and the singular set $S$ of $u$ as defined in \Cref{singularset_nonsta} satifies $\mathcal{P}^2(S) < \infty$.
\end{theorem}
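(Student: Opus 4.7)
The plan is to adapt the Caffarelli-Kohn-Nirenberg scheme of retarded mollification to the four-dimensional setting, with the key new ingredient that the failure of $L^3_{t,x}$ compactness is absorbed into defect measures $\lambda$ and $\omega$ which become part of the solution data. First I would construct approximate solutions $(u_k,p_k)$ on $D$ of a regularized Navier-Stokes system along the lines of the equation \eqref{reg_navierstokes} alluded to in the introduction. Because the mollified convective term is compatible with testing by $u_k$ itself, each $(u_k,p_k)$ satisfies a genuine energy identity, giving the uniform bound $u_k \in L^\infty_t L^2_x \cap L^2_t H^1_x(D)$ and, by Calder\'on-Zygmund estimates on the pressure equation, a uniform bound $p_k \in L^{3/2}(D)$.

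Next I would extract a limit. Weak compactness in the energy space produces $u$, weak compactness in $L^{3/2}$ produces $p$, and Aubin-Lions yields strong convergence $u_k \to u$ in $L^\alpha_{t,x}$ for every $\alpha < 3$. To treat the critical exponent $\alpha = 3$, I invoke the parabolic concentration-compactness theorem and Observation~(1) to obtain, along a subsequence, weak-$*$ convergence of measures
\[
|u_k|^3 \, dx\, dt \rightharpoonup |u|^3 \, dx\, dt + \lambda, \qquad |\nabla u_k|^2 \, dx\, dt \rightharpoonup |\nabla u|^2 \, dx\, dt + \omega,
\]
for nonnegative Radon measures $\lambda,\omega$ on $D$; Observation~(2) records that these measures have the correct parabolic scaling. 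Passing to the limit in the local energy identity satisfied by $(u_k,p_k)$ picks up defect contributions from $\lambda$ and $\omega$, which yields the slightly weaker local energy inequalities \eqref{evo_local_energy_1} and \eqref{evo_local_energy_2} and exhibits the quadruple $(u,p,\lambda,\omega)$ as a weak solution set in the sense of \Cref{weaksolutionset_nonsta}.

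For the partial regularity statement I would run the Caffarelli-Kohn-Nirenberg iteration, enlarged so that the scale-invariant smallness quantity on a parabolic cylinder $Q_r(z)$ also counts appropriately rescaled masses of $\lambda$ and $\omega$; in particular the Dirichlet-defect contribution $\omega(Q_r(z))/r^2$ is precisely the quantity whose decay at a point rules out $z \in S$ and is also the one naturally controlling the $2$-dimensional parabolic Hausdorff measure. Smallness of this combined quantity at one scale would, via \eqref{evo_local_energy_1}-\eqref{evo_local_energy_2}, force its decay at all smaller scales, after which a Vitali covering applied to the complementary bad set produces $\mathcal{P}^2(S) < \infty$ as in \cite{caffarelli1982partial}. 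The singular set defined in \Cref{singularset_nonsta} is then governed by the same quantity.

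The main obstacle I expect lies in this last step. The presence of the defect measures means the iteration has to be carried out for a strictly larger scale-invariant quantity than in three dimensions, and one must establish both that the defect terms appearing on the right-hand sides of \eqref{evo_local_energy_1}-\eqref{evo_local_energy_2} are controlled uniformly in scale by this quantity, and that all the rescaled defect masses vanish at every regular point. This bookkeeping — showing that $\lambda$ and $\omega$ behave, at small scales near a regular point, as benignly as the classical $|u|^3$ and $|\nabla u|^2$ densities — is what makes the four-dimensional iteration substantially more delicate than its three-dimensional counterpart; once it is in place, the remaining argument parallels \cite{caffarelli1982partial}.
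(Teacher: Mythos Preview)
Your outline matches the paper's strategy: regularize, extract defect measures via parabolic concentration--compactness, pass to the limit in the approximate local energy identities to obtain \eqref{evo_local_energy_1}--\eqref{evo_local_energy_2}, then run a CKN-type iteration augmented by the defect masses and finish with a Vitali covering. Two corrections are worth making. First, you have interchanged $\lambda$ and $\omega$ relative to the paper's convention (and hence relative to the theorem statement and to \eqref{evo_local_energy_1}--\eqref{evo_local_energy_2}): in the paper $\lambda$ is the \emph{gradient} defect and $\omega$ the \emph{$L^3$} defect, so the quantity governing $\mathcal{P}^2(S)$ is $r^{-2}\!\int_{Q_r}(|\nabla u|^2\,dxdt + d\lambda)$. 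Second, the structural fact that makes your anticipated ``bookkeeping'' go through --- and which your sketch does not name --- is the absolute continuity $\omega \ll \lambda$ together with the quantitative bound $\omega(Q)\lesssim \liminf_k\|u_k-u\|_{L^\infty_tL^2_x(Q)}\,\lambda(Q)$ from \Cref{parabolic_concentration_compactness}; this is precisely what allows the $L^3$-defect term $G_c$ to be interpolated against $A^{1/2}\delta_c$ in the iteration, and together with the pressure-concentration estimate of \Cref{pressure_convergence_nonsta} (controlling the defect in $|u_kp_k|$ by $\omega$) it is what closes the scheme. A minor point: the paper uses Hopf's spatial mollification of the convective velocity rather than CKN's retarded mollification, but either regularization would serve here.
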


\subsection{Connection with the stationary Navier-Stokes equations}
 Note that in the sense of energy estimates, the nonstationary Navier-Stokes equations in ${\R^n \times [0,\infty[}$ are similar to the stationary equations in $\R^{n+2}$. The stationary Navier-Stokes equations are given by
\begin{equation} \label{sta_navierstokes}
  \begin{split}
  - \Delta u + (u \cdot \nabla) u + \nabla p &= f \quad x \in \R^{n+2} \\
  \divr u &= 0.
  \end{split}
\end{equation}
The analogue of the $3$D nonstationary case is the $5$D stationary case. In 1988, Struwe proved partial regularity for the stationary case in $\R^5$ in \cite{struwe1988partial}. The existence of regular solutions in $\R^5$ and in periodic domain $\R^5/\Z^5$ was independently proved by Struwe in \cite{struwe1995regular} and Frehse, R\accent23 u\v zi\v cka in \cite{frehse1995existence}. Later, for $f \in L^{\infty}$ and bounded subdomains of $\R^6$, Frehse and R\accent23 u\v zi\v cka proved the existence of regular solutions in \cite{frehse1994regularity1} and \cite{frehse1996existence}. For general open subdomains of $\R^6$ or for unbounded forces $f$, the existence of regular solutions or suitable weak solutions to \eqref{sta_navierstokes} is still open.
\par
The strategy that we use to show \Cref{maintheorem_nonsta} can also be applied to the stationary Navier-Stokes equations in dimension $6$, because the measures $|u_k|^3dx$ induced by the solutions $u_k$ of the regularized equations are relatively compact, modulo the mass vanishing at infinity. However, the stationary $6$D case and the nonstationary $4$D case also differ in some interesting aspects. Although one time dimension counts for two space dimensions, the weak solutions are less regular in time than in space. Consequently, we may say the nonstationary case in space dimension $n=4$ is less regular than the stationary case in $\R^6$. This slight regularity gap actually leads to a manifest difference in the concentration of $L^3_{t,x}$ mass. For instance, we only have point concentration in the stationary case, but line concentration might occur in the nonstationary case. The detailed discussion of the stationary case in $6$D will be given in a separate paper.

\subsection{Connection with variational problems}
The concentration-compactness principle developed by Lions in \cite{lions1985concentration} has been shown to be an effective tool for dealing with elliptic PDEs and variational problems. Basically, this tool may help us understand the process of passing to a weak limit in many cases. By lower semi-continuity, if $x_n \rightharpoonup x$ in a Banach space $X$, we have
\begin{equation} \label{con_var_eq0} \|x\|_X \leq \liminf_{k \rightarrow +\infty} \|x_n\|_X. \end{equation}
Usually, one would like to know if equality holds and if not, why equality fails. \par

A classical example is the existence of extremal functions for Sobolev embeddings. This amounts to find a function such that the following embedding inequality holds with equality,
\begin{equation} \label{con_var_eq1} 
  S\|u\|_{L^{nl/(n-kl)}(\R^n)} \leq \|u\|_{W^{k,l}(\R^n)} \quad k \in \N, l \geq 1,
\end{equation}
where $kl<n$ and $S>0$ is the maximal Sobolev constant.
\par
After suitable translations and dilations, a minimizing sequence in this problem incurs no concentration, thus is relatively compact in $L^{nl/(n-kl)}(\R^n)$. \par
However, the Navier-Stokes equations are not known to admit a variational structure, and translations or dilations will change the external forces and boundary conditions, so it is not possible to normalize solutions using either of these tools. However, Gallagher, Koch and Planchon \cite{gallagher2016blow} used profile decomposition to show that a local-in-time solution of the Navier-Stokes equations which develops a singularity at finite time must blow up in scale-invariant norms at this time, where they also study the residual term in weak convergence. Nevertheless, this approach does not work for our purpose because of the issues concerning controlling external forces, initial conditions and boundary conditions. Despite these technical difficulties that naturally arise in fluid dynamics equations, we shall see that possible concentration loss of $L^3_{t,x}$ mass either can be controlled or causes no harm to the regularity theory we aim to pursue.

\subsection{Connection with other PDEs}
Apart from the stationary Navier-Stokes equations in $\R^6$, our strategy may be applied in a wide class of PDEs. For instance, the following incompressible magneto-hydrodynamic equations have a structure similar to the Navier-Stokes equations,
\begin{equation} \label{magnavier}
  \begin{split}
  \partial_t u - \Delta u + (u \cdot \nabla) u + \nabla p &= (h \cdot \nabla) h \\
  \divr u &= 0 \\
  \partial_t h - \Delta h + (u \cdot \nabla)h - (h \cdot \nabla)u &= 0 \\
  \divr h &= 0.
  \end{split}
\end{equation}
Gu \cite{gu2015regularity} obtained some partial regularity criteria for suitable weak solutions to \eqref{magnavier} in space dimension $4$ assuming that these solutions exist. It is likely that one can construct partially regular weak solutions of the incompressible magneto-hydrodynamic equations with our strategy, but we do not pursue it here.

\section{Weak solution sets and local energy inequalities} \label{local_energy_nonstationary}

To construct weak solutions for the Navier-Stokes equations in the space-time domain $D:=\R^4 \times [0,T]$, we consider the regularized Navier-Stokes equations, namely
\begin{equation} \label{reg_navierstokes}
  \begin{split}
  \partial_t u_k - \Delta u_k + [(\chi_k * u_k) \cdot \nabla] u_k + \nabla p_k &= f \\
  \divr u_k &= 0 \\
  u(\cdot, 0) &= u_0,
  \end{split}
\end{equation}
where $\{\chi_k\}_{k \in \N} \subset C_c^{\infty}(\R^4)$ are standard mollifiers. This regularization was used by Hopf in \cite{hopf1950anfangswertaufgabe} to show the existence of weak solutions. The first step is to use a Galerkin method to construct stronger solutions for the regularized equations with uniform energy estimates. In this section, we also prove that the measures induced by $|u_k|^3dxdt$ in the approximation sequence are relatively compact in the sense of measures. Next, we set up a parabolic concentration-compactness framework.  With all these ingredients and careful estimates on the pressure $p$, we can construct weak solution sets satisfying certain local energy estimates.

\subsection{Solving regularized equations and weak compactness of measures}
Before we prove the existence of weak solutions of the regularized Navier-Stokes equations \eqref{reg_navierstokes}, we recall the following definition of distributional solutions of the Navier-Stokes equations \eqref{navierstokes}, which also avoids any possible ambiguity of realizing the initial data $u_0$.
\begin{definition}
A pair of functions $(u,p) \in L^2_tH_{x,\text{loc}}^1(D) \times L_{\text{loc}}^{1+2/n}(D)$ are distributional solutions of \eqref{navierstokes} if $u$ is weakly divergence-free and for any $\varphi \in C_c^{\infty}(\R^4 \times [0,T])$ and any $t \in [0,T]$, we have
\begin{equation} \label{weaksolution_eq0}
  \begin{split}
    &-\int_0^t\int_{\R^n} u^i\partial_t\varphi_i dxdt + \int_0^t\int_{\R^n} \partial_j u^i \partial_j \varphi_i dxdt + \int_0^t\int_{\R^n} u^j \partial_j u^i \varphi_i dxdt \\
    &- \int_0^t\int_{\R^n} p \divr \varphi dxdt - \int_0^t\int_{\R^n} f_i \varphi_i dxdt = \int_{\R^n} \big(u_0 \cdot \varphi(0) - u(t) \cdot \varphi(t)\big) dx.
  \end{split}
\end{equation}
\begin{remark} \label{rmk_general_ns}
If we restrict the test functions to divergence-free functions, then we have a weak formulation of \eqref{navierstokes} without $p$. If we test with $\nabla \phi$ where $\phi$ is a scalar function, then we obtain the following well-known elliptic equation for the pressure
\begin{equation} \label{pressure_equ} - \Delta p = \partial_i \partial_j (u^i u^j). \end{equation}
\end{remark}
\par
Distributional solutions for the regularized Navier-Stokes equations \eqref{reg_navierstokes} can be defined in a similar way. We now show that there exist weak solutions for \eqref{reg_navierstokes} with uniform energy bounds and satisfying local energy inequality.
\end{definition}
\begin{lemma} \label{preparation_existence_nonsta}
Let $\{\chi_k\}_{k \in \N}$ be a sequence of standard mollifiers and $f \in L^{3/2}(D)$, then we have a sequence $\{(u_k,p_k)\}_{k \in \N} \subset L_t^{\infty} L_x^2 \bigcap L_t^2 H_x^1(D) \times L^{3/2}(D)$ such that $(u_k,p_k)$ is a distributional solution to the regularized nonstationary Navier-Stokes equations \eqref{reg_navierstokes}. Moreover,
\begin{enumerate}[leftmargin=*]
  \item $\{u_k\}_{k \in \N}$ is uniformly bounded in $L_t^{\infty} L_x^2 \bigcap L_t^2 \dot{H}_x^1(D)$,
  \item $\{p_k\}_{k \in \N}$ is uniformly bounded in $L^{3/2}(D)$,
  \item $\{\partial_t u_k\}_{k \in \N}$ is uniformly bounded in $L_t^1 \mathbb{H}_{x,\text{loc}}^*(D)$,
\end{enumerate}
where $\mathbb{H}(\R^4) := \{ \varphi \in H^1(\R^4) | \divr \varphi = 0 \}$ and $\mathbb{H}^*(\R^4)$ is the dual space of $\mathbb{H}(\R^4)$.
Thereby we can pass to the weak limit,
\begin{equation} \label{nonstationary_convergence}
  \begin{split}
  u_k \rightarrow u \quad &\text{weakly in } L_t^2 H_x^1, \\
  u_k \rightarrow u \quad &\text{weakly}-* \text{ in } L_t^{\infty} L_x^2, \\
  p_k \rightarrow p \quad &\text{weakly in } L^{3/2}.
  \end{split}
\end{equation}
This sequence satisfies the local energy inequality, i.e. for any bounded smooth function $\phi$ with bounded derivatives,
\begin{equation} \label{evo_local_energy_seq}
  \begin{split}
  \int_{\R^4} |u_k(t)|^2 \phi(t)dx - \int_{\R^4} |u_0|^2 \phi(0) dx
    + \int_0^t \int_{\R^4} |\nabla u_k|^2 \phi dxdt &\\
  \leq \int_0^t \int_{\R^4} |u_k|^2 |\partial_t \phi + \Delta \phi| dxdt
    + \int_0^t \int_{\R^4} |u_k|^2 &(\tilde{u}_k \cdot  \nabla)\phi dxdt \\
      + \int_0^t \int_{\R^4} 2 p_k (u_k \cdot \nabla) \phi dx + \int_0^t &\int_{\R^4} f \cdot u_k \phi dxdt,
  \end{split}
\end{equation}
where $\tilde{u}_k := \chi_k * u_k$.
\end{lemma}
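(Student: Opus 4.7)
The plan is to construct $(u_k,p_k)$ for each fixed $k$ by a Galerkin scheme, extract uniform bounds from the energy identity, recover the pressure via Calder\'on--Zygmund theory, and finally derive the local energy inequality by testing against $u_k\phi$. For fixed $k$, I would pick an orthonormal basis $\{e_j\}_{j \in \N}$ of $L^2_\sigma(\R^4)$ contained in $C^\infty_c \cap \mathbb{H}(\R^4)$ and look for finite-dimensional approximations $u_k^{(m)} = \sum_{j=1}^m c_j(t) e_j$ solving the projection of \eqref{reg_navierstokes}. Since $\chi_k * u_k^{(m)}$ is smooth and remains divergence-free, the nonlinearity is locally Lipschitz on the finite-dimensional subspace and ODE theory gives short-time existence. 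Testing the projected equation against $u_k^{(m)}$ and using the cancellation $\int [(\chi_k * u_k^{(m)}) \cdot \nabla] u_k^{(m)} \cdot u_k^{(m)}\,dx = 0$ (from divergence-freeness of $\chi_k * u_k^{(m)}$) yields the energy identity
\begin{equation*}
  \tfrac{1}{2}\|u_k^{(m)}(t)\|_{L^2}^2 + \int_0^t \|\nabla u_k^{(m)}\|_{L^2}^2\,ds = \tfrac{1}{2}\|P_m u_0\|_{L^2}^2 + \int_0^t\!\!\int f\cdot u_k^{(m)}\,dx\,ds.
\end{equation*}
The Sobolev embedding $H^1(\R^4) \hookrightarrow L^4(\R^4)$ together with interpolation yields $L^\infty_t L^2_x \cap L^2_t H^1_x \hookrightarrow L^3_{t,x}$, so H\"older applied to the forcing term with $f \in L^{3/2}(D)$ and Young's inequality give an $m$-uniform bound on $\|u_k^{(m)}\|_{L^\infty_t L^2_x \cap L^2_t \dot H^1_x}$. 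This extends the solution to $[0,T]$, and a diagonal subsequence argument produces $u_k$ satisfying (1).

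For the pressure, applying $\divr$ to the momentum equation gives the Poisson problem $-\Delta p_k = \partial_i\partial_j\big((\chi_k * u_k)^j u_k^i\big)$. Defining $p_k$ via Riesz transforms, Calder\'on--Zygmund theory yields $\|p_k(\cdot,t)\|_{L^{3/2}_x} \lesssim \|(\chi_k * u_k)\otimes u_k(\cdot,t)\|_{L^{3/2}_x} \leq \|\chi_k * u_k\|_{L^3_x}\|u_k\|_{L^3_x}$. Integrating in time and using $\|\chi_k * u_k\|_{L^3_{t,x}} \leq \|u_k\|_{L^3_{t,x}}$ together with the embedding $L^\infty_t L^2_x \cap L^2_t H^1_x \hookrightarrow L^3_{t,x}$ gives (2). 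The bound (3) on $\partial_t u_k$ follows by reading $\partial_t u_k$ off the equation and pairing with $\varphi \in \mathbb{H}$ supported in a fixed compact set: the Stokes term is controlled by $\|\nabla u_k\|_{L^2}\|\nabla \varphi\|_{L^2}$, and the convective term, after integration by parts using $\divr(\chi_k * u_k)=0$, by $\|u_k \otimes \chi_k * u_k\|_{L^{4/3}}\|\nabla \varphi\|_{L^4}$, each of which is integrable in $t$.

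For the local energy inequality \eqref{evo_local_energy_seq}, the key observation is that once $\tilde u_k = \chi_k * u_k$ is treated as a given smooth, divergence-free drift, the equation for $u_k$ reduces to a linear parabolic system with smooth coefficients. Parabolic regularity theory then upgrades $u_k$ to a strong solution for which $u_k \phi$ is an admissible test function. Multiplying the equation by $u_k \phi$ and integrating by parts produces the identity corresponding to \eqref{evo_local_energy_seq} with equality (the pressure term arises from integration by parts against $\nabla p_k$, and the convective term produces $|u_k|^2 (\tilde u_k \cdot \nabla)\phi$ after using $\divr \tilde u_k = 0$). The inequality stated is then immediate. The weak convergences \eqref{nonstationary_convergence} follow from the uniform bounds (1)--(2) by Banach--Alaoglu after extracting a further subsequence.

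The hard part, I expect, will be the regularity upgrade needed to justify testing with $u_k\phi$: ensuring that $u_k$ inherits enough smoothness from the mollification of the drift to make this rigorous at the level of the Galerkin limit (rather than only for $u_k^{(m)}$) may require a bootstrap combining the smoothness of $\tilde u_k$ with the smoothing effect of the heat semigroup, or alternatively a time-mollification argument followed by passage to the limit. A subsidiary technical point is that the Calder\'on--Zygmund representation of $p_k$ must be set up on the whole space so that $p_k$ is a well-defined $L^{3/2}$ function rather than merely a distribution modulo harmonic polynomials.
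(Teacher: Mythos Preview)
Your proposal follows essentially the same route as the paper: Galerkin construction for each fixed $k$, energy bounds from testing with $u_k$, Calder\'on--Zygmund for the pressure, a duality estimate for $\partial_t u_k$, and the local energy inequality from testing with $u_k\phi$. One small slip: in your bound on the convective contribution to $\partial_t u_k$, the pairing $\|u_k\otimes\tilde u_k\|_{L^{4/3}_x}\|\nabla\varphi\|_{L^4_x}$ does not close, since $\|\nabla\varphi\|_{L^4(\R^4)}$ is not controlled by $\|\varphi\|_{H^1(\R^4)}$ (Sobolev in dimension~$4$ only gives $\varphi\in L^4$, not $\nabla\varphi\in L^4$). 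The fix is to skip the integration by parts and estimate $\int(\tilde u_k\cdot\nabla)u_k\cdot\varphi\,dx$ directly by $\|\tilde u_k\|_{L^4_x}\|\nabla u_k\|_{L^2_x}\|\varphi\|_{L^4_x}\lesssim\|u_k\|_{\dot H^1_x}^2\|\varphi\|_{H^1_x}$, which is the paper's estimate and is $L^1$ in time. As for the admissibility of $u_k\phi$, you are making it harder than necessary: the paper simply observes (Remark~\ref{rmk_admissiblet}) that for each fixed $k$ the mollified drift $\tilde u_k=\chi_k*u_k$ lies in $L^\infty_{t,x}$ (since $u_k\in L^\infty_tL^2_x$ and $\chi_k\in C^\infty_c$), so every term in the formal testing computation is integrable at the energy-class level and the inequality follows without a regularity bootstrap.
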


\begin{remark} \label{rmk_admissiblet} We do not need $\phi$ to have compact support. $\chi_k * u_k$ is bounded and smooth for every $k$. For any bounded function $\phi$ with bounded derivatives, $u_k \phi$ is in $L_t^{\infty} L_x^2 \bigcap L_t^2 H_x^1(D) \subset L^3(D)$; thus, $u_k \phi$ is an admissible test function. \end{remark}

\begin{proof}
The existence of $u \in L_t^{\infty} L_x^2 \cap L_t^2 H_x^1(D)$ can be proved by a standard Galerkin method. We refer to Theorem 4.4 and Theorem 14.1 in \cite{robinson2016three} for an exposition. The existence of $p \in L^{3/2}(D)$ is obtained by $L^p$-theory of elliptic operators and Calderon-Zygmund theory. \par
For the rest, note that in the regularized equations $u_k$ and $u_k \phi$ are admissible test functions. Testing with $u_k$ and $u_k \phi$ yields the uniform boundedness of $\{u_k\}_{k \in \N}$ and $\{p_k\}_{k \in \N}$ and the local energy inequality \eqref{evo_local_energy_seq}. \par
For the uniform boundedness of $\{\partial_t u_k\}_{k \in \N}$, we remark that the weak formulation \eqref{weaksolution_eq0} for $u_k$ is equivalent to
\[ \forall \hspace{1mm} \xi \in \mathbb{H}_x, \hspace{2mm}
  \langle \partial_t u_k, \xi\rangle_{\mathbb{H}_x^* \times \mathbb{H}_x}
    = -\int_{\R^4} \Big( \partial_j u_k^i \partial_j \xi_i + u_k^j \xi_i \partial_j u_k^i - f_i \xi_i \Big) dx \]
for almost every $t \in [0,T]$.
\par
For every $\xi \in C_c^{\infty}(\Omega)$, $\Omega \subset\joinrel\subset \R^4$ and almost every $t \in [0,T]$, we can estimate
\[
    \begin{split}
    \Big| \int_{\R^4} \Big( \partial_j u_k^i \partial_j \xi_i + u_k^j \xi_i \partial_j u_k^i - f_i \xi_i \Big) dx \Big|
    \leq& \|\nabla u_k\|_{L^2_x} \|\nabla \xi\|_{L^2_x}
       + \| \xi \|_{L^3_x} \| f \|_{L^{3/2}_x} \\
      &+ \|\nabla u_k\|_{L^2_x} \|\xi\|_{L^4_x} \|u_k\|_{L^4_x} \\
    \leq& C \big( \|u_k\|^2_{\dot{H}^1_x} + \|u_k\|_{\dot{H}^1_x} + \| f \|_{L^{3/2}_x} \big) \| \xi \|_{H^1_x}.
    \end{split}
\]
Then integrating in time yields that $\{\partial_t u_k\}_{k \in \N}$ is uniformly bounded in $L_t^1 \mathbb{H}_{x,\text{loc}}^*(D)$.
\end{proof}

Next, we prove that certain measures in the limit are relatively compact, which yields a crucial requirement in our parabolic concentration-compactness framework. This is equivalent to tightness of measures. Recall that a collection of measures $\{\mu_i\}_{i \in \Lambda}$ on $\R^n$ is called \textit{tight} if for any $\epsilon>0$, there exists a compact set $K_\epsilon \subset \R^n$ such that $\mu_i(\R^n \backslash K_\epsilon) < \epsilon$ for any $i \in \Lambda$.

\begin{lemma} \label{evo_tightness}
Let the assumptions be as in \Cref{preparation_existence_nonsta}, then $\{|\nabla u_k|^2dxdt\}_{k \in \N}$, $\{|u_k|^2dxdt\}_{k \in \N}$ and $\{|u_k|^3dxdt\}_{k \in \N}$ are tight in the sense of measures.
\end{lemma}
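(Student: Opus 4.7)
The plan is to derive tightness of all three families simultaneously from the local energy inequality \eqref{evo_local_energy_seq} applied to a spatial cutoff that vanishes near the origin. Fix $\phi\in C^\infty(\R^4)$ with $0\le\phi\le 1$, $\phi\equiv 0$ on $\{|x|\le 1\}$ and $\phi\equiv 1$ on $\{|x|\ge 2\}$, and set $\phi_R(x):=\phi(x/R)$, so that $\|\nabla\phi_R\|_\infty\le C/R$ and $\|\Delta\phi_R\|_\infty\le C/R^2$. Since $\phi_R$ is bounded with bounded derivatives, \Cref{rmk_admissiblet} permits inserting $\phi=\phi_R$ into \eqref{evo_local_energy_seq}; taking the supremum over $t\in[0,T]$ on the left and absolute values on the right yields, simultaneously,
\[
  \sup_{t\in[0,T]}\int_{\{|x|>2R\}}|u_k(t)|^2\,dx \;+\; \int_0^T\!\!\int_{\{|x|>2R\}}|\nabla u_k|^2\,dxdt \;\le\; I_0+I_1+I_2+I_3+I_4,
\]
where $I_0:=\int|u_0|^2\phi_R\,dx$ and $I_1,\ldots,I_4$ are the absolute values of the four terms on the right-hand side of \eqref{evo_local_energy_seq} with $\phi=\phi_R$.

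The next step is to show that each $I_j\to 0$ uniformly in $k$ as $R\to\infty$. One has $I_0\to 0$ because $u_0\in L^2(\R^4)$. The three terms $I_1,I_2,I_3$ each carry a factor $|\Delta\phi_R|$ or $|\nabla\phi_R|$ and are controlled via the uniform bounds of \Cref{preparation_existence_nonsta} together with the critical embedding $L_t^\infty L_x^2\cap L_t^2 H_x^1\subset L^3_{t,x}$ in space dimension $n=4$: namely $I_1\le (CT/R^2)\sup_k\|u_k\|_{L_t^\infty L_x^2}^2$, $I_2\le(C/R)\sup_k\|u_k\|_{L^3}^2\|\chi_k*u_k\|_{L^3}\le(C/R)\sup_k\|u_k\|_{L^3}^3$, and $I_3\le(C/R)\sup_k\|p_k\|_{L^{3/2}}\|u_k\|_{L^3}$. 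Finally $I_4\le\|f\mathbf{1}_{\{|x|>R\}}\|_{L^{3/2}(D)}\sup_k\|u_k\|_{L^3}\to 0$ by dominated convergence since $f\in L^{3/2}(D)$. This already yields tightness of $\{|\nabla u_k|^2dxdt\}$ (directly from the second LHS term) and of $\{|u_k|^2dxdt\}$ (since $\int_0^T\int_{|x|>2R}|u_k|^2\le T\sup_t\int_{|x|>2R}|u_k|^2$).

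For the critical quantity $\{|u_k|^3dxdt\}$ I would use the Gagliardo--Nirenberg inequality $\|v\|_{L^3_x}^3\le C\|v\|_{L^2_x}\|\nabla v\|_{L^2_x}^2$ on $\R^4$ with $v=\phi_R u_k$ and integrate in time, obtaining
\[
  \int_{\{|x|>2R\}\times[0,T]}\!\!|u_k|^3\,dxdt \;\le\; \int_0^T\!\!\int_{\R^4}|\phi_R u_k|^3\,dxdt \;\le\; C\Big(\sup_{t}\|\phi_R u_k(t)\|_{L^2_x}\Big)\,\|\nabla(\phi_R u_k)\|_{L^2_{t,x}}^2.
\]
By the previous paragraph the first factor is $o_{R\to\infty}(1)$ uniformly in $k$, while the second is dominated by $2\|\phi_R\nabla u_k\|_{L^2_{t,x}}^2+2\|u_k\nabla\phi_R\|_{L^2_{t,x}}^2$, both of which are $o(1)$ uniformly in $k$. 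This delivers the remaining tightness statement.

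The main obstacle is the pressure term $I_3$: since $\{p_k\}$ is only controlled in $L^{3/2}(D)$, the only way to close the estimate is to pair $p_k$ against $u_k\cdot\nabla\phi_R$ in the dual exponent $L^3_{t,x}$, which forces a use of the critical Sobolev-type embedding $L_t^\infty L_x^2\cap L_t^2 H_x^1\hookrightarrow L^3_{t,x}$ in dimension $n=4$. This is precisely the borderline exponent at which compactness of the nonlinear term is known to fail, so one must be careful that the cutoff argument relies only on the uniform norms provided by \Cref{preparation_existence_nonsta} and not on any compactness that has yet to be established.
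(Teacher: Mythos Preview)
Your proposal is correct and follows essentially the same route as the paper: the same spatial cutoff outside a large ball is inserted into the local energy inequality \eqref{evo_local_energy_seq}, the right-hand side is bounded term by term using the uniform $L^3$, $L^{3/2}$ and $L_t^\infty L_x^2$ bounds from \Cref{preparation_existence_nonsta}, and the $L^3$ tightness is then deduced from the Gagliardo--Nirenberg/Sobolev inequality applied to $\phi_R u_k$. The only cosmetic difference is that the paper bounds the factor $\sup_t\|\phi_R u_k(t)\|_{L^2_x}$ merely by a uniform constant and lets the gradient factor carry the smallness, whereas you observe (also correctly) that this factor itself is $o_{R\to\infty}(1)$.
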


\begin{proof}
We define a cut-off function $\xi \in C^{\infty}(\R^4)$ with bounded derivatives by
\[ 0 \leq \xi \leq 1, \quad \xi|_{B_{\rho}} = 0, 
    \quad \xi|_{\R^4 \backslash B_{2\rho}} = 1, \quad |\nabla \xi| \leq C\rho^{-1}, \quad |\nabla^2 \xi| \leq C\rho^{-2}. \]
and let $\rho > 0$ to be determined. From \Cref{rmk_admissiblet}, we know $u_k \xi$ is an admissible test function. Testing the regularized Navier-Stokes equations \eqref{reg_navierstokes} with $u_k \xi$ yields
\[
  \begin{split}
  \sup_t \int_{\R^4} |u_k(t)|^2 \xi dx - \int_{\R^4} |u_0|^2 \xi dx &+ \iint_D |\nabla u_k|^2 \xi dxdt 
  \leq \iint_D |u_k|^2 |\Delta \xi| dxdt \\ + \iint_D |u_k|^2 |\tilde{u}_k| |\nabla \xi| dxdt
      &+ \iint_D 2 |p_k u_k| |\nabla \xi| dxdt + \iint_D |f \cdot u_k| \xi dxdt.
  \end{split}
\]
The bounds for $\xi$ gives
\[
  \begin{split}
  \sup_t \int_{\R^4 \backslash B_{2\rho}} |u_k(t)|^2 dx 
      &- \int_{\R^4 \backslash B_{\rho}} |u_0|^2 dx 
      + \iint_{D^c_{2\rho}} |\nabla u_k|^2 dxdt \\
    \leq& C \rho^{-2} \iint_{D_{\rho,2\rho}} |u_k|^2 dxdt
      + C \rho^{-1} \iint_{D_{\rho,2\rho}} |u_k|^2 |\tilde{u}_k| dxdt \\
    &+ 2 C \rho^{-1} \iint_{D_{\rho,2\rho}} |p_ku_k| dxdt + \iint_{D_{\rho,2\rho}} |f| |u_k| dxdt \\
    \leq& C \rho^{-2/3}T^{1/3} + C \|f\|_{L^{3/2}(D_{\rho,2\rho})},
  \end{split}
\]
where $D^c_{2\rho}:=(\R^4 \backslash B_{2\rho}) \times [0,T]$, and $D_{\rho,2\rho}:=(B_{2\rho} \backslash B_{\rho}) \times [0,T]$. The second inequality follows from H{\"o}lder inequality and the fact that $\{u_k\}_{k \in \N}$ is uniformly bounded in $L^3(D)$. Finally, letting $\rho$ be arbitrarily large concludes the tightness of $\{|\nabla u_k|^2 dxdt\}_{k \in \N}$. This also implies that $\{|u_k(t)|^2dx\}_{k \in \N}$ is tight uniformly in $t$, which leads to the tightness of $\{|u_k|^2dxdt\}_{k \in \N}$.
\par
For the tightness of the measures $\{|u_k|^3dxdt\}_{k \in \N}$, we use Sobolev inequality and the same cutoff function,
\[
  \begin{split}
  \iint_D |u_k \xi|^3 dxdt & \leq \|u_k \xi\|_{L_t^{\infty}L_x^2} \iint_D |\nabla(u_k \xi)|^2 dxdt \\
  & \leq 2 \|u_k \xi\|_{L_t^{\infty}L_x^2} \Big( \iint_D |\nabla u_k|^2 |\xi|^2 dxdt + \iint_D |u_k|^2 |\nabla \xi|^2 dxdt \Big) \\
  & \leq 2 \|u_k \xi\|_{L_t^{\infty}L_x^2} \Big[ \iint_D |\nabla u_k|^2 |\xi|^2 dxdt + C \rho^{-2/3}T^{1/3} \Big(\iint_D |u_k|^3 dxdt\Big)^{2/3} \Big].
  \end{split}
\]
Given the tightness of $\{|\nabla u_k|^2 dxdt\}_{k \in \N}$ and uniform boundedness of $u_k$ in the natural energy space, arbitrarily large $\rho$ yields the tightness of $\{|u_k|^3dxdt\}_{k \in \N}$.
\end{proof}

With the tightness of the measures, we obtain convergence of $\{u_k\}_{k \in \N}$ in $L^2(D)$.
\begin{lemma} \label{strong_conv}
Let the assumptions be as in \Cref{preparation_existence_nonsta}. The sequence $\{u_k\}_{k \in \N}$ is relatively compact in $L^2(D)$. Consequently, the weak limit $(u,p)$ are distributional solutions of the Navier-Stokes equations \eqref{navierstokes}.
\end{lemma}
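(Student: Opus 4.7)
The plan is to combine a local Aubin-Lions type compactness argument with the tightness of the measures $|u_k|^2\,dxdt$ from \Cref{evo_tightness} to obtain strong $L^2(D)$ compactness of $\{u_k\}$, and then to pass to the limit in the weak formulation of \eqref{reg_navierstokes} using the additional uniform $L^3$ bound.

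For each fixed $R > 0$, \Cref{preparation_existence_nonsta} gives that $\{u_k\}$ is bounded in $L^2_tH^1_x(B_R \times [0,T])$ and $\{\partial_t u_k\}$ is bounded in $L^1_t\mathbb{H}^*_{x,\text{loc}}(D)$. Since $H^1(B_R)$ embeds compactly into $L^2(B_R)$, which in turn embeds continuously into $\mathbb{H}^*(B_R)$, the Aubin-Lions-Simon compactness lemma yields a subsequence converging strongly in $L^2(B_R \times [0,T])$. Exhausting $\R^4$ by balls $B_{R_j}$ and extracting diagonally, I obtain a single subsequence along which $u_k \to u$ strongly in $L^2(B_R \times [0,T])$ for every $R > 0$.

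To upgrade this to global $L^2(D)$ convergence, I use tightness: given $\epsilon > 0$, \Cref{evo_tightness} provides $R_\epsilon$ with $\iint_{(\R^4 \setminus B_{R_\epsilon}) \times [0,T]} |u_k|^2\,dxdt < \epsilon$ for every $k$, and Fatou's lemma extends the bound to $u$. Combined with local strong convergence on $B_{R_\epsilon} \times [0,T]$, this gives $\limsup_k \|u_k - u\|^2_{L^2(D)} \leq 4\epsilon$, so $u_k \to u$ in $L^2(D)$ along the chosen subsequence.

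Finally, to pass to the limit in \eqref{reg_navierstokes}, the linear terms are handled by the weak convergences \eqref{nonstationary_convergence}. For the convective term, after integration by parts it reads $-\int u_k^i (\chi_k * u_k)^j \partial_j \varphi_i\,dxdt$, and the proof of \Cref{evo_tightness} shows that $\{u_k\}$ is uniformly bounded in $L^3(D)$; interpolating this bound with the $L^2$ convergence gives $u_k \to u$ in $L^q(D)$ for every $q \in [2,3)$, and the same passes to $\chi_k * u_k$ by standard mollifier estimates. H\"older's inequality then yields $(\chi_k * u_k) \otimes u_k \to u \otimes u$ in $L^r(D)$ for some $r > 1$, which suffices to pass to the limit against $\nabla\varphi$. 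The principal obstacle is the unboundedness of $\R^4$: Aubin-Lions alone delivers only local compactness, and without the tightness statement of \Cref{evo_tightness} mass could in principle escape to spatial infinity; once that tool is available, nothing else in the argument is delicate.
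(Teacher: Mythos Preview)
Your argument is correct and matches the paper's proof essentially step for step: local Aubin--Lions--Simon compactness on balls $B_R$ (with $X=H^1(B_R)$, $B=L^2(B_R)$, $Y=\mathbb{H}^*(B_R)$), a diagonal extraction, and then the tightness of $\{|u_k|^2\,dxdt\}$ from \Cref{evo_tightness} to upgrade local to global $L^2(D)$ convergence. The paper simply asserts that passing to the limit in the weak formulation is then ``easy to verify,'' whereas you spell out the convective term via interpolation between $L^2$ and $L^3$; this added detail is fine and changes nothing substantive.
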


\Cref{strong_conv} is a direct consequence of the bounds in \Cref{preparation_existence_nonsta} and the following compactness result.
\begin{lemma}[Corollary 6, Simon, \cite{simon1986compact}] \label{AubinLions}
Let $X,Y,B$ be Banach spaces and assume that we have the embeddings
\[ X \xhookrightarrow{\text{compact}} B \hookrightarrow Y. \]
If a sequence $\{u_k\}_{k \in \N}$ is bounded in $L^\alpha(0,T,B) \bigcap L^1_{\text{loc}}(0,T,X), \alpha \in (1, +\infty]$ and $\partial_t u_n$ is bounded in $L^1_{\text{loc}}(0,T,Y)$, then there exists a subsequence of $\{u_k\}_{k \in \N}$ which converges strongly in $L^\beta(0,T,B)$ for any $\beta \in [1,\alpha)$.
\end{lemma}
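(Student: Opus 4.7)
The plan is to reduce the statement to a Kolmogorov--Riesz--type characterization of compactness in the vector-valued space $L^\beta(0,T;B)$: a bounded family $\mathcal{F} \subset L^\beta(0,T;B)$ is relatively compact if and only if, on every compact subinterval $[a,b] \subset (0,T)$, (i) the time integrals $\{\int_{t_1}^{t_2} u(t)\,dt : u \in \mathcal{F}\}$ are relatively compact in $B$ for all $a \leq t_1 < t_2 \leq b$, and (ii) the time shifts satisfy $\|u(\cdot+h) - u(\cdot)\|_{L^\beta(a,b-h;B)} \to 0$ as $h \to 0^+$ uniformly in $u \in \mathcal{F}$. To pass from a compact subinterval to the whole $(0,T)$, I would exhaust $(0,T)$ by $[\delta, T-\delta]$, diagonalize to extract a subsequence convergent on each such subinterval, and control the endpoint tails via H\"older: $\|u_k\|_{L^\beta(0,\delta;B)} \le \delta^{1/\beta - 1/\alpha}\|u_k\|_{L^\alpha(0,T;B)} \to 0$ as $\delta \to 0$, using $\beta < \alpha$ and the uniform $L^\alpha(B)$ bound.

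Condition (i) is immediate from the $L^1_{\text{loc}}(0,T;X)$ hypothesis: $\|\int_{t_1}^{t_2} u_k\,dt\|_X \le \|u_k\|_{L^1(a,b;X)}$ is bounded uniformly in $k$, so the compactness of the embedding $X \hookrightarrow B$ makes these integrals relatively compact in $B$. Condition (ii) is the core estimate. I would invoke Ehrling's compact interpolation inequality, a direct consequence of the chain $X \xhookrightarrow{\text{compact}} B \hookrightarrow Y$: for every $\eta > 0$ there exists $C_\eta$ such that $\|v\|_B \le \eta \|v\|_X + C_\eta \|v\|_Y$ for all $v \in X$. Applied to $v(t) := u_k(t+h) - u_k(t)$, which lies in $X$ for a.e.\ $t$, and integrated in $t$ over $[a,b-h]$, this yields
\[
\int_a^{b-h} \|u_k(t+h)-u_k(t)\|_B\,dt \;\le\; 2\eta\,\|u_k\|_{L^1(a,b;X)} \,+\, C_\eta \int_a^{b-h} \|u_k(t+h)-u_k(t)\|_Y\,dt,
\]
and the $Y$-translation is controlled via the fundamental theorem of calculus, Fubini, and the bound on $\partial_t u_k$: the final integral is at most $h\,\|\partial_t u_k\|_{L^1(a,b;Y)}$. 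Sending $h \to 0^+$ kills the second term, then $\eta \to 0$ kills the first, delivering uniform equicontinuity in $L^1(a,b-h;B)$. To upgrade from $L^1$ to $L^\beta$, I would interpolate $\|w\|_{L^\beta} \le \|w\|_{L^1}^{\theta}\|w\|_{L^\alpha}^{1-\theta}$ with $\theta \in (0,1]$ determined by $1/\beta = \theta + (1-\theta)/\alpha$, and combine with the uniform $L^\alpha(0,T;B)$ bound.

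The main technical obstacle is Ehrling's inequality itself, proved by a standard contradiction argument: a normalized sequence violating the bound would have a $B$-convergent subsequence by compactness of $X \hookrightarrow B$, contradicting the continuity $B \hookrightarrow Y$. A secondary subtlety is that the hypotheses are local in time, so every estimate above must be performed on compact $[a,b] \subset (0,T)$ and reassembled through the tail bound from the first paragraph; once this bookkeeping is carried out, the Kolmogorov--Riesz criterion applied on each subinterval, together with a diagonal extraction, produces the desired subsequence converging strongly in $L^\beta(0,T;B)$ for every $\beta \in [1,\alpha)$.
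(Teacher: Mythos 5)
The paper does not prove this lemma; it is imported verbatim as Corollary~6 of Simon's compactness paper, so there is no internal proof to compare against. Your argument is a correct reconstruction of Simon's own proof: the vector-valued Kolmogorov--Riesz criterion (Simon's Theorem~1), Ehrling's interpolation inequality from the chain $X \xhookrightarrow{\text{compact}} B \hookrightarrow Y$, the translation bound $\int_a^{b-h}\|u_k(t+h)-u_k(t)\|_Y\,dt \le h\|\partial_t u_k\|_{L^1(a,b;Y)}$, the $L^1\to L^\beta$ upgrade by interpolation against the uniform $L^\alpha(B)$ bound (which is exactly where $\beta<\alpha$ is used), and the H\"older tail estimate to pass from compact subintervals to $(0,T)$. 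The only ingredient you invoke without proof is the Kolmogorov--Riesz characterization itself; for a fully self-contained argument you would either cite Simon's Theorem~1 or prove it, but as a proof at the level of detail the paper itself operates at, your proposal is sound and matches the cited source's method.
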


\begin{proof}[Proof of \Cref{strong_conv}]
Note that, to get a compact Sobolev embedding, we first restrict to $B_l \subset \R^4$ with $l \in \N^*$, then letting $\alpha = +\infty$ and
\[ X = H^1(B_l), \quad B = L^2(B_l), \quad Y = \mathbb{H}^*(B_l) \]
give the strong convergence of a subsequence of $\{u_k\}_{k \in \N}$ in $L^2(B_l\times [0,T])$. By enlarging $r$ to infinity, a diagonal argument gives a subsequence which converges in $L^2(\Omega \times [0,T])$ for any compact subset $\Omega$ of $\R^4$. Note that \Cref{evo_tightness} yields the tightness of $\{|u_k|^2dxdt\}_{k \in \N}$, so it is easy to show the subsequence converges in $L^2(D)$. \par
With the strong convergence of $u_k$ in $L^2(D)$ and the weak convergence criteria in \eqref{nonstationary_convergence}, it is easy to verify the weak limit $(u,p)$ solves $\eqref{navierstokes}$ in the distributional sense.
\end{proof}

\subsection{Parabolic concentration-compactness}

To obtain local energy inequalities for the weak limit $(u,p)$, one would like to pass to the limit $k \rightarrow \infty$ in the local energy inequalities \eqref{evo_local_energy_seq} for the approximation solutions. As we discussed in the introduction, this scenario is critical. In critical variational problems, concentration phenomena may occur. This motivates to look for an analogue of Lions's \cite{lions1985concentration} concentration-compactness principle in parabolic setting. \par
Note that concentration-compactness in elliptic setting may not be applicable to the parabolic setting, since it is hopeless to get $\{\nabla u_k(t)\}_{k \in \N}$ is bounded in $L^2$ for almost every $t$, even for a subsequence. A relevant example in \cite{lopes2001pointwise} by Lopes Filho and Nussenzveig Lopes shows that a bounded sequence in $L^1$ might blow up at almost every point up to any subsequence.

\begin{lemma} \label{parabolic_concentration_compactness}
Given a bounded sequence $\{u_k\}_{k \in \N} \subset L_t^{\infty} L_x^2 \bigcap L_t^2 H_x^1(D)$, let $u$ be given by the limit in \eqref{nonstationary_convergence}. Suppose $u_k$ converges to $u$ in $L^1_{\text{loc}}(D)$. Assume that $\mu_k = |\nabla u_k|^2 dxdt \rightarrow \mu$, $\nu_k = |u_k|^3 dxdt \rightarrow \nu$ weakly in the sense of measures, where $\mu$ and $\nu$ are bounded nonnegative measures on $\R^4 \times [0,T]$. Then there exist nonnegative finite measures $\omega$ and $\lambda$ on $\R^4 \times [0,T]$, such that for any $\varphi \in C_c^{\infty}(D)$,
\begin{align}
  \iint \varphi d\mu &= \iint \varphi |\nabla u|^2 dxdt + \iint \varphi d\lambda, \label{parabolic_concentration_compactness_eq1} \\
  \iint \varphi d\nu &= \iint \varphi |u|^3 dxdt + \iint \varphi d\omega. \label{parabolic_concentration_compactness_eq2}
\end{align}
Moreover, $\omega \ll \lambda$, and we have for any open subdomain $Q$ of $D$,
\begin{equation} \label{parabolic_concentration_compactness_eq0}
  \iint_Q d\omega \leq C \liminf_{k \rightarrow \infty} \|u_k-u\|_{L_t^{\infty}L_x^2(Q)} \iint_Q d\lambda.
\end{equation}
In particular, the Radon–Nikodym derivative satisfies
\begin{equation}
  \frac{d\omega}{d\lambda} 
  \leq C \lim_{r \rightarrow 0} \liminf_{k \rightarrow \infty} \|u_k-u\|_{L_t^{\infty}L_x^2(Q^*_r(x_0,t_0))},
\end{equation}
where $Q^*_r(x_0,t_0) := B_r(x_0) \times (t_0-\frac{r^2}{2}, t_0+\frac{r^2}{2})$.
\end{lemma}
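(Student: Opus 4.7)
The strategy is to define $\lambda$ and $\omega$ as the ``defect'' parts of $\mu$ and $\nu$, and then to relate them via a Gagliardo--Nirenberg interpolation in $\R^4$. First I would define $\lambda:=\mu-|\nabla u|^2\,dxdt$. For any $0\le\phi\in C_c^\infty(D)$, the weak convergence $\nabla u_k\rightharpoonup\nabla u$ in $L^2(D)$ together with lower semicontinuity of the convex functional $v\mapsto\int\phi|v|^2$ gives $\int\phi|\nabla u|^2\le\liminf_k\int\phi|\nabla u_k|^2=\int\phi\,d\mu$, so $\lambda\ge 0$ and \eqref{parabolic_concentration_compactness_eq1} holds. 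For $\omega$, pass to a subsequence with $u_k\to u$ almost everywhere on $D$, which is possible from the $L^1_{\text{loc}}$-convergence. Together with uniform boundedness of $\{u_k\}$ in $L^3(D)$, the Brezis--Lieb lemma yields $\int_D\big||u_k|^3-|u_k-u|^3-|u|^3\big|\,dxdt\to 0$. Testing against $\varphi\in C_c^\infty(D)$ shows that $|u_k-u|^3\,dxdt$ converges weakly in the sense of measures to $\omega:=\nu-|u|^3\,dxdt\ge 0$, establishing \eqref{parabolic_concentration_compactness_eq2}.

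Second, for the quantitative comparison I would use the Gagliardo--Nirenberg inequality in $\R^4$, namely $\|v\|_{L^3(\R^4)}^3\le C\,\|v\|_{L^2(\R^4)}\,\|\nabla v\|_{L^2(\R^4)}^2$ (interpolation exponent $\theta=2/3$). Integrating in time yields
\begin{equation*}
\|v\|_{L^3_{t,x}}^3\le C\,\|v\|_{L^\infty_t L^2_x}\,\|\nabla v\|_{L^2_{t,x}}^2,
\end{equation*}
which applied to $v=\phi(u_k-u)$ for $0\le\phi\in C_c^\infty(D)$ and expanding $\nabla(\phi v)$ gives
\begin{equation*}
\int\phi^3|u_k-u|^3\,dxdt\le C\|u_k-u\|_{L^\infty_t L^2_x(\supp\phi)}\Big(\int\phi^2|\nabla(u_k-u)|^2\,dxdt+\|\nabla\phi\|_\infty^2\int_{\supp\phi}|u_k-u|^2\,dxdt\Big).
\end{equation*}
As $k\to\infty$: the LHS tends to $\int\phi^3\,d\omega$; the error term with $|u_k-u|^2$ vanishes because $L^1_{\text{loc}}$-convergence combined with uniform $L^3_{t,x}$-bounds yields $L^2_{\text{loc}}$-convergence by interpolation; and expanding $|\nabla(u_k-u)|^2$ and invoking $\phi^2\nabla u\in L^2(D)$ for the cross term shows that the remaining integral tends to $\int\phi^2\,d\lambda$. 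Hence
\begin{equation*}
\int\phi^3\,d\omega\le C\,\liminf_k\|u_k-u\|_{L^\infty_t L^2_x(\supp\phi)}\int\phi^2\,d\lambda.
\end{equation*}

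Third, given an open $Q\subset D$, choose $\phi_n\in C_c^\infty(Q)$ with $0\le\phi_n\uparrow\chi_Q$; monotonicity of the $L^\infty_tL^2_x$-norm in the spacetime domain combined with monotone convergence of $\phi_n^3\uparrow\chi_Q$ against $\omega$ and $\phi_n^2\uparrow\chi_Q$ against $\lambda$ promotes the previous estimate to \eqref{parabolic_concentration_compactness_eq0}; in particular $\omega\ll\lambda$. Applying \eqref{parabolic_concentration_compactness_eq0} to the cylinders $Q_r^*(x_0,t_0)$ and using standard Lebesgue differentiation of $\omega$ with respect to $\lambda$ yields the stated Radon--Nikodym bound; the limit in $r$ exists because the factor $\liminf_k\|u_k-u\|_{L^\infty_t L^2_x(Q_r^*(x_0,t_0))}$ is monotone nondecreasing in $r$. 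The main obstacle is ensuring that Brezis--Lieb genuinely upgrades to a weak-measure splitting rather than a mere integral identity; this is why extracting an a.e.\ convergent subsequence, together with local uniform $L^3_{t,x}$-boundedness, is essential at the very first step.
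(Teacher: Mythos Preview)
Your argument is correct and follows essentially the same route as the paper: define the defect measures $\lambda=\mu-|\nabla u|^2dxdt$ and $\omega=\nu-|u|^3dxdt$, then relate them via the interpolation $\|v\|_{L^3(\R^4)}^3\le C\|v\|_{L^2}\|\nabla v\|_{L^2}^2$ applied to $\phi(u_k-u)$ at each time, integrate in $t$, and pass to the limit. The only cosmetic difference is that you invoke the Brezis--Lieb lemma to obtain $|u_k-u|^3dxdt\rightharpoonup\omega$, whereas the paper phrases the same step as ``the interaction terms vanish because $u_k\to u$ in $L^\alpha_{\mathrm{loc}}$ for $\alpha<3$''; your formulation is arguably cleaner.
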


\begin{remark}
We remark that this lemma only requires $\{u_k\}_{k \in \N}$ to be bounded in $L_t^{\infty} L_x^2 \bigcap L_t^2 H_x^1(D)$. $\{u_k\}_{k \in \N}$ does not necessarily solve certain equations. Indeed, we only need $u_k \rightarrow u$ converges in $L^1_{\text{loc}}(D)$.
\end{remark}

\begin{remark}
Although we only state this result for space dimension $4$, one can easily see a trivial generalization to higher dimensions.
\end{remark}

\begin{proof}
Let $v_k = u_k - u \in L_t^{\infty} L_x^2 \bigcap L_t^2 H_x^1$, then
\begin{align}
  v_k \rightarrow 0 \quad &\text{strongly in } L^2_{t,x}, \text{ locally in space}, \\
  v_k \rightarrow 0 \quad &\text{weakly in } L_t^2 \dot{H}_x^1, \\
  v_k \rightarrow 0 \quad &\text{weakly}-* \text{ in } L_t^{\infty} L_x^2.
\end{align}
\par
Define $\omega_k := |v_k|^3 dxdt$. It is easy to check $\{\omega_k\}_{k \in \N}$ is tight. Indeed, for any compact subset $\Omega \subset D$, denote $\Omega^c := (\R^4 \backslash \Omega) \times [0,T]$, then
\[ \|v_k\|_{L^3(\Omega^c)} \leq \|u_k\|_{L^3(\Omega^c)} + \|u\|_{L^3(\Omega^c)}. \]
Because of the weak convergence of $\{\nu_k\}_{k \in \N}$, we know that $\{\nu_k\}_{k \in \N}$ is tight and thus $\|v_k\|_{L^3(\Omega^c)}$ is arbitrarily small given $\Omega$ large enough. Thus we can extract a weakly convergent subsequence with a limit denoted by $\omega$. For any $\varphi \in C_c^{\infty}(D)$, we have
\[
  \begin{split}
  \iint \varphi d\nu &= \lim_{k \rightarrow \infty} \iint \varphi d\nu_k = \lim_{k \rightarrow \infty} \iint \varphi |u_k|^3 dxdt \\
  &= \iint \varphi |u|^3dxdt + \lim_{k \rightarrow \infty} \iint \varphi |v_k|^3dxdt = \iint \varphi |u|^3dxdt + \iint \varphi d \omega.
  \end{split}
\]
The third equality follows from the fact that $u_k \rightarrow u$ in $L^\alpha$ locally in space for $\alpha \in [1,3)$, then all the interaction terms vanish. 
Let $\lambda_k := |\nabla v_k|^2 dxdt \rightarrow \lambda$ weakly in the sense of measures. A similar argument verifies \eqref{parabolic_concentration_compactness_eq2}, and the interaction term vanishes there since $u_k \rightarrow u$ weakly in $L_t^2 \dot{H}_x^1(D)$.
\par
Now we prove \eqref{parabolic_concentration_compactness_eq0}. For any $\varphi \in C_c^{\infty} (D)$, we have
\begin{equation}
  \begin{split}
  \iint_D |\varphi|^3 d\omega &= \lim_{k \rightarrow \infty} \iint_D |\varphi|^3 d\omega_k
    = \lim_{k \rightarrow \infty} \iint_D |v_k\varphi|^3 dxdt \\
    &\leq \liminf_{k \rightarrow \infty} \sup_{0<t<T} \|v_k \varphi\|_{L_x^2} \int_0^T \|v_k \varphi\|^2_{L_x^4} dt \\
    &\leq C \liminf_{k \rightarrow \infty} \sup_{0<t<T} \|v_k \varphi\|_{L_x^2} \iint_D |\nabla (v_k \varphi)|^2 dxdt \\
    &\leq C \liminf_{k \rightarrow \infty} \sup_{0<t<T} \|v_k \varphi\|_{L_x^2} \iint_D |\varphi|^2 |\nabla v_k|^2 dxdt \\
    &\leq C \liminf_{k \rightarrow \infty} \sup_{0<t<T} \|v_k \varphi\|_{L_x^2} \iint_D |\varphi|^2 d\lambda.
  \end{split}
\end{equation}
The first inequality follows from the interpolation between $L^2$ and $L^4$. The second inequality follows from Sobolev embedding. For the third inequality, note that the terms converge to zero when at least one derivative hits $\varphi$. Using smooth functions to approximate the indicator function of $Q$ yields the inequality \eqref{parabolic_concentration_compactness_eq0}.
\par
Therefore, $\omega$ is absolutely continuous with respect to $\lambda$, and by Radon–Nikodym theorem, we have
\[ \frac{d\omega}{d\lambda} \in L^1(D; \lambda) \]
with
\[ \frac{d\omega}{d\lambda} (x_0,t_0) \leq C \lim_{r \rightarrow 0} \liminf_{k \rightarrow \infty} \|v_k\|_{L_t^{\infty}L_x^2(Q^*_r(x_0,t_0))} \]
for any $(x_0,t_0) \in \R^4 \times (0,T)$. 
\end{proof}
\par

Using the parabolic concentration-compactness framework in \Cref{parabolic_concentration_compactness} and the tightness results in \Cref{evo_tightness}, we now can define the notion of weak solution sets involving concentration measures.

\begin{definition} \label{weaksolutionset_nonsta}
The quadruple $(u,p,\lambda,\omega)$ is a \textit{weak solution set} of the Navier-Stokes equations \eqref{navierstokes} if
\begin{enumerate}[leftmargin=*]
    \item $u$ and $p$ are obtained as weak limits of the weak solutions $\{(u_k,p_k)\}_{k \in \N}$ of the regularized Navier-Stokes equations $\eqref{reg_navierstokes}$,  as in \Cref{preparation_existence_nonsta}.
    \item $\lambda$ and $\omega$ are obtained as weak limits of the measures in \Cref{parabolic_concentration_compactness}.
\end{enumerate}
\end{definition}
\par

One can see that every weak solution set comes with a sequence of approximation solutions. However, this is in a sense necessary because a single $L^p$ function is not able to represent concentration of any form. As we shall see, this is effective for analytical purposes in certain critical cases.

\subsection{Local energy inequalities}

In this subsection, we show two energy inequalities with purely local nature for weak solution sets. Although these inequalities are weaker than the local energy inequality \eqref{localenergy} in a sense, they suffice to establish partial regularity of the distributional solutions $(u,p)$. For technical reasons only, in \eqref{evo_local_energy_1} and \eqref{evo_local_energy_2}, we present two distinct forms of these estimates. \par

From the elliptic equation \eqref{pressure_equ} for the pressure $p$, one may guess $p$ has the same regularity as $|u|^2$, so the pressure term in the local energy estimates \eqref{evo_local_energy_seq} may also exhibit concentration of mass. As a preparation for our main goal in this section, we show the concentration in $|up|dxdt$ is localizable and comparable to the concentration in $|u|^3dxdt$.

\begin{lemma} \label{pressure_convergence_nonsta}
Suppose $\{(u_k, p_k)\}_{k \in \N}$ are the solutions of the regularized equations \eqref{reg_navierstokes} and $(u,p,\lambda,\omega)$ is the corresponding weak solution set, then
\[
 \limsup_{k \rightarrow \infty} \iint_D \zeta \big| u_k(p_k-\gamma) - u(p-\gamma) \big| dxdt
    \lesssim \iint_D \zeta d\omega
\]
for any $\zeta \in C_c^{\infty}(D)$ and any $\gamma \in \R$ with $\zeta \geq 0$.
\end{lemma}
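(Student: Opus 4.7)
The plan is to split the pressure, via a spatial cutoff, into a Calder\'on--Zygmund local part and a harmonic far-field part, show that the far-field contribution vanishes in the limit, and bound the remaining local contribution by the defect measure $\omega$.

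Since $u_k \to u$ strongly in $L^2(D)$ by \Cref{strong_conv}, one has $\iint \zeta|\gamma||u_k - u|\,dxdt \to 0$, so I first reduce to the case $\gamma = 0$. Fix $\eta \in C_c^\infty(\R^4)$ with $\eta \equiv 1$ on a neighborhood of $\supp_x \zeta$ and $\supp \eta$ compact, and set pointwise in $t$
\[
p_k^{(1)} := R_iR_j(\eta\, \tilde u_k^i u_k^j), \qquad p_k^{(2)} := p_k - p_k^{(1)},
\]
with the analogous splitting $p = p^{(1)} + p^{(2)}$. Calder\'on--Zygmund theory gives $\|p_k^{(1)}\|_{L^{3/2}} \lesssim 1$, while $-\Delta p_k^{(2)} = \partial_i \partial_j((1-\eta)\tilde u_k^i u_k^j)$ has source supported in $\{\eta<1\}$, so $p_k^{(2)}$ is harmonic in $x$ on $\supp_x\zeta$. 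Harmonicity yields a representation $p_k^{(2)}(x,t) = \int K(x,y)(1-\eta(y))\tilde u_k^i u_k^j(y,t)\,dy$ with a smooth bounded kernel $K$; from strong $L^1_{t,x}$-convergence $\tilde u_k u_k \to u u$ (a consequence of $L^2$-strong convergence of $u_k, \tilde u_k$) together with the uniform $L^\infty_{t,x}$-bound on the representation (from $u_k \in L^\infty_t L^2_x$), interpolation gives $p_k^{(2)} \to p^{(2)}$ strongly in $L^q(\supp \zeta)$ for every finite $q$. Splitting $u_k p_k^{(2)} - u p^{(2)} = (u_k - u)p_k^{(2)} + u(p_k^{(2)} - p^{(2)})$ and applying H\"older (first term via $L^2$-strong convergence paired with the $L^\infty_x$ bound; second term via $\zeta u \in L^3$ paired with strong $L^{3/2}$-convergence) shows $\iint \zeta|u_k p_k^{(2)} - u p^{(2)}|\,dxdt \to 0$.

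For the local part, set $v_k := u_k - u$ and $w_k := \tilde u_k - u$, both $\to 0$ in $L^2(D)$; expand $\eta(\tilde u_k^i u_k^j - u^i u^j) = \eta(u^i v_k^j + w_k^i u^j + w_k^i v_k^j)$ and write $p_k^{(1)} - p^{(1)} = L_k + Q_k$ with $L_k := R_iR_j(\eta(u v_k + w_k u))$ linear and $Q_k := R_iR_j(\eta w_k v_k)$ bilinear in the vanishing quantities. A Vitali argument---exploiting $u \in L^3(D)$ and the $L^{3/2}$-uniform integrability provided by the $L^3$ bounds on $v_k, w_k$---gives $L_k \to 0$ strongly in $L^{3/2}$. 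Decomposing $u_k p_k^{(1)} - u p^{(1)} = u_k L_k + u_k Q_k + v_k p^{(1)}$, both $\iint \zeta|u_k L_k|$ and $\iint \zeta|v_k p^{(1)}|$ are $o(1)$ via H\"older $L^3 \cdot L^{3/2}$ and Vitali (applied to $p^{(1)} \in L^{3/2}$ uniformly integrable against $v_k \to 0$ in measure).

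The main obstacle is the residual term $\iint \zeta|u_k Q_k|\,dxdt$, which does not vanish and must be shown to be $\lesssim \iint \zeta\,d\omega$. I would use the self-adjointness of $R_iR_j$ to dualize, recasting the integral as a pairing of $R_iR_j(\zeta u_k)$ (bounded in $L^3$ by Calder\'on--Zygmund) against $\eta w_k v_k$ (bounded in $L^{3/2}$), and combine with \Cref{parabolic_concentration_compactness} applied to both $\{|v_k|^3\,dxdt\}$ and $\{|w_k|^3\,dxdt\}$ to identify their defect measures in terms of $\omega$---the mollifier identity $w_k - v_k = -(I - \chi_k *)u_k$ tying the defect of $|w_k|^3$ to that of $|v_k|^3$. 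Achieving the \emph{linear} (as opposed to sub-linear) bound in $\iint \zeta\,d\omega$ is the delicate point, and rests on the matched Navier--Stokes scaling under which $\iint |u|^3$ and $\iint |u||p|$ share the same parabolic dilation invariance, so that the $L^{3/2}$-defect of $p_k^{(1)}$ is coupled linearly to the $L^3$-defect $\omega$ of $u_k$.
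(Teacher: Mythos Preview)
Your overall architecture---local Calder\'on--Zygmund part plus harmonic far-field part, Vitali for the cross terms---matches the paper's, and your treatment of the far-field piece $p_k^{(2)}$ and of the linear part $L_k$ is fine. The gap is exactly where you flag it: the residual term $\iint_D \zeta\,|u_k Q_k|\,dxdt$, and your proposed remedy does not close.

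The problem is the choice of the auxiliary cutoff. With $\eta\equiv 1$ on a neighborhood of $\supp_x\zeta$, Calder\'on--Zygmund only gives $\|Q_k\|_{L^{3/2}}^{3/2}\lesssim\iint\eta^{3/2}|w_kv_k|^{3/2}\,dxdt$, whose $\limsup$ is controlled by $\iint_{\supp\eta}d\omega$; pairing against $\|\zeta u_k\|_{L^3}$ (or $\|\zeta v_k\|_{L^3}$) yields a bound localized by $\eta$, not by $\zeta$, and sublinear rather than linear in $\omega$. Dualizing via self-adjointness of $R_iR_j$ does not help: the statement carries absolute values, and in any case $R_iR_j(\zeta u_k)$ is not supported in $\supp\zeta$, so the nonlocality of the Riesz transforms destroys the localization you need. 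Invoking ``matched scaling'' explains why the desired bound is dimensionally consistent, but is not a proof.

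What the paper does instead is to take the cutoff \emph{inside} the Calder\'on--Zygmund operator to be $\xi=\zeta^{2/3}$ itself rather than an auxiliary $\eta$: one localizes $-\Delta(p_k\xi)=\partial_i\partial_j(\xi\tilde u_k^iu_k^j)+(\text{commutator terms in }\nabla\xi)$ and shows that the commutator pieces converge strongly to their limits---this is where the paper spends an anisotropic interpolation inequality to get $\|u_k-u\|_{L^3_tL^\beta_x}\to 0$ for some $\beta<3$, so that the Sobolev gain from the extra derivative on $\xi$ beats the defect. Calder\'on--Zygmund on the leading piece then gives directly
\[
\limsup_{k\to\infty}\|(p_k-p)\xi\|_{L^{3/2}}^{3/2}\ \lesssim\ \iint|\xi|^{3/2}\,d\omega\ =\ \iint\zeta\,d\omega.
\]
After peeling off cross terms by Vitali, the single surviving contribution is $\iint\zeta\,|u_k-u|\,|p_k-p|$ (a product of \emph{two} defect-type factors, unlike your $u_kQ_k$), and H\"older with the splitting $\zeta=\zeta^{1/3}\cdot\zeta^{2/3}$ yields exactly $(\iint\zeta\,d\omega)^{1/3}(\iint\zeta\,d\omega)^{2/3}=\iint\zeta\,d\omega$. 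The step that makes this legal is that $\zeta^{2/3}$ is Lipschitz whenever $\zeta\ge 0$ is smooth (a consequence of the Fefferman--Phong sum-of-squares lemma), so the commutator computation with $\xi=\zeta^{2/3}$ is admissible.
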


\begin{proof}
To prove this result, we need an interpolation inequality. For any $\alpha \in (3,+\infty), \beta \in (2,3), \vartheta \in (0,1)$ with $\frac{1}{\alpha}+\frac{2}{\beta}=1$ and $\frac{1}{\beta} = \frac{\vartheta}{2}+\frac{1-\vartheta}{3}$, we have for any $w \in L_t^{\infty} L_x^2 \cap L_t^2 H_x^1$, 
\begin{equation} \label{pressure_convergence_nonsta_eq1}
  \begin{split}
  \int \|w(t)\|^3_{ L_x^{\beta} } dt &= \int \|w(t)\|_{ L_x^{\beta} } \cdot \|w(t)\|^2_{ L_x^{\beta} } dt \\
    & \leq \|w\|_{ L_t^{\alpha} L_x^{\beta} } \|w\|^2_{ L_{t,x}^{\beta} } \\
    & \leq \|w\|_{ L_t^{\alpha} L_x^{\beta} } \|w\|^{2\vartheta}_{ L_{t,x}^{2} } \|w\|^{2(1-\vartheta)}_{ L_{t,x}^{3} } \\
    & = \|w\|^{2\vartheta}_{ L_{t,x}^{2} } \|w\|^{2(1-\vartheta)}_{ L_{t,x}^{3} } 
      \Big( \int \|w(t)\|_{L_x^{\beta}}^{\alpha} dt \Big)^{1/\alpha} \\
    & \leq \|w\|^{2\vartheta}_{ L_{t,x}^{2} } \|w\|^{2(1-\vartheta)}_{ L_{t,x}^{3} } 
      \Big( \int \|w(t)\|_{L_x^2}^{\alpha(4-\beta)/\beta} \|w(t)\|_{L_x^4}^{\alpha(2\beta-4)/\beta} dt \Big)^{1/\alpha} \\
    & \leq \|w\|^{2\vartheta}_{ L_{t,x}^{2} } \|w\|^{2(1-\vartheta)}_{ L_{t,x}^{3} } 
      \|w\|_{ L_t^{\infty} L_x^2 }^{(4-\beta)/\beta} \|\nabla w\|_{ L_{t,x}^2 }^{2/\alpha}.
  \end{split}
\end{equation}
The first inequality follows from H\"older inequality. The second and the third inequalities follow from Lebesgue interpolation inequality. The fourth ineuqality follows from the Sobolev inequality.
\par
Now we analyze the concentration phenomena of the measures involving the pressure $p_k$. Note the following Poisson equation
\[ - \Delta p_k = \partial_i \partial_j (\tilde{u}_k^iu_k^j), \]
where $\tilde{u}_k := \chi_k * u_k$. From \Cref{rmk_general_ns}, we know this equation holds in the sense of distributions for almost every $t$, then we localize this equation with an arbitrary Lipschitz function $\xi \in C^{0,1}(\R^4)$, i.e.
\[
  \begin{split}
  - \Delta (p_k \xi) =& \xi \partial_i \partial_j (\tilde{u}_k^i u_k^j) - \divr(p_k \nabla \xi) - \nabla p_k \cdot \nabla \xi \\
    =& \partial_i \partial_j (\xi \tilde{u}_k^i u_k^j) - \divr (\tilde{u}_k u_k^j \partial_j \xi) - \partial_j (\tilde{u}_k^i u_k^j) \partial_i \xi - \divr(p_k \nabla \xi) - \nabla p_k \cdot \nabla \xi \\
    =& \partial_i \partial_j (\xi \tilde{u}_k^i u_k^j) - \divr (\tilde{u}_k u_k^j \partial_j \xi + p_k \nabla \xi) - \partial_j (\tilde{u}_k^i u_k^j) \partial_i \xi - \nabla p_k \cdot \nabla \xi.
  \end{split}
\]
\par

Next, we decompose the pressure $p_k\xi=p_k^1+p_k^2+p_k^3$ with
\[
  \begin{split}
  - \Delta p^1_k &= \partial_i \partial_j (\xi \tilde{u}_k^i u_k^j), \\
  - \Delta p^2_k &= - \divr (\tilde{u}_k u_k^j \partial_j \xi + p_k \nabla \xi), \\
  - \Delta p^3_k &= - \partial_j (\tilde{u}_k^i u_k^j) \partial_i \xi - \nabla p_k \cdot \nabla \xi.
  \end{split}
\]
and $p\xi$ in a similar way. Intuitively, the concentration takes place in the component $p^1_k$, since at least one differentiation hits the cutoff function $\xi$ in other components. \par

Now we do rigorous estimates term by term. $p_k^1 \xi$ can be obtained by the Riesz transformation and Calderon-Zygmund theory yields
\begin{equation} \label{CZ_pressure1}
  \begin{split}
  \|p^1_k(t)-p^1(t)\|_{L_x^{3/2}} 
    \lesssim& \|\xi \tilde{u}_k^i(t) u_k^j(t)-\xi \tilde{u}^i(t) u^j(t)\|_{L_x^{3/2}} \\
    \lesssim& \big\|\xi \big(\tilde{u}_k^i(t) - \tilde{u}^i(t)\big) \big(u_k^j(t)- u^j(t)\big)\big\|_{L_x^{3/2}} \\
      &+ \big\|\xi \tilde{u}^i(t) \big(u_k^j(t)- u^j(t)\big)\big\|_{L_x^{3/2}} \\
      &+ \big\|\xi u^j(t) \big(\tilde{u}_k^i(t)- \tilde{u}^i(t)\big)\big\|_{L_x^{3/2}} \\
    \lesssim& \big\|\xi^{1/2} \big(u_k^j(t)- u^j(t)\big)\big\|^2_{L_x^3} \\
      &+ \big\|\xi \tilde{u}^i(t) \big(u_k^j(t)- u^j(t)\big)\big\|_{L_x^{3/2}} \\
      &+ \big\|\xi u^j(t) \big(\tilde{u}_k^i(t)- \tilde{u}^i(t)\big)\big\|_{L_x^{3/2}}. \\
  \end{split}
\end{equation}
Since $\omega_k \rightarrow \omega$ weakly, we have
\[
  \begin{split}
    \limsup_{k \rightarrow \infty} \iint_D |p^1_k-p^1|^{3/2} dxdt
    \lesssim& \limsup_{k \rightarrow \infty} \iint_D |\xi|^{3/2} |u_k-u|^3 dxdt \\
      &+ \limsup_{k \rightarrow \infty} \iint_D |\xi|^{3/2} |\tilde{u}^i(u_k^j- u^j)|^{3/2} dxdt \\
      &+ \limsup_{k \rightarrow \infty} \iint_D |\xi|^{3/2} |u^j(\tilde{u}_k^i- \tilde{u}^i)|^{3/2} dxdt \\
    \lesssim&  \iint_D |\xi|^{3/2} d\omega.
  \end{split}
\]
By Vitali's convergence theorem, the second and third lines converge to zero, because $|u_k^j- u^j|^{3/2}$ is uniformly integrable with respect to $|\xi|^{3/2} |\tilde{u}^i|^{3/2} dxdt$.
\par

Also, for almost every $t$, $u_k(t) \in L^3(\R^4)$, then $p_k^2 \xi$ can be obtained by convolution with singular kernels. Calderon-Zygmund theory yields
\[  
  \begin{split}
  \| p^2_k-p^2 \|_{L^{3/2}_{t,x}(D)} &= \big\|(- \Delta)^{-1} \big[ - \divr\big( (u_k u_k^j-u u^j) \partial_j \xi + (p_k-p) \nabla \xi\big) \big] \big\|_{L^{3/2}_{t,x}(D)} \\
    &\lesssim \big\| (u_k u_k^j-u u^j) \partial_j \xi + (p_k-p) \nabla \xi \big\|_{L^{3/2}_tL^{12/11}_x(D)} \\
    &\lesssim \|\nabla \xi\|_{L^{\infty}} \big(\| u_k u_k^j - uu^j \|_{L^{3/2}_tL^{12/11}_x(D)} + \| p_k-p \|_{L^{3/2}_tL^{12/11}_x(D)} \big) \\
    &\lesssim \|\nabla \xi\|_{L^{\infty}} \big(\| u_k - u \|_{L^3_tL^{24/11}_x(D)} + \| p_k-p \|_{L^{3/2}_tL^{12/11}_x(D)} \big).
  \end{split}
\]
\par
Similarily, for $p_k^3 \xi$ we have
\[  
  \begin{split}
  \| p^3_k-p^3 \|_{L^{3/2}_{t,x}(D)} &\lesssim \|\nabla \xi\|_{L^{\infty}} \big(\| u_k - u \|_{L^3_tL^{24/11}_x(D)} + \| p_k-p \|_{L^{3/2}_tL^{12/11}_x(D)} \big).
  \end{split}
\]

Let $w = u_k-u$ and $\beta = \frac{24}{11}$, the interpolation inequality \eqref{pressure_convergence_nonsta_eq1} yields
\begin{equation} \label{CZ_pressure_eq0} \limsup_{k \rightarrow \infty} \| u_k-u \|_{L_t^3L_x^{24/11}(D)} = 0 \end{equation}
and Calderon-Zygmund theory yields that for $l=1$ or $2$,
\begin{equation} \label{CZ_pressure_eq1} \limsup_{k \rightarrow \infty} \| p_k^l-p^l \|_{L_t^3L_x^{12/11}(D)} = 0. \end{equation}

Now we combine the estimates for $p_k^1$, $p_k^2$ and $p_k^3$. From \eqref{CZ_pressure_eq0} and \eqref{CZ_pressure_eq1}, we know that $p_k^2$ and $p_k^3$ have no contribution to the concentration, then
\begin{equation} \label{CZ_pressure_eq3}
  \begin{split}
  \limsup_{k \rightarrow \infty} \|(p_k-p) \xi\|^{3/2}_{L^{3/2}_{t,x}(D)}
     &\leq\limsup_{k \rightarrow \infty} \sum_{l=1}^3 \|p_k^l-p^l\|^{3/2}_{L^{3/2}_{t,x}(D)} \\
    &\leq \iint_{D_r} |\xi|^{3/2} d\omega.
  \end{split}
\end{equation}
Therefore, we can choose $\xi = \zeta^{2/3}$ and bound the concentration of the measure as follows,
\[ 
  \begin{split}
  \limsup_{k \rightarrow \infty} \iint_{D}& \zeta \big| u_k(p_k-\gamma) - u(p-\gamma) \big|dxdt \\
    \leq& \limsup_{k \rightarrow \infty} \iint_{D} \zeta |u_k||p_k - p| dxdt
      + \limsup_{k \rightarrow \infty} \iint_{D} \zeta |u_k- u||p-\gamma| dxdt \\
    \leq& \limsup_{k \rightarrow \infty} \iint_{D} \zeta |u_k-u||p_k - p| dxdt
      + \limsup_{k \rightarrow \infty} \iint_{D} \zeta |u||p_k - p| dxdt \\
    \lesssim& \iint_{D} \zeta d\omega.
  \end{split}
\]
Due to Vitali's convergence theorem, the second term in the second line and the second term in the third line converge to zero. Note that $\zeta$ is nonnegative and smooth. By \Cref{nonnegative_smooth}, $\xi$ is indeed a compactly supported Lipschitz continuous function. The last inequality follows from \eqref{CZ_pressure_eq3}.
\end{proof}

Now we can prove the following local energy inequalities.

\begin{proposition} \label{energy_convergence}
Let the assumptions be as in \Cref{preparation_existence_nonsta}, then the following local energy inequalities hold,
\begin{equation} \label{evo_local_energy_1}
  \begin{split}
  \limsup_{k \rightarrow \infty}& \sup_t \int_{\R^4} |u_k(t)|^2 \varphi(t) dx + \iint_D |\nabla u|^2 \varphi dxdt + \iint_D \varphi d\lambda \\
  \leq& \iint_D |u|^2 |\partial_t \varphi + \Delta \varphi| dxdt 
    + 2\sum_{i=1}^n \iint_D |u|^3 |\nabla \varphi_i| dxdt +  3 \sum_{i=1}^n \iint_D |\nabla \varphi_i| d\omega \\
    &+ 2\sum_{i=1}^n \iint_D |\nabla \varphi_i| |p-\gamma_i|^{3/2} dxdt + \iint_D f \cdot u \varphi dxdt,
  \end{split}
\end{equation}
\begin{equation} \label{evo_local_energy_2}
  \begin{split}
  \limsup_{k \rightarrow \infty} \sup_t \int_{\R^4} |u_k(t)|^2& \varphi(t) dx + \iint_D |\nabla u|^2 \varphi dxdt + \iint_D \varphi d\lambda \\
  \leq& \iint_D |u|^2 |\partial_t \varphi + \Delta \varphi| dxdt
    + \iint_D |u|^2 (u \cdot \nabla) \varphi dxdt \\
    &+ 2 \iint_D |u-\beta|^3 |\nabla \varphi| dxdt + 3 \iint_D |\nabla \varphi| d\omega \\
    &+ 2\iint_D p (u \cdot \nabla) \varphi dxdt + \iint_D f \cdot u \varphi dxdt,
  \end{split}
\end{equation}
for any $n \in \N$, any functions $\{\gamma_i\}_{1 \leq i \leq n} \subset L^{3/2}([0,T],\R)$ and $\beta \subset L^3([0,T],\R^4)$, any nonnegative cut-off functions $\varphi \in C_c^{\infty}(D)$ with $\varphi(\cdot,0)=0$ and $\{\varphi_i\}_{1 \leq i \leq n} \subset C_c^{\infty}(D)$ with $\varphi = \sum_{i=1}^n \varphi_i$.
\end{proposition}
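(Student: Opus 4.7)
The plan is to take $\limsup_{k\to\infty}$ of the approximating local energy inequality \eqref{evo_local_energy_seq} from \Cref{preparation_existence_nonsta}, tested with $\varphi$ (respectively, with each summand $\varphi_i$ of the decomposition $\varphi=\sum_i\varphi_i$). Since $\varphi(\cdot,0)=0$, the initial-data contribution drops out. On the LHS, the diffusion term $\iint|\nabla u_k|^2\varphi$ passes to the limit via \eqref{parabolic_concentration_compactness_eq1} and produces exactly $\iint|\nabla u|^2\varphi + \iint\varphi\,d\lambda$, while the time-slice term is retained inside $\limsup_k\sup_t$. The two ``linear'' RHS contributions are routine: $\iint|u_k|^2(\partial_t\varphi+\Delta\varphi)\to\iint|u|^2(\partial_t\varphi+\Delta\varphi)$ by the strong $L^2(D)$ convergence from \Cref{strong_conv}, and $\iint f\cdot u_k\,\varphi\to\iint f\cdot u\,\varphi$ by interpolating strong $L^2$ convergence with the uniform $L^3$ bound on $u_k$ paired against $f\varphi\in L^{3/2}$.

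The nonlinear term $\iint|u_k|^2\tilde u_k\cdot\nabla\varphi$ is where concentration must be tracked. First I would dispose of the mollification discrepancy $\iint|u_k|^2(\tilde u_k-u_k)\cdot\nabla\varphi$, which vanishes in the limit because $\tilde u_k-u_k\to 0$ in $L^3_{\mathrm{loc}}$ (standard mollifier bounds, using that $u_k$ is uniformly bounded in $L^2_tH^1_x$). For \eqref{evo_local_energy_1} I then take absolute values across $\varphi=\sum_i\varphi_i$, bound the integrand by $|u_k|^3|\nabla\varphi_i|$, and invoke \eqref{parabolic_concentration_compactness_eq2} with test function $|\nabla\varphi_i|$ to split the limit into $\iint|u|^3|\nabla\varphi_i|+\iint|\nabla\varphi_i|\,d\omega$. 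For \eqref{evo_local_energy_2} I keep the signed form and write $u_k=u+v_k$: the principal term gives $\iint|u|^2(u\cdot\nabla)\varphi$, the pure cubic remainder $\iint|v_k|^2v_k\cdot\nabla\varphi$ is controlled by $\iint|\nabla\varphi|\,d\omega$ via \eqref{parabolic_concentration_compactness_eq0}, and the mixed cross-terms involving both $u$ and $v_k$ are dominated, after the spatially constant shift $u\mapsto u-\beta(t)$ (permitted because $\divr u=0$ and $\varphi$ is compactly supported, so $\int\beta\cdot\nabla\varphi=0$), by $\iint|u-\beta|^3|\nabla\varphi|$ together with additional concentration mass.

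For the pressure term $2\iint p_k(u_k\cdot\nabla)\varphi$ the key observation is that $\divr u_k=0$ and compact support of each $\varphi_i$ yield $\iint\gamma_i(t)(u_k\cdot\nabla)\varphi_i=0$, so one may freely replace $p_k$ by $p_k-\gamma_i$. Then \Cref{pressure_convergence_nonsta} applied with $\zeta=|\nabla\varphi_i|$ gives $\limsup_k 2\iint(p_k-\gamma_i)u_k\cdot\nabla\varphi_i\leq 2\iint(p-\gamma_i)u\cdot\nabla\varphi_i+O(\iint|\nabla\varphi_i|\,d\omega)$. Applying Young's inequality $|u||p-\gamma_i|\leq\tfrac13|u|^3+\tfrac23|p-\gamma_i|^{3/2}$ on the pointwise absolute bound splits this into the $|u|^3|\nabla\varphi_i|$ and $|p-\gamma_i|^{3/2}|\nabla\varphi_i|$ contributions, and the concentration defects from the advection and pressure combine into $3\sum_i\iint|\nabla\varphi_i|\,d\omega$, yielding \eqref{evo_local_energy_1} with the stated coefficients. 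For \eqref{evo_local_energy_2} the signed form $2\iint p(u\cdot\nabla)\varphi$ is preserved by taking $\gamma=0$ in \Cref{pressure_convergence_nonsta}, and the same $3\iint|\nabla\varphi|\,d\omega$ bound absorbs the pressure concentration.

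The main obstacle is the bookkeeping of concentration mass: several nonlinear and pressure ingredients each contribute a concentration defect in the limit, and these must be collected with precisely the multiplicities claimed. The tools that make this tractable are the absolute continuity $\omega\ll\lambda$ from \Cref{parabolic_concentration_compactness}, which rules out spurious singular measures beyond $\omega$, and \Cref{pressure_convergence_nonsta}, which shows that the pressure concentration is dominated by the same $\omega$. Beyond this, one has to verify that every cross term pairing a weakly vanishing factor with a locally fixed one genuinely vanishes; this follows from the strong $L^2(D)$ convergence $u_k\to u$, uniform $L^3$-boundedness, Vitali's convergence theorem (as already used in \Cref{pressure_convergence_nonsta}), and the compact support of the test functions.
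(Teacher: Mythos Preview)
Your overall plan matches the paper's, and the handling of the linear terms and of the pressure via \Cref{pressure_convergence_nonsta} is essentially identical to what the paper does. There is, however, a genuine gap in your treatment of the mollified advection term.

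You assert that $\tilde u_k-u_k\to 0$ in $L^3_{\mathrm{loc}}$ from ``standard mollifier bounds, using that $u_k$ is uniformly bounded in $L^2_tH^1_x$''. The bound one actually gets from $u_k\in L^2_tH^1_x$ is only $\|\tilde u_k-u_k\|_{L^2_{t,x}}\le d_k\|\nabla u_k\|_{L^2_{t,x}}\to 0$. Upgrading this to $L^3_{t,x}$ would require control of $\int_0^T\|\nabla u_k(t)\|_{L^2_x}^3\,dt$, which is not available. More to the point, the whole difficulty of the paper is that $|u_k|^3\,dxdt$ may concentrate; writing $\tilde u_k-u_k=(\chi_k*v_k-v_k)+(\chi_k*u-u)$ with $v_k=u_k-u$, the second piece does vanish in $L^3$, but $\chi_k*v_k-v_k$ need not: if the concentration scale of $v_k$ is finer than the mollifier scale $d_k$, then $\chi_k*v_k-v_k$ carries essentially all of the $L^3$ mass of $v_k$. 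Since no rate is imposed on the mollifiers, you cannot rule this out, and hence $\iint|u_k|^2(\tilde u_k-u_k)\cdot\nabla\varphi$ is not known to vanish.

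The paper avoids this obstacle by a different maneuver: rather than comparing $\tilde u_k$ to $u_k$, it transfers the mollification onto the test function. Writing $\tilde u_k|\nabla\varphi|^{1/3}=h_1+h_2$ with $h_2=(u_k|\nabla\varphi|^{1/3})*\chi_k$ and $h_1$ the commutator, one gets $\|h_2\|_{L^3}\le\|u_k|\nabla\varphi|^{1/3}\|_{L^3}$ by Young's convolution inequality, while $\|h_1\|_{L^3}\lesssim d_k^{1/3}\|\varphi\|_{C^2}\|u_k\|_{L^3}\to 0$ from the $\tfrac13$-H\"older continuity of $|\cdot|^{1/3}$. This yields $\limsup_k\iint|\tilde u_k|^3|\nabla\varphi|\le\limsup_k\iint|u_k|^3|\nabla\varphi|$, and combined with $|u_k|^2|\tilde u_k|\le\tfrac23|u_k|^3+\tfrac13|\tilde u_k|^3$ one passes the cubic term to $\iint|u|^3|\nabla\varphi|+\iint|\nabla\varphi|\,d\omega$ directly. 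The same trick (with $u_k-\beta$ in place of $u_k$, using that $\beta$ is spatially constant so $\tilde u_k-\beta=(u_k-\beta)*\chi_k$) handles the mollification in the derivation of \eqref{evo_local_energy_2}. Once this is in place, the rest of your outline---the $u_k=u+v_k$ expansion, the $\beta$-shift using $\divr u=0$, and the Young-splitting of the pressure term---goes through as in the paper.
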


\begin{proof}
To prove local energy inequalities \eqref{evo_local_energy_1} and \eqref{evo_local_energy_2}, we pass $k$ to infinity in the local energy inequalities for approximation sequence $u_k$. For the cutoff function $\varphi$ defined above, the local energy inequalities \eqref{evo_local_energy_seq} reduces to
\begin{equation} \label{evo_local_energy_seq_tp}
  \begin{split}
  \sup_t \int_{\R^4} &|u_k(t)|^2 \varphi(t)dx + \iint_D |\nabla u_k|^2 \varphi dxdt
  \leq \iint_D |u_k|^2 |\partial_t \varphi + \Delta \varphi| dxdt \\
    &+ \iint_D |u_k|^2 (\tilde{u}_k \cdot  \nabla)\varphi dxdt
      + \iint_D 2 p_k (u_k \cdot \nabla) \varphi dx + \iint_D f \cdot u_k \varphi dxdt.
  \end{split}
\end{equation}
Since $u_k \rightarrow u$ in $L^2(D)$, the convergence of the third and the last terms is straightforward. The convergence of the second term is given by \Cref{evo_tightness} and \eqref{parabolic_concentration_compactness_eq1} in \Cref{parabolic_concentration_compactness}. The difference between the two inequalities and the technical difficulties come from the rest terms, namely the cubic term of $u$ and the term involving $p$. \par

For the cubic term of $u$ in the local energy inequality \eqref{evo_local_energy_1}, note that
\[ \iint_D |\tilde{u}_k|^3 |\nabla \varphi| dxdt = \|h_1+h_2\|_{L^3(D)}^3, \]
where
\[
  \begin{split}
  h_1(x,t) &= \int_{\R^4} u_k(x-y,t) \chi_k(y) \big( |\nabla \varphi(x,t)|^{1/3} - |\nabla \varphi(x-y,t)|^{1/3} \big) dy, \\
  h_2(x,t) &= \int_{\R^4} u_k(x-y,t) \chi_k(y) |\nabla \varphi(x-y,t)|^{1/3} dy. \\
  \end{split}
\]
For $h_1$, note that $d_k:=\diam (\supp \chi_k) \rightarrow 0$ and that $x \rightarrow |x|^{1/3}$ is $1/3-$H{\"o}lder continuous, then Young's inequality for convolution yields
\[
  \begin{split}
  \|h_1\|_{L^3} &\leq 
     \Big\| \int_{\R^4} u_k(x-y,t) \chi_k(y) \frac{|\nabla \varphi(x,t)|^{1/3} 
      - |\nabla \varphi(x-y,t)|^{1/3}}{|y|^{1/3}} d_k^{1/3} dy \Big\|_{L^3(D)} \\
    &\leq C d_k^{1/3} \|\varphi\|_{C^2} \|\tilde{u}_k\|_{L^3(D)} \\
    &\leq C d_k^{1/3} \|\varphi\|_{C^2} \|u_k\|_{L^3(D)}.
  \end{split}
\]
We can then deduce that $h_1$ part converges to zero in $L^3$ when $k$ tends to infinity. For $h_2$, Young’s inequality for convolution yields
\[
  \|h_2\|_{L^3(D)} = \big\| \big(u_k|\nabla \varphi|^{1/3}\big) * \chi_k \big\|_{L^3(D)}
    \leq \big\| u_k |\nabla \varphi|^{1/3} \big\|_{L^3(D)}.
\]
Then these estimates for $h_1$ and $h_2$ yield
\begin{equation} \label{energy_convergence_eq3}
  \begin{split}
  \limsup_{k \rightarrow \infty} \iint_{D} |u_k|^2 (\tilde{u}_k \cdot \nabla) \varphi dxdt
    \leq& \frac{2}{3} \limsup_{k \rightarrow \infty} \iint_{D} |u_k|^3 |\nabla \varphi| dxdt \\
      &+ \frac{1}{3} \limsup_{k \rightarrow \infty} \iint_{D} |\tilde{u}_k|^3 |\nabla \varphi| dxdt \\
    \leq& \iint_{D} |\nabla \varphi| |u|^3dxdt + \iint_{D} |\nabla \varphi| d\omega.
  \end{split}
\end{equation}
\par

For the term involving pressure in the local energy inequality \eqref{evo_local_energy_1},
we use \Cref{pressure_convergence_nonsta} and the fact that $u_k$ is weakly divergence-free to bound
\[ 
  \begin{split}
  \limsup_{k \rightarrow \infty}\iint_{D} p_k (u_k \cdot \nabla) \varphi dxdt
    =& \sum_{i=1}^n \limsup_{k \rightarrow \infty} \iint_{D} p_k u_k \cdot \nabla \varphi_i dxdt \\
    =& \sum_{i=1}^n \limsup_{k \rightarrow \infty} \iint_{D} (p_k-\gamma_i) u_k \cdot \nabla \varphi_i dxdt \\
    \leq& \frac{1}{3} \sum_{i=1}^n \iint_{D} |u|^3 |\nabla \varphi_i| dxdt + \sum_{i=1}^n \iint_{D} |\nabla \varphi_i| d\omega \\
    &+ \sum_{i=1}^n \frac{2}{3} \iint_{D} |\nabla \varphi_i| |p-\gamma_i|^{3/2} dxdt.
  \end{split}
\]
\par

For the cubic term of $u$ in the local energy inequality \eqref{evo_local_energy_2}, we use the fact that $u_k,\tilde{u}_k$ and $u$ are weakly divergence-free. Thus,
\begin{equation} \label{measure_convergence_3_eq1}
  \begin{split}
  \iint_{D} |u_k|^2 (\tilde{u}_k \cdot \nabla) \varphi dxdt
    =& \iint_{D} |u_k - \beta + \beta|^2 (\tilde{u}_k \cdot \nabla) \varphi dxdt \\
    =& \iint_{D} \big[ |u_k-\beta|^2 + 2(u_k-\beta) \cdot \beta \big] 
      (\tilde{u}_k \cdot \nabla) \varphi dxdt \\
    =& \iint_{D} |u_k - \beta|^2 \big[ (\tilde{u}_k-\beta) \cdot \nabla \big] \varphi dxdt \\
      &+ \iint_{D} |u_k - \beta|^2 (\beta \cdot \nabla) \varphi dxdt \\
    &+ 2\iint_{D} \big[(u_k-\beta) \cdot \beta \big] 
      (\tilde{u}_k \cdot \nabla) \varphi dxdt.
  \end{split}
\end{equation}
Next, we argue that the individual terms above can be bounded by the weak limit $u$ and the concentration mass $\omega$. For the term in the third line of \eqref{measure_convergence_3_eq1}, since $\tilde{u}_k-\beta = (u_k - \beta) * \chi_k$, we can apply the same trick by replacing $u_k$ and $\tilde{u}_k$ with $u_k-\beta$ and $\tilde{u}_k-\beta$ and use Young's inequality for convolution, therefore it is sufficient to look at the following term
\[ 
  \begin{split}
  \iint_{D} |u_k - \beta|^2 &\big[ (u_k-\beta) \cdot \nabla \big] \varphi dxdt \\
    \leq& \iint_{D} |u_k - u|^3 |\nabla \varphi| dxdt + \iint_{D} | u-\beta |^3 |\nabla \varphi| dxdt \\
    & + \iint_{D} 3|u_k - u|^2 |u-\beta| |\nabla \varphi| dxdt \\
    & + \iint_{D} 3|u_k - u| |u-\beta|^2 |\nabla \varphi| dxdt \\
    \leq& \iint_{D} |u_k - u|^3 |\nabla \varphi| dxdt + \iint_D | u-\beta |^3 |\nabla \varphi| dxdt \\
    \rightarrow& \iint_{D} |\nabla \varphi| d\omega + \iint_{D} | u-\beta |^3 |\nabla \varphi| dxdt
    \quad \text{as } k \rightarrow \infty.
  \end{split}
\]
Now, we can pass $k \rightarrow \infty$ in the remaining two terms in the last line of \eqref{measure_convergence_3_eq1}. Hence we have
\[ 
  \begin{split}
  \limsup_{k \rightarrow \infty} &\iint_{D} |u_k|^2 (\tilde{u}_k \cdot \nabla) \varphi dxdt
    \leq \iint_{D} |\nabla \varphi| d\omega + \iint_{D} | u-\beta |^3 |\nabla \varphi| dxdt \\
    &+ \iint_{D} |u - \beta|^2 (\beta \cdot \nabla) \varphi dxdt
    + 2 \iint_{D} \big[(u-\beta) \cdot \beta \big] 
      (u \cdot \nabla) \varphi dxdt \\
    =& \iint_{D} |\nabla \varphi| d\omega + \iint_{D} | u-\beta |^3 |\nabla \varphi| dxdt \\
    &+ \iint_{D} |u|^2 (u \cdot \nabla) \varphi dxdt - \iint_{D} |u - \beta|^2 \big[ (u-\beta) \cdot \nabla \big] \varphi dxdt \\
    \leq& \iint_{D} |\nabla \varphi| d\omega + 2\iint_{D} | u-\beta |^3 |\nabla \varphi| dxdt
    + \iint_{D} |u|^2 (u \cdot \nabla) \varphi dxdt.
  \end{split}
\]
\par

Finally, for the term involving pressure in the local energy inequality \eqref{evo_local_energy_2}, \Cref{pressure_convergence_nonsta} yields
\[ 
  \begin{split}
  \limsup_{k \rightarrow \infty}\iint_{D} p_k (u_k \cdot \nabla) \varphi dxdt
    \leq & \iint_{D} p (u \cdot \nabla) \varphi dxdt + \iint_D |\nabla \varphi| d\omega.
  \end{split}
\] 
\end{proof}

\section{Partial regularity theory} \label{partial_regularity_nonstationary}

Partial regularity theory contains deep results of natural scaling and local energy estimates of the Navier-Stokes equations. In this section, we show that weak solution sets have the same scaling invariance as classical solutions, then we adapt Caffarelli, Kohn and Nirenberg's argument to space dimension 4 with the presence of concentration measures. \par

As we mentioned in the introduction, Scheffer proved $\mathcal{H}^3(S) < \infty$. An interesting point is that Scheffer overcame the loss of compactness in $L^3_{t,x}$ by proving uniform local $L^3_{t,x}$ estimate for the approximate solutions $u_k$\footnote{One can see Lemma 2.6 in Scheffer \cite{scheffer1978navier} for details.}, then one can pass the local estimate to the weak limit without splitting the concentration measures and the weak limit $u \in L^3$. In Scheffer's approach, local $L^3_{t,x}$ estimate gives the bound for $\mathcal{H}^3$ measure, while in our work, the $L^2_tH_x^1$ estimate gives more refined bound for $\mathcal{H}^2$ measure. \par

\subsection{Dimensionless estimates in space dimension 4}
The Navier-Stokes equations have a nice scaling property. If $(u,p)$ solves \eqref{navierstokes} with force $f$, then $u_r, p_r$ defined by
\[ u_r(x,t) = r u(rx,r^2t) \quad p_r(x,t) = r^2 p(rx,r^2t) \]
solve \eqref{navierstokes} with force $f_r$ defined by
\[ f_r(x,t) = r^3 f(rx,r^2t). \] \par
The weak solution sets also have a similar scaling property.
\begin{lemma}
If $(u,p,\lambda,\omega)$ is a weak solution set of the Navier-Stokes equations \eqref{navierstokes} with external force $f$, then for any $r>0$, the scaled quadruple $(u_r,p_r,\lambda_r,\omega_r)$ is also a weak solution set of \eqref{navierstokes} with external force $f_r$, where $u_r,p_r$ and $f_r$ are defined as above and $\lambda_r,\omega_r$ are defined as
\[ 
  \begin{split}
  \iint_E d\lambda_r &:= r^{-2}\iint_{\{(rx,r^2t)|(x,t) \in E\}} d\lambda \\
  \iint_E d\omega_r  &:= r^{-3} \iint_{\{(rx,r^2t)|(x,t) \in E\}} d\omega
  \end{split}
\]
for any $E \subset \R^4 \times \R$. 
\end{lemma}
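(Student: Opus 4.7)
The plan is to pull the scaling all the way back to the approximating sequence from \Cref{preparation_existence_nonsta} and check that the rescaling commutes with every step of the construction in \Cref{weaksolutionset_nonsta}. Let $\{(u_k,p_k)\}_{k \in \N}$ be the sequence of solutions of the regularized equations \eqref{reg_navierstokes} whose weak limits produce $(u,p)$, and whose measures $|\nabla u_k|^2 dxdt$ and $|u_k|^3 dxdt$ generate $\lambda$ and $\omega$ through \Cref{parabolic_concentration_compactness}. Define
\[ u_{k,r}(x,t) := r\, u_k(rx,r^2t), \qquad p_{k,r}(x,t) := r^2 p_k(rx,r^2t), \qquad \chi_{k,r}(y) := r^4 \chi_k(ry). \]
First I would verify by direct differentiation that $(u_{k,r},p_{k,r})$ solves \eqref{reg_navierstokes} with the mollifier $\chi_{k,r}$ and force $f_r$; the only mildly delicate point is the convection term, for which the change of variables $z=ry$ yields $(\chi_{k,r}*u_{k,r})(x,t) = r(\chi_k*u_k)(rx,r^2t)$, so that $[(\chi_{k,r}*u_{k,r})\cdot\nabla]u_{k,r}(x,t) = r^3 [(\chi_k*u_k)\cdot\nabla u_k](rx,r^2t)$, matching the scalings of the other terms. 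Since $\{\chi_{k,r}\}_{k\in\N}$ is again a sequence of standard mollifiers (just with rescaled bandwidth), this places $(u_{k,r},p_{k,r})$ into the framework required by \Cref{preparation_existence_nonsta}.

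Second, I would verify the weak convergences in \eqref{nonstationary_convergence} for the rescaled sequence. A change of variables shows that the rescaling is a bounded linear operation on $L_t^\infty L_x^2 \cap L_t^2 H_x^1$ and on $L^{3/2}$, and it is continuous for both weak and weak-$*$ topologies, so $(u_{k,r},p_{k,r}) \rightharpoonup (u_r,p_r)$. Uniform energy bounds and the $L^{3/2}$-bound on $p_{k,r}$ follow from the corresponding bounds on $(u_k,p_k)$ multiplied by appropriate powers of $r$, and the strong $L^2_{\text{loc}}$ convergence likewise survives the scaling.

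Third, I would pass the concentration measures through the rescaling. By change of variables,
\[ \iint \varphi(x,t)\, |\nabla u_{k,r}(x,t)|^2\, dxdt = r^{-2} \iint \varphi(r^{-1}y, r^{-2}s)\, |\nabla u_k(y,s)|^2\, dyds \]
for every $\varphi \in C_c(D)$, and similarly with exponent $r^{-3}$ for $|u_{k,r}|^3 dxdt$. Taking $k\to\infty$ on both sides gives that the weak limits of $|\nabla u_{k,r}|^2 dxdt$ and $|u_{k,r}|^3 dxdt$ are precisely the pullback measures whose action on $\varphi$ equals $r^{-2}\iint \varphi(r^{-1}y,r^{-2}s)d(|\nabla u|^2 dyds + \lambda)$ and $r^{-3}\iint \varphi(r^{-1}y,r^{-2}s)d(|u|^3dyds+\omega)$, respectively. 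Comparing with the identities $|\nabla u_r(x,t)|^2 = r^4 |\nabla u(rx,r^2t)|^2$ and $|u_r(x,t)|^3 = r^3|u(rx,r^2t)|^3$ and using the definition of $\lambda_r,\omega_r$, one sees that the decompositions \eqref{parabolic_concentration_compactness_eq1} and \eqref{parabolic_concentration_compactness_eq2} for the rescaled sequence yield exactly $|\nabla u_r|^2 dxdt + \lambda_r$ and $|u_r|^3 dxdt + \omega_r$.

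The main obstacle is purely bookkeeping: making sure that the scaling factors on every term of \eqref{reg_navierstokes}, \eqref{nonstationary_convergence} and the measure decompositions are consistent, and that the rescaled mollifier sequence $\{\chi_{k,r}\}_{k\in\N}$ is admissible as an approximating mollifier sequence in the sense of \Cref{preparation_existence_nonsta}. Once this is in place, the four conditions of \Cref{weaksolutionset_nonsta} for $(u_r,p_r,\lambda_r,\omega_r)$ are verified with $\{(u_{k,r},p_{k,r})\}_{k\in\N}$ as the approximating sequence, and the lemma follows.
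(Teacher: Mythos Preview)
Your argument is correct. The paper states this lemma without proof, so there is nothing to compare against; your approach---pulling the scaling back to the approximating sequence $\{(u_k,p_k)\}$, verifying that $(u_{k,r},p_{k,r})$ solves \eqref{reg_navierstokes} with the rescaled mollifiers $\chi_{k,r}$ and force $f_r$, and then tracking the scaling through the weak limits and through the measure decompositions \eqref{parabolic_concentration_compactness_eq1}--\eqref{parabolic_concentration_compactness_eq2}---is exactly the natural verification of \Cref{weaksolutionset_nonsta}. The scaling computations you record (in particular the factors $r^{-2}$ and $r^{-3}$ arising from the change of variables in $\iint |\nabla u_{k,r}|^2\,dxdt$ and $\iint |u_{k,r}|^3\,dxdt$) are correct and match the stated definitions of $\lambda_r$ and $\omega_r$.

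Two cosmetic remarks. First, you refer to ``the four conditions of \Cref{weaksolutionset_nonsta}'', but that definition lists only two items; your argument does cover both. Second, the point you flag as the ``main obstacle''---that $\{\chi_{k,r}\}_{k\in\N}$ is an admissible mollifier sequence---is immediate: each $\chi_{k,r}$ is nonnegative, smooth, compactly supported with $\int \chi_{k,r}=1$, and $\operatorname{supp}\chi_{k,r}$ shrinks to $\{0\}$ as $k\to\infty$, which is all that \Cref{preparation_existence_nonsta} requires.
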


\par

For a weak solution set $(u,p,\lambda,\omega)$, we give short-hand notations for the following scale-invariant quantities.
\begin{equation}
  \begin{split}
  A(x_0, t_0, r) &= \limsup_{k \rightarrow +\infty} \underset{t_0-r^2<t<t_0}{\sup} r^{-2} \int_{B_r(x_0)} |u_k(t)|^2 dx \\
  \delta (x_0, t_0, r) &=  r^{-2} \iint_{Q_r(x_0,t_0)} |\nabla u|^2 dx dt \\
  \delta_c (x_0, t_0, r) &=  r^{-2} \iint_{Q_r(x_0,t_0)} d\lambda \\
  G(x_0, t_0, r) &= r^{-3} \iint_{Q_r(x_0,t_0)} |u|^3 dx dt \\
  G_c(x_0, t_0, r) &= r^{-3} \iint_{Q_r(x_0,t_0)} d\omega \\
  H(x_0, t_0, r) &= r^{-3} \iint_{Q_r(x_0,t_0)} |u-\tilde{u}_{r,x_0}|^3 dx dt \\
  K(x_0, t_0, r) &= r^{-3} \iint_{Q_r(x_0,t_0)} |p|^{3/2} dx dt \\
  L(x_0, t_0, r) &= r^{-3} \iint_{Q_r(x_0,t_0)} |p-\tilde{p}_{r,x_0}|^{3/2} dx dt \\
  F_1(x_0, t_0, r) &= r^{3q-6} \iint_{Q_r(x_0,t_0)} |f|^q dx dt \\
  F_2(x_0, t_0, r) &= \iint_{Q_r(x_0,t_0)} |f|^2 dx dt
  \end{split}
\end{equation}
where
\[
  \begin{split}
  \tilde{u}_{r,x_0}(t) &= \frac{1}{\mathcal{L}(B_r)} \int_{B_r(x_0)} u(x,t) dx \\
  \tilde{p}_{r,x_0}(t) &= \frac{1}{\mathcal{L}(B_r)} \int_{B_r(x_0)} p(x,t) dx
  \end{split} \]
and $Q_r(x,t)$ is the parabolic cylinder centered at $(x,t)$ given by
\[ Q_r(x,t) := B_r(x) \times ( t-r^2, t ). \]
When $(x_0,t_0)=(0,0)$, we abbreviate $A(0, 0, r)$ to $A(r)$. This convention also applies to other quantities and parabolic cylinders. For technical reasons, we also need another quantity $L'$ which is not scale-invariant.
\[ L'(x_0, t_0, r) = r^{-{5/2}} \iint_{Q_r(x_0,t_0)} |p-\tilde{p}_{r,x_0}|^{3/2} dx dt 
    = r^{1/2}L(x_0, t_0, r) \]
Note that already in the work \cite{caffarelli1982partial} of Caffarelli, Kohn and Nirenberg, a quantity similar to $L'$ that is not scale-invariant plays an important role.
\par
A crucial component of proving partial regularity in space dimension 4 is interpolation inequalities. Next we introduce three interpolation inequalities based on the above dimensionless quantities.
\begin{lemma} \label{interpolation}
Suppose that $(u,p,\lambda,\omega)$ is a weak solution set of the Navier-Stokes equations \eqref{navierstokes} in space dimension $4$ in $Q_r(x_0,t_0)$. Then there exists an absolute constant $C_1>0$, which is independent of $(x_0,t_0) \in \R^4 \times \R$ and $r>0$, such that
\[
  \begin{split}
  G(x_0,t_0,r) &\leq C_1 A^{3/2}(x_0,t_0,r) + C_1 \delta^{3/2}(x_0,t_0,r), \\
  G_c(x_0,t_0,r) &\leq C_1 A^{1/2}(x_0,t_0,r) \delta_c(x_0,t_0,r), \\
  H(x_0,t_0,r) &\leq C_1 A^{1/2}(x_0,t_0,r) \delta(x_0,t_0,r).
  \end{split}
\]
\end{lemma}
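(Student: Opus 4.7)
The plan is to prove all three bounds by a common parabolic interpolation scheme: decompose $L^3_x$ via H\"older as an interpolation between $L^2_x$ and the Sobolev conjugate $L^4_x$, then pull out the $L^\infty_t L^2_x$-factor and bound the remaining $L^2_t L^4_x$-factor by Sobolev embedding. The key input is the pointwise-in-time identity
\[ \|w(t)\|_{L^3(B_r)}^3 \leq \|w(t)\|_{L^2(B_r)} \|w(t)\|_{L^4(B_r)}^2 \]
(H\"older with conjugate exponents $2,2$), which after integration over $t \in (t_0 - r^2, t_0)$ gives
\[ \iint_{Q_r} |w|^3 \,dx\,dt \leq \Big(\sup_t \|w(t)\|_{L^2(B_r)}\Big) \int_{t_0-r^2}^{t_0} \|w(t)\|_{L^4(B_r)}^2 \,dt. \]
The spatial dimension $n=4$ enters essentially here, because $4$ is exactly the critical Sobolev exponent for $H^1(\R^4)$.

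For the first estimate, take $w=u$. The local Sobolev embedding, obtained by rescaling from $H^1(\R^4) \hookrightarrow L^4(\R^4)$, yields $\|u\|_{L^4(B_r)} \leq C\|\nabla u\|_{L^2(B_r)} + C r^{-1}\|u\|_{L^2(B_r)}$. Squaring and integrating in $t$ gives $\int \|u\|_{L^4(B_r)}^2\,dt \leq Cr^2(\delta + A)$, and using $\sup_t \|u(t)\|_{L^2(B_r)} \leq rA^{1/2}$ (which follows from the weak-$\ast$ lower semicontinuity of the $L^\infty_t L^2_x$ norm applied to $u = \lim_k u_k$) one obtains $G \leq CA^{1/2}(A+\delta)$. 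Young's inequality $A^{1/2}\delta \leq \tfrac{1}{3}A^{3/2} + \tfrac{2}{3}\delta^{3/2}$ then converts the cross term into $A^{3/2} + \delta^{3/2}$, finishing the bound on $G$.

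For the third estimate, apply the same scheme to $w = u - \tilde{u}_{r,x_0}$. The improvement is that the mean-subtracted function satisfies the Poincar\'e--Sobolev inequality $\|u - \tilde{u}_{r,x_0}\|_{L^4(B_r)} \leq C\|\nabla u\|_{L^2(B_r)}$ with a constant independent of $r$ (no $r^{-1}$ zeroth-order term, because the average has been subtracted). Combined with the obvious bound $\sup_t \|u - \tilde{u}_{r,x_0}\|_{L^2(B_r)} \leq 2rA^{1/2}$, this yields $H \leq CA^{1/2}\delta$ at once, with no cross term to dispose of. The second estimate is immediate from \Cref{parabolic_concentration_compactness}: applying the quantitative Radon--Nikodym inequality there with $Q = Q_r(x_0,t_0)$ gives $\iint_{Q_r} d\omega \leq C\liminf_k \|u_k - u\|_{L^\infty_t L^2_x(Q_r)} \iint_{Q_r} d\lambda$, and the triangle inequality together with weak-$\ast$ lower semicontinuity bounds the $\liminf$ by $2rA^{1/2}$, producing $G_c \leq CA^{1/2}\delta_c$.

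The only place where a subtlety could arise is the bookkeeping of Sobolev constants so that the $r$-dependence is captured correctly. This works out cleanly in space dimension four precisely because the $L^4$-norm scales like $r^{-1}$ times the $L^2$-norm of a function and like $r^0$ times the $L^2$-norm of the gradient; all estimates above are therefore scale-invariant, as they must be in order to combine into the dimensionless quantities $A, \delta, \delta_c, G, G_c, H$. There is no genuine analytic obstacle beyond this scaling verification.
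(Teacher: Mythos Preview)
Your proof is correct and follows essentially the same approach as the paper: the paper likewise reduces (via scale-invariance) to the $L^3$--$L^2$--$L^4$ interpolation, invokes Sobolev embedding (respectively Sobolev--Poincar\'e for the mean-subtracted quantity $H$), integrates in time, uses weak-$\ast$ lower semicontinuity to control $\|u\|_{L^\infty_t L^2_x}$ by $A^{1/2}$, and appeals directly to \Cref{parabolic_concentration_compactness} for $G_c$. The only cosmetic difference is that the paper normalizes to $r=1$ at the outset and scales back at the end, whereas you track the $r$-dependence throughout; both are equivalent given the scale-invariance you note.
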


\begin{proof}
Since all quantities here are scale-invariant, it suffices to prove these inequalities for $r=1$.
By Lebesgue interpolation inequality,
\[  
  \begin{split}
  \|u\|_{L^3(B_1(x_0))} 
    &\leq \|u\|_{L^4(B_1(x_0))}^{2/3} \|u\|_{L^2(B_1(x_0))}^{1/3}, \\
  \|u-\tilde{u}_{1,x_0}\|_{L^3(B_1(x_0))} 
    &\leq \|u-\tilde{u}_{1,x_0}\|_{L^4(B_1(x_0))}^{2/3} \|u-\tilde{u}_{1,x_0}\|_{L^2(B_1(x_0))}^{1/3} \\
    &\leq \|u-\tilde{u}_{1,x_0}\|_{L^4(B_1(x_0))}^{2/3} \|u\|_{L^2(B_1(x_0))}^{1/3}.
  \end{split}
\]
By Sobolev embedding and Sobolev-Poincar\'{e} inequality,
\[
  \begin{split}
  \|u\|_{L^4(B_1(x_0))} &\lesssim \|u\|_{L^2(B_1(x_0))} + \|\nabla u\|_{L^2(B_1(x_0))} \\
  \|u-\tilde{u}_{1,x_0}\|_{L^4(B_1(x_0))} &\lesssim  \|\nabla u\|_{L^2(B_1(x_0))}.
  \end{split}
\]
Then we integrate in time and use Young's inequality,
\[ 
  \begin{split}
  \iint_{Q_1(x_0,t_0)} |u|^3 dx dt
  &\lesssim \int_{t_0-1}^{t_0} \Big( A(x_0,t_0,1) + \int_{B_1(x_0)} |\nabla u|^2 dx \Big) 
    A^{1/2}(x_0,t_0,1) dt \\
  &= A^{3/2}(x_0,t_0,1) + A^{1/2}(x_0,t_0,1) \delta(x_0,t_0,1) \\
  &\lesssim A^{3/2}(x_0,t_0,1) + \delta^{3/2}(x_0,t_0,1).
  \end{split}
\]
In the first inequality, we use lower semi-continuity of the weak-$*$ convergence to bound $\|u\|_{L_t^{\infty}L_x^2}$ with $\limsup \|u_k\|_{L_t^{\infty}L_x^2}$. Similarly, we also have
\[
  \iint_{Q_1(x_0,t_0)} |u-\tilde{u}_{r,x_0}|^3 dxdt \lesssim A^{1/2}(x_0,t_0,1) \delta(x_0,t_0,1).
\]
\par
The second interpolation inequality follows directly from \Cref{parabolic_concentration_compactness}.
\par
As we mentioned, these quantities are scale-invariant. Then we can obtain the inequalities for $r \neq 1$ via scaling.
\end{proof}

The second key ingredient is the local energy inequalities \eqref{evo_local_energy_1} and \eqref{evo_local_energy_2}. To use these local energy inequalities, we also need different types of estimates for the pressure term. We prove a 4-dimensional analogue of Lemma 3.2 in \cite{caffarelli1982partial}.

\begin{lemma} \label{pressurepreestimate}
Suppose that $(u,p,\lambda,\omega)$ is a weak solution set of the Navier-Stokes equations \eqref{navierstokes} in space dimension $4$ in $Q_{\rho}(x_0,t_0)$. Then there exists an absolute constant $C_2>0$, which is independent of $(x_0,t_0) \in \R^4 \times \R$ and $\rho>0$, such that
\begin{equation} \label{pressurepreestimate_eq0}
  \begin{split}
  L'(x_0, t_0, r) \leq & C_2 r^{-5/2} \iint_{Q_{2r}(x_0,t_0)} |u|^3 dxdt 
    + C_2 r^5 \Big( \underset{t_0-r^2<t<t_0}{\sup} \int_{2r<|y-x_0|<\rho} \frac{|u|^2}{|y-x_0|^5} dy \Big)^{3/2} \\
  + & C_2 \frac{r^3}{\rho^{11/2}} \iint_{Q_{\rho}(x_0,t_0)} \big(|u|^3 + |p|^{3/2}\big) dxdt,
  \end{split}
\end{equation}
where $0<r\leq\frac{\rho}{2}$.
\end{lemma}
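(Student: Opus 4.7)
The plan is to follow the classical Caffarelli--Kohn--Nirenberg scheme, adapted to space dimension four. After translating so that $(x_0,t_0)=(0,0)$, I would exploit the pressure equation $-\Delta p = \partial_i\partial_j(u^i u^j)$ that holds distributionally on $B_\rho$ (see \Cref{rmk_general_ns}). For almost every $t$ I split the pressure as $p = p_1 + p_2 + p_3$, where
\[
p_1(\cdot,t) := (-\Delta)^{-1}\partial_i\partial_j\bigl(\chi_{B_{2r}} u^i u^j(\cdot,t)\bigr),\qquad
p_2(\cdot,t) := (-\Delta)^{-1}\partial_i\partial_j\bigl(\chi_{B_\rho\setminus B_{2r}} u^i u^j(\cdot,t)\bigr)
\]
are defined on all of $\R^4$ via Calder\'on--Zygmund theory, and $p_3 := p - p_1 - p_2$ is harmonic on $B_\rho$ since the three sources add up to $\partial_i\partial_j(u^iu^j)$ there.

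For $p_1$, Calder\'on--Zygmund gives $\|p_1(t)\|_{L^{3/2}(\R^4)} \lesssim \|u(t)\|_{L^3(B_{2r})}^2$. After subtracting the spatial average on $B_r$ (which only costs a constant), raising to the $3/2$ power, integrating over $t \in (-r^2, 0)$, and multiplying by $r^{-5/2}$, I obtain the first term $C_2 r^{-5/2}\iint_{Q_{2r}}|u|^3\,dxdt$. For $p_2$, the four-dimensional Newton kernel $G(z) = c|z|^{-2}$ satisfies $|\nabla\partial_i\partial_j G(z)| \lesssim |z|^{-5}$, and for $x \in B_r$ and $y \in B_\rho\setminus B_{2r}$ one has $|x-y| \geq |y|/2$. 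This yields the pointwise estimate
\[
|\nabla p_2(x,t)| \lesssim \int_{2r<|y|<\rho} \frac{|u(y,t)|^2}{|y|^5}\,dy,\qquad x\in B_r.
\]
A mean-value argument gives $\|p_2 - \widetilde{(p_2)}_r\|_{L^\infty(B_r)} \lesssim r\,\|\nabla p_2\|_{L^\infty(B_r)}$, hence $\|p_2-\widetilde{(p_2)}_r\|_{L^{3/2}(B_r)}^{3/2} \lesssim r^{11/2}\|\nabla p_2\|_{L^\infty(B_r)}^{3/2}$; integrating in time over $(-r^2,0)$ and multiplying by $r^{-5/2}$ delivers the second term.

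For the harmonic remainder $p_3$, interior gradient estimates in dimension four give $\|\nabla p_3(t)\|_{L^\infty(B_r)} \lesssim \rho^{-5}\|p_3(t)\|_{L^1(B_\rho)}$ whenever $r\le \rho/2$, which via H\"older in space becomes $\lesssim \rho^{-11/3}\|p_3(t)\|_{L^{3/2}(B_\rho)}$. Combining this with the same Poincar\'e-type computation as for $p_2$ produces the factor $r^{11/2}\rho^{-11/2}$, and a second application of Calder\'on--Zygmund, $\|p_1+p_2\|_{L^{3/2}(\R^4)}^{3/2}\lesssim \|u\|_{L^3(B_\rho)}^3$, controls $\iint_{Q_\rho}|p_3|^{3/2}$ by $\iint_{Q_\rho}(|p|^{3/2}+|u|^3)\,dxdt$, delivering the third term $C_2 r^3\rho^{-11/2}\iint_{Q_\rho}(|u|^3+|p|^{3/2})\,dxdt$.

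The main technical obstacle is careful bookkeeping of the dimension-dependent exponents: in $\R^4$ the third derivative of the Newton kernel decays like $|z|^{-5}$ (rather than $|z|^{-4}$ as in $\R^3$), $|B_r|\simeq r^4$ produces the Poincar\'e factor $r^{11/2}$ instead of the three-dimensional $r^{7/2}$, and the interior harmonic estimate scales like $\rho^{-5}$. Combining these with the $L^{3/2}\to L^1$ conversion on $B_\rho$ (a factor $\rho^{4/3}$), the time integration over $(-r^2,0)$, and the normalization $r^{-5/2}$ built into $L'$ is precisely what selects the exponents $r^{-5/2}$, $r^5$, and $r^3\rho^{-11/2}$ appearing on the right of \eqref{pressurepreestimate_eq0}. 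The concentration measures $\lambda,\omega$ play no direct role here: all that is needed from the weak solution set is that $-\Delta p = \partial_i\partial_j(u^iu^j)$ distributionally with $u \in L^3_{\mathrm{loc}}(D)$ and $p \in L^{3/2}(D)$, both of which were established in \Cref{preparation_existence_nonsta} and \Cref{strong_conv}.
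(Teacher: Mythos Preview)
Your proof is correct and reaches the same conclusion, but the decomposition you use differs from the paper's. The paper multiplies $p$ by a smooth cutoff $\psi$ supported in $B_\rho$ with $\psi\equiv 1$ on $B_{3\rho/4}$, then writes $p\psi$ via the Newton potential representation $p\psi = (-\Delta)^{-1}(-\Delta)(p\psi)$. Expanding $-\Delta(p\psi)$ by the product rule produces four pieces: two coming from $u_iu_j\psi$ restricted to $B_{2r}$ and to $B_\rho\setminus B_{2r}$ (matching your $p_1,p_2$), and two further pieces $p_3,p_4$ arising from derivatives landing on $\psi$, which are therefore supported in the annulus $B_\rho\setminus B_{3\rho/4}$ and are estimated directly by the explicit kernel bounds $|\nabla\psi|\lesssim\rho^{-1}$, $|\nabla^2\psi|\lesssim\rho^{-2}$. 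Your route instead uses a sharp cutoff on the source and treats the remainder $p_3=p-p_1-p_2$ as a harmonic function on $B_\rho$, invoking the interior gradient estimate $\|\nabla p_3\|_{L^\infty(B_r)}\lesssim\rho^{-5}\|p_3\|_{L^1(B_\rho)}$ and then bounding $\|p_3\|_{L^{3/2}(B_\rho)}$ by $\|p\|_{L^{3/2}}+\|p_1+p_2\|_{L^{3/2}}$ via a second Calder\'on--Zygmund step. Both decompositions are standard; yours is arguably a touch cleaner conceptually since it avoids the integration-by-parts bookkeeping, while the paper's version keeps every term as an explicit integral and never needs to re-invoke Calder\'on--Zygmund on $B_\rho$ to control the remainder. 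The exponent tracking in your last paragraph is accurate and matches the paper's computation.
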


\begin{proof}[Proof of \Cref{pressurepreestimate}]
It suffices to prove the estimate when $(x_0,t_0)=(0,0)$. Choose a cutoff function $\psi \in C_c^{\infty}(\R^4)$ such that $0 \leq \psi \leq 1$ and
\begin{equation} \label{pressurepreestimate_eq1}
  \begin{split}
  \psi \equiv 1 \text{ in } B_{3\rho/4}, \quad
  \psi \equiv 0 \text{ in } \R^4 \backslash B_{\rho}, \quad
  |\nabla \psi| \lesssim \rho^{-1}, \quad
  |\nabla^2 \psi| \lesssim \rho^{-2}.
  \end{split}
\end{equation}
Then we localize the pressure equation and integrate by parts to move the differentiation from $u$ and $p$ to $\psi$,
\begin{equation} \label{pressurepreestimate_not1}
  \begin{split}
  p(x,t)\psi(x) =& (-\Delta)^{-1} (-\Delta) \big(p(x,t)\psi(x)\big) \\
  =& \frac{1}{4\pi^2} \int_{\R^4} \frac{1}{|x-y|^2}
    \big( \psi \partial_i \partial_j(u_iu_j) -2\nabla \psi \cdot \nabla p - p\Delta \psi \big) dy \\
  =& \frac{1}{4\pi^2} \int_{\R^4} u_iu_j \psi \partial_i \partial_j \Big( \frac{1}{|x-y|^2} \Big) dy \\
  +& \frac{1}{4\pi^2} \int_{\R^4} u_iu_j \Big( \frac{\partial_i \partial_j \psi}{|x-y|^2}
                        + \partial_j \psi \frac{4(x_i-y_i)}{|x-y|^4} \Big) dy \\
  +& \frac{1}{4\pi^2} \int_{\R^4} p \Big( \frac{\Delta \psi}{|x-y|^2} 
                      + \frac{4(x-y) \cdot \nabla \psi}{|x-y|^4} \Big) dy \\
  =& p_1(x,t) + p_2(x,t) + p_3(x,t) + p_4(x,t),
  \end{split}
\end{equation}
where by the fact that $\psi$ is supported in $B_{\rho}$,
\begin{equation} \label{pressurepreestimate_not2}
  \begin{split}
  p_1(x,t) &= \frac{1}{4\pi^2} \int_{B_{2r}} u_iu_j \psi \partial_i \partial_j \Big( \frac{1}{|x-y|^2} \Big) dy, \\
  p_2(x,t) &= \frac{1}{4\pi^2} \int_{B_{\rho} \backslash B_{2r}} u_iu_j \psi \partial_i \partial_j \Big( \frac{1}{|x-y|^2} \Big) dy, \\
  p_3(x,t) &= \frac{1}{4\pi^2} \int_{B_{\rho}} u_iu_j \Big( \frac{\partial_i \partial_j \psi}{|x-y|^2}
                        + \partial_j \psi \frac{4(x_i-y_i)}{|x-y|^4} \Big) dy, \\
  p_4(x,t) &= \frac{1}{4\pi^2} \int_{B_{\rho}} p \Big( \frac{\Delta \psi}{|x-y|^2} 
                      + \frac{4(x-y) \cdot \nabla \psi}{|x-y|^4} \Big) dy.
  \end{split}
\end{equation}
Now, we decompose $L'(x_0, t_0, r)$ into four terms involving $p_1,p_2,p_3$ and $p_4$ respectively and estimate them separately,
\begin{equation} \label{pressurepreestimate_eq2}
  L'(x_0, t_0, r) \leq \sum_{l=1}^4 r^{-5/2} \iint_{Q_r} |p_l-\tilde{p}_{l,r}|^{3/2} dxdt.
\end{equation} \par
We interpret $p_1$ as $p_1 = T_{ij} \big( u_iu_j \psi \big)$, where singular integral operators $\{T_{ij}\}_{1 \leq i,j \leq 4}$ are given by
\begin{equation} \label{Toperator}
  T_{ij} \zeta = \Big( \partial_i \partial_j \Big( \frac{1}{|x|^2} \Big) \Big) \ast \zeta.
\end{equation}
From Calder\'on-Zygmund theory we know $\{T_{ij}\}_{1 \leq i,j \leq 4}$ are bounded linear operators from $L^q(\R^4)$ to $L^q(\R^4)$ for any $1 < q < \infty$, hence let 
\[ \zeta(y,t) = u_i(y,t)u_j(y,t) \psi(y) \boldsymbol{1}_{\{y \in B_{2r}\}} \] and it yields
\[ \int_{B_r} |p_1|^{3/2} dx \lesssim \int_{B_{2r}} |u|^3 dx. \]
By a simple computation and integrating in time, we have
\begin{equation} \label{pressurepreestimate_eq3}
  \iint_{Q_r} |p_1-\tilde{p}_{1,r}|^{3/2} dxdt \leq C_2 \iint_{Q_{2r}} |u|^3 dxdt.
\end{equation}
\par
We estimate the remaining terms by bounding the $L^{\infty}$-norm of the space derivatives of the pressure $p$.
\par
For $p_2$, we can control its derivative as follows. When $(x,t) \in Q_r$,
\[ |\nabla p_2(x,t)| \lesssim \int_{2r<|y|<\rho} \frac{\psi|u|^2}{|x-y|^5} dx
          \lesssim \int_{2r<|y|<\rho} \frac{|u|^2}{|y|^5} dx. \]
The second inequality follows from $2|x-y|>|y|$ when $x \in B_r, y \in B^c_{2r}$. Then we can estimate the second term in \eqref{pressurepreestimate_eq2} by mean value theorem as follows,
\begin{equation}  \label{pressurepreestimate_eq4}
  \begin{split}
  \iint_{Q_r} |p_2-\tilde{p}_{2,r}|^{3/2} dxdt 
    &\leq \frac{\pi^2}{2} r^4\int_{-r^2}^0 \|p_2-\tilde{p}_{2,r}\|^{3/2}_{L^{\infty}(B_r)} dt \\
    &\lesssim \frac{\pi^2}{2}r^{11/2} \int_{-r^2}^0 \|\nabla p_2\|^{3/2}_{L^{\infty}(B_r)} dt \\
    &\lesssim r^{15/2} \Big( \underset{-r^2<t<0}{\sup} \int_{2r<|y|<\rho} \frac{|u|^2}{|y|^5} dx \Big)^{3/2}.
  \end{split}
\end{equation}
Similarly, for $p_3$ and $p_4$, note that $\nabla \psi =0$ and $\nabla^2 \psi =0$ in $B_{3 \rho/4}$. Moreover, when $x \in B_r$ and $y \in B_{\rho} \backslash B_{3\rho/4}$, $|x-y|>\frac{\rho}{4}$. Hence for  $(x,t) \in Q_r$,
\begin{equation} \label{pressurepreestimate_eq5}
  \begin{split}
  |\nabla p_3(x,t)| &\lesssim \int_{B_{\rho} \backslash B_{3\rho/4}} |u|^2
                \Big( \frac{|\nabla^2 \psi|}{|x-y|^3} + \frac{|\nabla \psi|}{|x-y|^4} \Big) dy
           \lesssim \rho^{-5} \int_{B_{\rho} \backslash B_{3\rho/4}} |u|^2 dy, \\
  |\nabla p_4(x,t)| &\lesssim \int_{B_{\rho} \backslash B_{3\rho/4}} |p|
                \Big( \frac{|\nabla^2 \psi|}{|x-y|^3} + \frac{|\nabla \psi|}{|x-y|^4} \Big) dy
           \lesssim \rho^{-5} \int_{B_{\rho} \backslash B_{3\rho/4}} |p| dy.
  \end{split}
\end{equation}
Thus,
\begin{equation}  \label{pressurepreestimate_eq6}
  \begin{split}
  \sum_{l=3}^4 \iint_{Q_r} |p_l-\tilde{p}_{l,r}|^{3/2} dxdt
    &\leq \sum_{l=3}^4 \frac{\pi^2}{2} r^4\int_{-r^2}^0 \|p_l-\tilde{p}_{l,r}\|^{3/2}_{L^{\infty}(B_r)} dt \\
    &\lesssim \sum_{l=3}^4 \frac{\pi^2}{2} r^{11/2}\int_{-r^2}^0 \|\nabla p_l\|^{3/2}_{L^{\infty}(B_r)} dt \\
    &\lesssim \Big( \frac{r}{\rho}\Big)^{11/2} \int_{Q_{\rho}} |u|^3+|p|^{3/2} dxdt.
  \end{split}
\end{equation}
The second inequality follows from mean value theorem and the last one follows from \eqref{pressurepreestimate_eq5} and H{\"o}lder's inequality. \par
Finally, combining the estimates \eqref{pressurepreestimate_eq3}, \eqref{pressurepreestimate_eq4} and \eqref{pressurepreestimate_eq6} and dividing them by $r^{5/2}$ yield \eqref{pressurepreestimate_eq0}.
\end{proof}

To obtain partial regularity theory in space dimension 4, we also need another estimate for the pressure $p$.

\begin{lemma} \label{pressurepreestimate_2}
Suppose that $(u,p,\lambda,\omega)$ is a weak solution set of the Navier-Stokes equations \eqref{navierstokes} in space dimension $4$ in $Q_r(x_0,t_0)$. Then there exists an absolute constant $C_3>0$, which is independent of $(x_0,t_0) \in \R^4 \times \R$ and $r>0$, such that
\begin{equation} \label{pressurepreestimate_2_eq0}
  \begin{split}
  K(x_0, t_0, \theta r) \leq & C_1C_3 \theta^{-3} A^{1/2}(x_0, t_0, r) \delta(x_0, t_0, r) 
    + C_3 \theta K(x_0, t_0, r)
  \end{split}
\end{equation}
for any $\theta \in (0,\frac{1}{2}]$. The constant $C_1>0$ is absolute and comes from \Cref{interpolation}.
\end{lemma}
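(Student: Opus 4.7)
The plan is to split $p$ at scale $r$ into a local Calderón--Zygmund piece controlled by $A^{1/2}\delta$ via the interpolation already proved in \Cref{interpolation}, and a harmonic piece with natural decay on concentric smaller balls.

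Without loss of generality, assume $(x_0,t_0)=(0,0)$. Let $\bar u(t) := \tilde u_{r,0}(t)$ and set $v(x,t) := u(x,t)-\bar u(t)$. Since $u(\cdot,t)$ is spatially divergence-free and $\bar u$ is constant in $x$, the cross-terms in $\partial_i\partial_j(u^iu^j)$ vanish, and the pressure equation \eqref{pressure_equ} for the weak limit rewrites as
\[ -\Delta p = \partial_i\partial_j(v^iv^j) \quad \text{in } B_r, \text{ a.e. } t. \]
Define $\tilde p(\cdot,t)$ on $\R^4$ by the Riesz representation $-\Delta\tilde p=\partial_i\partial_j(v^iv^j\boldsymbol{1}_{B_r})$, and put $h:=p-\tilde p$, which is harmonic in $B_r$ for a.e.\ $t$.

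For the local part, Calderón--Zygmund theory yields the pointwise-in-time bound $\|\tilde p(t)\|_{L^{3/2}(\R^4)}^{3/2}\leq C\|v(t)\|_{L^3(B_r)}^3$. Integrating in $t$ and invoking the second (and third) inequality of \Cref{interpolation}, we obtain
\[ \iint_{Q_r}|\tilde p|^{3/2}\,dxdt \leq C\iint_{Q_r}|u-\bar u|^3\,dxdt = C r^{3}H(r) \leq CC_1 r^{3} A^{1/2}(r)\delta(r). \]
Since $Q_{\theta r}\subset Q_r$, dividing by $(\theta r)^3$ produces the first term in \eqref{pressurepreestimate_2_eq0}. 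For the harmonic part, the mean-value property combined with Hölder gives, for $\theta\leq 1/2$ and a.e.\ $t$,
\[ \sup_{B_{\theta r}}|h(t)|^{3/2} \leq C r^{-4}\,\|h(t)\|_{L^{3/2}(B_r)}^{3/2}. \]
Multiplying by $|B_{\theta r}|\sim(\theta r)^{4}$ in space and integrating over $t\in(-(\theta r)^2,0)$ yields $\iint_{Q_{\theta r}}|h|^{3/2}\,dxdt\leq C\theta^{4}\iint_{Q_r}|h|^{3/2}\,dxdt$. Using $|h|^{3/2}\lesssim|p|^{3/2}+|\tilde p|^{3/2}$ together with the bound above for $\tilde p$,
\[ (\theta r)^{-3}\iint_{Q_{\theta r}}|h|^{3/2}\,dxdt \leq C\theta\, K(r) + C\theta\, C_1 A^{1/2}(r)\delta(r). \]
Since $\theta\leq 1/2$, the subordinate term $C\theta C_1 A^{1/2}\delta$ is absorbed into the $C_1C_3\theta^{-3}A^{1/2}\delta$ term, and \eqref{pressurepreestimate_2_eq0} follows with an appropriate absolute constant $C_3$.

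The crucial step is the passage from $\partial_i\partial_j(u^iu^j)$ to $\partial_i\partial_j(v^iv^j)$ using the divergence-free condition on $u$: without this reduction, Calderón--Zygmund would only yield a bound by $r^3G(r)$, rather than the much sharper $r^3C_1 A^{1/2}(r)\delta(r)$ that is required to reproduce the CKN-type iteration. Everything else is routine: a local-plus-harmonic splitting, Calderón--Zygmund boundedness on $L^{3/2}$, and the mean-value decay for harmonic functions in four spatial dimensions.
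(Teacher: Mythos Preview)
Your proof is correct. Both you and the paper exploit the same key reduction: using $\operatorname{div}u=0$ to rewrite $-\Delta p=\partial_i\partial_j\big[(u_i-\tilde u_{i,r})(u_j-\tilde u_{j,r})\big]$, which is precisely what allows the Calder\'on--Zygmund piece to be bounded by $H(r)\le C_1A^{1/2}(r)\delta(r)$ rather than merely by $G(r)$. The difference lies only in how the far-field piece is isolated. The paper localizes with an explicit cutoff $\psi\in C_c^\infty(B_r)$, $\psi\equiv 1$ on $B_{3r/4}$, and decomposes $p\psi=p_1+p_2+p_3$ into a CZ term, a boundary term in $u$, and a boundary term in $p$; the latter two are estimated pointwise via the support of $\nabla\psi$ in the annulus $B_r\setminus B_{3r/4}$. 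You instead extend $v^iv^j$ by zero outside $B_r$, let $\tilde p$ be the resulting Riesz potential, and observe that $h=p-\tilde p$ is harmonic in $B_r$, so the mean-value property gives the $\theta^4$ decay directly. Your route (in the spirit of Lin's simplification) is cleaner---two pieces rather than three, no cutoff bookkeeping---while the paper's explicit cutoff computation is heavier but makes every constant concrete. Both yield identical powers of $\theta$ and the same final inequality.
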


\begin{proof}
Again it suffices to prove the estimate for $(x_0,t_0)=(0,0)$. Choose a cutoff function $\psi \in C_c^{\infty}(\R^4)$ such that $0 \leq \psi \leq 1$ and
\begin{equation} \label{pressurepreestimate_2_eq1}
  \begin{split}
  \psi \equiv 1 \text{ in } B_{3r/4}, \quad
  \psi \equiv 0 \text{ in }  \R^4 \backslash B_r, \quad
  |\nabla \psi| \lesssim r^{-1}, \quad
  |\nabla^2 \psi| \lesssim r^{-2}.
  \end{split}
\end{equation}
The pressure equation can be written as
\[ -\Delta p = \partial_i \partial_j \big[ (u_i-\tilde{u}_{i,r})(u_j-\tilde{u}_{j,r}) \big]. \]
We can localize this equation like \eqref{pressurepreestimate_not1} and \eqref{pressurepreestimate_not2} to obtain
\[
  \begin{split}
  p(x,t)\psi(x) &= p_1(x,t) + p_2(x,t) + p_3(x,t), \\
  p_1(x,t) &= \frac{1}{4\pi^2} \int_{B_r} (u_i-\tilde{u}_{i,r})(u_j-\tilde{u}_{j,r}) \psi 
    \partial_i \partial_j \Big( \frac{1}{|x-y|^2} \Big) dy, \\
  p_2(x,t) &= \frac{1}{4\pi^2} \int_{B_r} (u_i-\tilde{u}_{i,r})(u_j-\tilde{u}_{j,r}) 
    \Big( \frac{\partial_i \partial_j \psi}{|x-y|^2} + \partial_j \psi \frac{4(x_i-y_i)}{|x-y|^4} \Big) dy, \\
  p_3(x,t) &= \frac{1}{4\pi^2} \int_{B_r} p \Big( \frac{\Delta \psi}{|x-y|^2} 
                      + \frac{4(x-y) \cdot \nabla \psi}{|x-y|^4} \Big) dy.
  \end{split}
\] \par
For $p_1$, Calder\'on-Zygmund theory yields
\[ \int_{B_{\theta r}} |p_1|^{3/2} dx \lesssim \int_{B_r} |(u_i-\tilde{u}_{i,r})(u_j-\tilde{u}_{j,r})|^{3/2} dx 
  \lesssim \int_{B_r} |u-\tilde{u}_r|^3 dx. \]
Integrating in time gives
\[ \iint_{Q_{\theta r}} |p_1|^{3/2} dxdt \lesssim \iint_{Q_r} |u-\tilde{u}_r|^3 dxdt. \] \par
For $p_2$, note that $\nabla \psi$ is supported in $B_r \backslash B_{3r/4}$. Then for $x \in B_{\theta r}$, $|x-y|>\frac{r}{4}$ and the bounds in \eqref{pressurepreestimate_2_eq1} give
\[ |p_2| \lesssim r^{-4} \int_{B_r} |u-\tilde{u}_r|^2 dx. \]
Then integrate in $Q_{\theta r}$ to obtain
\[ \iint_{Q_{\theta r}} |p_2|^{3/2} dxdt 
  \lesssim \int_{-(\theta r)^2}^0 (\theta r)^4 \|p_2\|^{3/2}_{L^{\infty}(B_{\theta r})} dt
  \lesssim \theta^4 \iint_{Q_r} |u-\tilde{u}_r|^3 dxdt. \] \par
For $p_3$, likewise, we have
\[ \iint_{Q_{\theta r}} |p_3|^{3/2} dxdt \lesssim \theta^4 \iint_{Q_r} |p|^{3/2} dxdt. \] \par
Combining the estimates for $p_1,p_2$ and $p_3$ and applying the interpolation inequality \Cref{interpolation}, we have
\[ K(\theta r) \lesssim C_1 \theta^{-3} A^{1/2}(r) \delta(r) + C_3 \theta K(r), \]
as claimed.
\end{proof}

Two types of cutoff functions are introduced in following lemmas, respectively for two local partial regularity results which we will show later. Similar cutoff functions have been used by Scheffer \cite{scheffer1980navier} and Caffarelli, Kohn, and Nirenberg \cite{caffarelli1982partial}.
\begin{lemma} \label{backwardheat}
Let $r_n = 2^{-n}$ and $Q_n = Q_{r_n}$. In space dimension $4$, $\{ \phi_n \}_{n \in \N}$ is a sequence of localized solutions of backward heat equations given by
\[ \phi_n(x,t) = \chi(x,t) \Psi_n(x,t) \quad (x,t) \in \R^4 \times (-\infty, 0), \]
where $\{\Psi_n\}_{n \in \N}$ are the solutions of backward heat equations given by
\[ \Psi_n(x,t) = \frac{1}{(r_n^2-t)^2} \exp \Big( -\frac{|x|^2}{4(r_n^2-t)} \Big) \]
and $\chi$ is a cut-off function such that
\[
  \begin{split}
  \chi \equiv 1 \text{ in } Q_{1/4}, \quad
  \chi \equiv 0 \text{ in } \R^4 \times (-\infty, 0) \backslash Q_{1/3},
  \end{split}
\]
then the following statements hold for any integer $n \in \N$:

\begin{enumerate}
  \item $\partial_t\phi_n + \Delta \phi_n = 0$ in $Q_{1/4}$;
  \item $|\partial_t\phi_n + \Delta \phi_n| \leq C_4$ in $\R^4 \times (-\infty, 0)$;
  \item $C_4^{-1}r^{-4}_n \leq \phi_n \leq C_4 r^{-4}_n$ and $|\nabla \phi_n| \leq C_4r_n^{-5}$ in $Q_n$;
  \item $ \phi_n \leq C_4r_k^{-4} $ and $ | \nabla \phi_n | \leq C_4r_k^{-5} $ in $Q_{k-1} \backslash Q_k$ for any $2 \leq k \leq n$.
\end{enumerate}
Note that the constant $C_4>0$ is absolute.
\end{lemma}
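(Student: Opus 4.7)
The strategy is to recognize $\Psi_n$ as a shifted multiple of the forward heat kernel on $\R^4$ and to reduce all four assertions to explicit Gaussian estimates, treating the cutoff $\chi$ via the product rule.

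A direct differentiation shows $\Psi_n(x,t) = 16\pi^2 K(x, r_n^2 - t)$, where $K$ is the standard forward heat kernel on $\R^4$, so $\partial_t \Psi_n + \Delta \Psi_n = 0$ on $\R^4 \times (-\infty, r_n^2)$. Since $\chi \equiv 1$ on $Q_{1/4}$, this immediately yields (1). For (2) I use the product-rule identity
\[ \partial_t \phi_n + \Delta \phi_n = \Psi_n(\partial_t \chi + \Delta \chi) + 2 \nabla \chi \cdot \nabla \Psi_n, \]
whose right-hand side is supported in the fixed transition region $Q_{1/3} \setminus Q_{1/4}$. On this region one has either $|x| \geq 1/4$ or $|t| \geq 1/16$, so $s := r_n^2 - t$ and $|x|/\sqrt{s}$ remain in a range on which $\Psi_n$ and $|\nabla \Psi_n|$ are bounded by absolute constants independent of $n$.

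For (3), I take $n$ large enough that $Q_n \subset Q_{1/4}$ (so $\phi_n = \Psi_n$ on $Q_n$); the finitely many small values of $n$ are absorbed into $C_4$. For $(x,t) \in Q_n$ one has $s \in [r_n^2, 2r_n^2]$ and $|x|^2/(4s) \leq 1/4$, giving the two-sided bound $\tfrac{1}{4}e^{-1/4} r_n^{-4} \leq \Psi_n \leq r_n^{-4}$ and the gradient estimate $|\nabla \Psi_n| = (|x|/2s)\Psi_n \leq \tfrac{1}{2} r_n^{-5}$.

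Part (4) is the main step, because the bounds must involve $r_k$ rather than $r_n$, uniformly in $k \leq n$. For $(x,t) \in Q_{k-1} \setminus Q_k$ I split into two cases. If $t \leq -r_k^2$, then $s \geq r_k^2$ and $|x| \leq r_{k-1} = 2r_k$ give immediately $\Psi_n \leq s^{-2} \leq r_k^{-4}$ and $|\nabla \Psi_n| \leq |x|/(2s^3) \leq r_k^{-5}$. If instead $|x| > r_k$ (with $t > -r_k^2$), I maximize in $s > 0$ with $a=|x|$ fixed via the elementary identities
\[ \sup_{s>0} s^{-2} e^{-a^2/(4s)} = 64 e^{-2} a^{-4}, \qquad \sup_{s>0} s^{-3} e^{-a^2/(4s)} = 1728 e^{-3} a^{-6}, \]
applied with $a = |x| > r_k$, obtaining $\Psi_n \lesssim r_k^{-4}$ and $|\nabla \Psi_n| = (|x|/2) s^{-3} e^{-|x|^2/(4s)} \lesssim r_k^{-5}$. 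Finally, the contribution of $\Psi_n \nabla \chi$ to $\nabla \phi_n$ is supported in $Q_{1/3} \setminus Q_{1/4}$, a set which intersects $Q_{k-1} \setminus Q_k$ only for $k = 2$, where $r_k$ is an absolute constant; this contribution is bounded by an absolute constant and absorbed into $C_4 r_k^{-5}$. The principal technical obstacle throughout is maintaining independence of all constants from $n$, which forces the use of the supremum-in-$s$ bounds above rather than pointwise estimates at a specific $s$.
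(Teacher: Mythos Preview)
Your proposal is correct and follows essentially the same route as the paper: both proofs use the product-rule identity for $\partial_t\phi_n+\Delta\phi_n$ to dispatch (1)--(2), the range $r_n^2\le r_n^2-t\le 2r_n^2$ on $Q_n$ for (3), and the same two-case split on $t\le -r_k^2$ versus $|x|>r_k$ for (4). Your explicit suprema $\sup_{s>0}s^{-m}e^{-a^2/(4s)}=C_m a^{-2m}$ are exactly the paper's observation that $\alpha^{2m}e^{-\alpha^2/4}$ is bounded, just written in different variables.
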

\begin{proof}
The first statement is obvious. For the second we compute
\[
  \begin{split}
  \partial_t\phi_n + \Delta \phi_n 
  &= \Psi_n(\partial_t\chi + \Delta \chi) + \chi(\partial_t\Psi_n + \Delta \Psi_n) 
    + 2 \nabla \Psi_n \cdot \nabla \chi \\
  &= \Psi_n(\partial_t\chi + \Delta \chi) + 2 \nabla \Psi_n \cdot \nabla \chi.
  \end{split}
\]
Because any derivative of $\chi$ vanishes in $Q_{1/4}$ and $\Psi_n, \nabla \Psi_n$ are uniformly bounded in $(x,t) \in \R^4 \times (-\infty, 0) \backslash Q_{1/4}$, we can deduce that $|\partial_t\phi_n + \Delta \phi_n|$ is bounded uniformly in $(x,t) \in \R^4 \times (-\infty, 0)$ and in $n \in \N$. \par
For the third, if $(x,t) \in Q_n$, then $r_n^2 \leq r_n^2-t \leq 2r_n^2$ and $|x|^2 \leq r_n^2$. We compute
\[  \nabla \phi_n(x,t) = 
    \Big( \frac{\nabla \chi(x,t)}{(r_n^2-t)^2} - \frac{x \chi(x,t)}{2(r_n^2-t)^3} \Big)
    \exp \Big( -\frac{|x|^2}{4(r_n^2-t)} \Big).  \]
The terms $\chi,\nabla \chi$ and $\exp \Big( -\frac{|x|^2}{4(r_n^2-t)} \Big)$ are bounded from above and from below uniformly in $(x,t) \in Q_n$ and in $n \in \N$. Then the third statement follows from
\[ \frac{1}{(r_n^2-t)^{2}} \leq r_n^{-4} \quad \frac{|x|}{(r_n^2-t)^3} \leq r_n^{-5}. \]
\par
For the fourth, if $(x,t) \in Q_{k-1} \backslash Q_k$ and $t \leq -r_k^2$, we have $|x|^2 \leq r_{k-1}^2$ and $r_n^2 - t \geq r_n^2+r_k^2$, then this statement follows from the argument for the third one. If $(x,t) \in Q_{k-1} \backslash Q_k$ and $t > -r_k^2$, then $r_k^2 \leq |x|^2 \leq r_{k-1}^2$ and $r_n^2 \leq r_n^2-t \leq r_n^2+r_k^2$, thus
\[ \phi_n(x,t) \leq \frac{\chi}{(r_n^2-t)^2} \exp \Big( -\frac{r_k^2}{4(r_n^2-t)} \Big) \leq \chi r_k^{-4} \alpha^4 e^{-\alpha^2/4}, \]
where $\alpha = r_k(r_n^2-t)^{-1/2}$ and the function $\alpha^4 e^{-\alpha^2/4}$ is uniformly bounded. The bound for $\nabla \phi_n$ follows similarly.
\end{proof}

\begin{lemma} \label{alter_backwardheat}
In space dimension $4$, fix $r>0$. For any $0 < \theta \leq \frac{1}{2}$ we define
\[ \phi_{\theta}(x,t) = \frac{1}{[(\theta r)^2-t]^2} 
  \exp\Big( -\frac{|x|^2}{4[(\theta r)^2-t]} \Big) \chi \Big( \frac{x}{r},\frac{t}{r^2} \Big)
  \quad (x,t) \in \R^4 \times (-\infty, 0), \]
where $\chi \in C^{\infty}_c(B_1 \times (-1,1))$ is a cutoff function such that $\chi \equiv 1$ in $B_{1/2} \times (-\frac{1}{4},\frac{1}{4})$. Then there exists an absolute constant $C_5>0$ such that
\begin{enumerate}
  \item $C_5^{-1} (\theta r)^{-4} \leq \phi_{\theta} \leq C_5 (\theta r)^{-4}$ in $Q_{\theta r}$;
  \item In $Q_r$, we have following bounds,
  \[ 
    \begin{split}
    \phi_{\theta} &\leq C_5 (\theta r)^{-4}, \\
    |\nabla \phi_{\theta}| &\leq C_5 (\theta r)^{-5}, \\
    |\partial_t \phi_{\theta} + \Delta \phi_{\theta}| &\leq C_5 r^{-6}.
    \end{split}
  \]
\end{enumerate}
\end{lemma}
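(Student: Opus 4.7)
The key observation is that
\[
\Psi_\theta(x,t) := [(\theta r)^2 - t]^{-2} \exp\bigl(-|x|^2/(4[(\theta r)^2 - t])\bigr)
\]
is, up to a multiplicative constant, the standard backward heat kernel in $\R^4$ with singularity at $(0,(\theta r)^2)$, so $\partial_t \Psi_\theta + \Delta \Psi_\theta = 0$ on $\R^4 \times (-\infty, 0)$. The proof therefore follows the same pattern as \Cref{backwardheat}, with the cutoff $\chi(x/r, t/r^2)$ playing the role previously played by $\chi$.

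For item (1), since $\theta \leq 1/2$ the parabolic cylinder $Q_{\theta r}$ is contained in $B_{r/2} \times (-r^2/4, 0)$, on which $\chi(\cdot/r, \cdot/r^2) \equiv 1$, so $\phi_\theta = \Psi_\theta$ there. Moreover on $Q_{\theta r}$ one has $(\theta r)^2 \leq (\theta r)^2 - t \leq 2(\theta r)^2$ and $|x|^2/(4[(\theta r)^2 - t]) \leq 1/4$, so the two-sided bound is immediate from the explicit formula.

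For item (2), I will establish the three bounds in turn. The pointwise bound $\phi_\theta \leq C_5(\theta r)^{-4}$ follows from $\chi \leq 1$ together with the elementary inequality $s^{-2} e^{-|x|^2/(4s)} \leq (\theta r)^{-4}$, valid whenever $s := (\theta r)^2 - t \geq (\theta r)^2$. For $|\nabla \phi_\theta|$, the product rule yields two terms: $\Psi_\theta \nabla\chi(\cdot/r, \cdot/r^2)$, which is $O(r^{-1}(\theta r)^{-4}) = O((\theta r)^{-5})$ after noting $r^{-1} \leq (\theta r)^{-1}$; and $\chi \nabla \Psi_\theta$, for which the substitution $y = |x|/(2\sqrt{s})$ yields a constant multiple of $s^{-5/2} y e^{-y^2}$, bounded by $C(\theta r)^{-5}$ since $s \geq (\theta r)^2$ and $y e^{-y^2}$ is bounded uniformly in $y \geq 0$.

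The delicate step is the bound $|\partial_t \phi_\theta + \Delta \phi_\theta| \leq C_5 r^{-6}$. Since $\Psi_\theta$ solves the backward heat equation,
\[
\partial_t \phi_\theta + \Delta \phi_\theta = \Psi_\theta(\partial_t + \Delta)\chi(\cdot/r, \cdot/r^2) + 2\,\nabla \Psi_\theta \cdot \nabla\chi(\cdot/r, \cdot/r^2),
\]
and the derivatives of $\chi(\cdot/r, \cdot/r^2)$ contribute factors of order $r^{-2}$ and $r^{-1}$ respectively and vanish on $B_{r/2} \times (-r^2/4, 0)$. On the support of these derivatives inside $Q_r$, either $t \leq -r^2/4$, in which case $s \geq r^2/4$ immediately yields $\Psi_\theta \lesssim r^{-4}$ and $|\nabla \Psi_\theta| \lesssim r^{-5}$; or $t > -r^2/4$ with $|x| \geq r/2$, in which case the substitution $a := r^2/(16s)$ gives $\Psi_\theta \leq s^{-2} e^{-a} = 256\, r^{-4} a^2 e^{-a} \lesssim r^{-4}$, and an analogous calculation yields $|\nabla \Psi_\theta| \lesssim r^{-5}$. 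Combining with the factors $r^{-2}$ and $r^{-1}$ from the derivatives of $\chi$, both terms are $O(r^{-6})$, as required.
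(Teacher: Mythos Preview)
Your proof is correct and follows exactly the approach the paper intends: the paper's own proof consists of the single sentence ``This proof is analogue to the proof of \Cref{backwardheat},'' and you have carried out precisely that analogy, with the same decomposition $\partial_t\phi_\theta+\Delta\phi_\theta=\Psi_\theta(\partial_t+\Delta)\chi+2\nabla\Psi_\theta\cdot\nabla\chi$ and the same case split on the support of the derivatives of $\chi$.
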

\begin{proof}
This proof is analogue to the proof of \Cref{backwardheat}.
\end{proof}

These estimates will be fundamental for the local partial regularity results of the Navier-Stokes equations in space dimension 4. They involve some constants $C_1,C_2,C_3,C_4$ and $C_5$. All of these constants are absolute.

\subsection{Partial regularity results}

The first partial regularity result states that $u$ is locally bounded if $u,p,f$ and concentration measure $\omega$ satisfy a local smallness condition. This result is a version of Proposition 1 in \cite{caffarelli1982partial} in space dimension 4 with concentration measures.

\begin{proposition} \label{cknprop}
There exist an absolute constant $\varepsilon >0$ and, for any fixed $q>3$, constants $\kappa = \kappa(\varepsilon,q)$ and $C=C(\varepsilon,q)$ depending on $\varepsilon$ and $q$ with the following property. If a weak solution set $(u,p,\lambda,\omega)$ of the Navier-Stokes equations \eqref{navierstokes} in $Q_1(0,0)$ in space dimension $4$ satisfies
\begin{equation} \label{smallness}
  \begin{split}
  \iint_{Q_1} \Big( |u|^3 + |p|^{3/2} \Big) dxdt + \iint_{Q_1} d\omega &\leq \varepsilon \\
  \iint_{Q_1}  |f|^{q} dxdt &\leq \kappa,
  \end{split}
\end{equation}
then $ \|u\|_{ L^{\infty}(Q_{1/2}(0,0)) } < C $.
\end{proposition}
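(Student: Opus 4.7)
The plan is to adapt the iteration scheme of Caffarelli--Kohn--Nirenberg to space dimension $4$ with concentration measures $\lambda,\omega$. Fix $(x_0,t_0) \in Q_{1/2}(0,0)$ and work on the dyadic cylinders $Q_{r_n}(x_0,t_0)$, $r_n = 2^{-n}$. The target is to show that under \eqref{smallness}, the scale-invariant quantity $\Phi(r) := A(r) + \delta(r) + \delta_c(r) + G(r) + G_c(r) + K(r)$ stays uniformly small at all sufficiently small scales, uniformly in the base point $(x_0,t_0)\in Q_{1/2}$. Once this is established, viewing \eqref{navierstokes} as a linear Stokes system with drift $u\in L^\infty_tL^2_x\cap L^2_tL^4_x$ and source $f\in L^q$, $q>3$, a standard parabolic bootstrap yields H\"older continuity and hence the claimed $L^\infty$ bound on $Q_{1/2}$.

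For the main iteration I test the local energy inequality \eqref{evo_local_energy_1} against the backward heat kernel cutoff $\phi_n$ from \Cref{backwardheat}, translated to $(x_0,t_0)$. To keep the pressure contribution sharp I decompose $\phi_n=\sum_{k=1}^n \phi_n\eta_k$ with $\{\eta_k\}$ a smooth partition of unity subordinate to the dyadic annuli $Q_{k-1}\setminus Q_k$, and choose $\gamma_k(t)=\tilde p_{r_k,x_0}(t)$ so the $k$-th pressure piece is of type $L(r_k)$. The pointwise bounds of \Cref{backwardheat} make the left side of \eqref{evo_local_energy_1} dominate $r_n^{-2}\bigl[A(r_n)+\delta(r_n)+\delta_c(r_n)\bigr]$, while each right-hand term collapses to a weighted dyadic sum; multiplying by $r_n^2$ and using $4^{k-n}$ as the weight yields the schematic estimate
\[
A(r_n)+\delta(r_n)+\delta_c(r_n) \lesssim \sum_{k=1}^n 4^{k-n}\bigl[G(r_k)+G_c(r_k)+L(r_k)\bigr] + \varepsilon + \kappa^{1/q},
\]
where the $\varepsilon$ absorbs the term coming from $|u|^2|\partial_t\phi_n+\Delta\phi_n|$ (supported outside $Q_{1/4}$) and the $\kappa^{1/q}$ absorbs $\iint f\cdot u\,\phi_n$ via H\"older together with the embedding $L^\infty_tL^2_x\cap L^2_tH^1_x \hookrightarrow L^3_{t,x}$ in dimension $4$.

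To close the system I combine this with the interpolation bounds of \Cref{interpolation} and the pressure estimates of \Cref{pressurepreestimate,pressurepreestimate_2}. The interpolations give $G\lesssim A^{3/2}+\delta^{3/2}$, $G_c\lesssim A^{1/2}\delta_c$ and $H\lesssim A^{1/2}\delta$, so each of $G,G_c,L$ is at least quadratic in $\Phi$ on small cylinders. \Cref{pressurepreestimate_2} gives $K(\theta r)\le C\theta^{-3}A^{1/2}(r)\delta(r) + C\theta K(r)$; picking $\theta$ small absorbs the last term, while the tail $\sup_t\int_{|y-x_0|>2r}|u|^2|y-x_0|^{-5}dy$ from \Cref{pressurepreestimate} is controlled by $\sup_{s\le 1}A(s)$ and the initial smallness. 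The combined recursion has the form
\[
\Phi(r_n)\le C_0\Phi(r_n)^{3/2} + C_0\sum_{k\le n}4^{k-n}\Phi(r_k) + C_0(\varepsilon+\kappa^{1/q}),
\]
which under \eqref{smallness} is a contractive iteration, and induction on $n$ yields $\Phi(r_n)\le M_0(\varepsilon^{2/3}+\kappa^{1/q})$ uniformly in $n$ and in $(x_0,t_0)\in Q_{1/2}$.

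The principal obstacle is the coupling between the concentration measures and the pressure: naive localisation of the pressure equation leaks $\omega$-mass into the energy inequality through \Cref{pressure_convergence_nonsta}, and this must be reabsorbed by the partition-of-unity trick combined with the sharp $L$-type quantities. A secondary delicate point is verifying that the dyadic weight $4^{k-n}$ (rather than the $2^{k-n}$ of the 3D case) is the correct one for the 4D backward heat kernel and still produces a convergent geometric series; once this is in hand, the rest of the argument parallels \cite{caffarelli1982partial} essentially verbatim.
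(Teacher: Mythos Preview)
Your overall architecture---CKN iteration at dyadic scales, the backward heat cutoff $\phi_n$ from \Cref{backwardheat}, the partition $\phi_n=\sum_k\varphi_k$ with $\gamma_k=\tilde p_{r_k}$ to localise the pressure---matches the paper exactly, and your computation of the weight $4^{k-n}$ is correct. The gap is in what you iterate \emph{towards}, and how you conclude.

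You aim to show that the scale-invariant package $\Phi(r_n)=A+\delta+\delta_c+G+G_c+K$ stays \emph{uniformly small}, and then appeal to a ``standard parabolic bootstrap'' for a linear Stokes system with drift $u\in L^\infty_tL^2_x\cap L^2_tL^4_x$. This last step does not work as stated: in space dimension $4$ the drift class needed for linear parabolic regularity is $L^p_tL^q_x$ with $2/p+4/q\le1$ (critically $L^6_{t,x}$), and $L^\infty_tL^2_x\cap L^2_tL^4_x\subset L^3_{t,x}$ is strictly supercritical. Uniform smallness of $A(r)$ only says $r^{-2}\int_{B_r}|u|^2\le M_0$, which is a Morrey condition two orders short of $L^\infty$; at best it places $u$ in a critical Morrey space, and any bootstrap from there is delicate, not standard, and would have to be argued separately (and with the concentration measure $\omega$ in play).

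The paper avoids this entirely by iterating to \emph{decay}: the induction hypotheses are
\[
G(r_n)+G_c(r_n)+L'(r_n)\le \varepsilon^{2/3}r_n^{3},\qquad A(r_n)+\delta(r_n)+\delta_c(r_n)\le C_B\,\varepsilon^{2/3}r_n^{2},
\]
with genuine powers of $r_n$ on the right. The second bound gives $r_n^{-4}\int_{B_{r_n}}|u|^2\le C_B\varepsilon^{2/3}$ directly, hence $|u(x_0,t_0)|\le C_B^{1/2}\varepsilon^{1/3}$ at every Lebesgue point---no linear theory needed. To make this decay close, the paper uses the \emph{non}-scale-invariant pressure quantity $L'(r)=r^{1/2}L(r)$ together with \Cref{pressurepreestimate}; as the Remark after the proof explains, running the same estimate with $L$ instead of $L'$ produces a term of order $\varepsilon\,r_n^{5/2}$ in Claim~3, which cannot be absorbed into $\varepsilon^{2/3}r_n^{3}$ uniformly in $n$. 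Your scheme uses $L$ and targets only boundedness, so you never see this obstruction---but you also never reach $L^\infty$. The fix is to replace your uniform-smallness target by the decay hypotheses above, carry $L'$ rather than $L$, and drop the bootstrap entirely.
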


\begin{proof}
Let $r_n = 2^{-n}$ and $Q_n = Q_{r_n}$, $n \geq 2$. The strategy is to iteratively prove the following estimates
\begin{equation} \label{ainduction}
G(x_0,t_0,r_n) + G_c(x_0,t_0,r_n) + L'(x_0,t_0,r_n) \leq \varepsilon^{2/3} r_n^3
\end{equation}
\begin{equation} \label{binduction}
A(x_0,t_0,r_n) + \delta (x_0,t_0,r_n) + \delta_c (x_0,t_0,r_n) \leq C_B \varepsilon^{2/3} r_n^2
\end{equation}
for all $n \in \N$. We use $\sum^n_{k=1} A(x_0,t_0,r_k)$, $\sum^n_{k=1} \delta(x_0,t_0,r_k)$ and $\sum^n_{k=1} \delta_c(x_0,t_0,r_k)$ to control $G(x_0,t_0,r_{n+1})$, $G_c(x_0,t_0,r_{n+1})$ and $L'(x_0,t_0,r_{n+1})$ by means of the interpolation inequalities in \Cref{interpolation} and the estimate for the pressure $p$ in \Cref{pressurepreestimate}. Conversely, we bound $A(x_0,t_0,r_{n+1})$, $\delta(x_0,t_0,r_{n+1})$ and $\delta_c(x_0,t_0,r_{n+1})$ through $\sum^n_{k=1} G(x_0,t_0,r_k)$, $\sum^n_{k=1} G_c(x_0,t_0,r_k)$ and $\sum^n_{k=1} L'(x_0,t_0,r_k)$, by means of the local energy inequality \eqref{evo_local_energy_1}. 

For any $(x_0,t_0) \in Q_{1/2}(0,0)$, we will prove \eqref{ainduction} and \eqref{binduction} inductively. In the rest of this proof, we use $\eqref{ainduction}_k$ to denote the inequality \eqref{ainduction} with index $k \in \N$. This notation also applies to \eqref{binduction}.
\vspace{2mm}\\
\noindent 
\textbf{Claim 1:} \textit{The inequality $\eqref{ainduction}_1$ holds.} \par
\begin{proof}[Proof of Claim 1]
H{\"o}lder's inequality gives
\[ 
  \begin{split}
  G(x_0&, t_0, r_1) + G_c(x_0, t_0, r_1) + L(x_0, t_0, r_1) \\
  &\leq 8 \iint_{Q_{1/2}(x_0,t_0)} |u|^3 dxdt + 8 \iint_{Q_{1/2}(x_0,t_0)} d\omega +
    16 \iint_{Q_{1/2}(x_0,t_0)} |p|^{3/2} dxdt.
  \end{split}
\]
Then we impose the first condition on $\varepsilon>0$,
\begin{equation} \label{choseepsilon1}
  \varepsilon \leq 2^{-21}
\end{equation}
Now we can invoke initial smallness condition and it yields
\[ G(x_0, t_0, r_1) + G_c(x_0, t_0, r_1) + L(x_0, t_0, r_1) \leq 16 \varepsilon \leq \varepsilon^{2/3} r_1^3. \]
\end{proof}
\vspace{2mm}
\noindent 
\textbf{Claim 2:} \textit{$\{\eqref{ainduction}_k\}_{1 \leq k \leq n}$ implies $\eqref{binduction}_{n+1}$.} \par

\begin{proof}[Proof of Claim 2]

Let $\phi_n$ be the localized solution of the backward heat equation
\[ \phi_n(x,t) = \frac{\chi(x,t)}{(r_n^2-t)^{2}} \exp \Big( -\frac{|x|^2}{4(r_n^2-t)} \Big) \]
with $\chi$ as given in \Cref{backwardheat}. Define smooth cutoff functions $\{ \eta_k \}_{k \in \N}$ such that
\[
  \begin{split}
  \eta_k \equiv 1 \text{ in } Q_{7r_k/8}, \quad
  \eta_k \equiv 0 \text{ in } \R^4 \times (-\infty, 0) \backslash Q_k, \quad
  |\nabla \eta_k| \leq C' r_k^{-1}.
  \end{split}
\]
Then define $\varphi_k := \phi_n(\eta_k-\eta_{k+1})$ for $1\leq k \leq n-1$ and $\varphi_n:=\phi_n \eta_n$. It is easy to check the bound
\begin{equation} \label{extrabound}
  |\nabla \varphi_k| = |\phi_n \nabla \eta_k + \eta_k \nabla \phi_n| \leq C_4 C' r_k^{-5}
  \quad \text{for any }k \leq n
\end{equation}
and the fact $\phi_n = \sum_{k=1}^{n} \varphi_k$.
\par
We use $\phi_n$ as the cutoff function in the local energy inequality \eqref{evo_local_energy_1} and choose the functions $\gamma_k=\tilde{p}_{r_k}$. This yields
\begin{equation}
  \begin{split}
  \limsup_{ k \rightarrow \infty} \sup_t \int_{B_{1/2}} |u_k|^2 \phi_n dx + \iint_{Q_{1/2}} \phi_n &\big( |\nabla u|^2 dxdt + d\lambda \big) \\
     &\leq I_1 + 3I_2 + 2I_3 +I_4,
  \end{split}
\end{equation}
where
\[
  \begin{split}
  I_1 &= \iint_{Q_{1/2}}  |u|^2 |\partial_t \phi_n + \Delta \phi_n| dxdt \\
  I_2 &= \sum_{k=1}^n \iint_{Q_{1/2}} |\nabla \varphi_k| \big( |u|^3 dxdt + d\omega \big) \\
  I_3 &= \sum_{k=1}^n \iint_{Q_{1/2}} |\nabla \varphi_k| |p-\tilde{p}_{r_k}|^{3/2} dxdt \\
  I_4 &= \iint_{Q_{1/2}}  |u| |f| |\phi_n| dxdt.
  \end{split}
\]
With the bounds in \Cref{backwardheat}, we can deduce
\begin{equation} \label{claim2eq1}
C_4^{-1} r_{n+1}^{-2} \big(A(r_{n+1}) + \delta(r_{n+1}) + \delta_c(r_{n+1})\big) \leq I_1 + 3I_2 + 2I_3 + I_4
\end{equation}
\par
For $I_1$, we use the bounds in \Cref{backwardheat}, H{\"o}lder's inequality and the initial smallness condition \eqref{smallness},
\[
  \begin{split}
  I_1 & \leq C_4 \iint_{Q_{1/2}}  |u|^2 dxdt
      \leq C_4 \Big( \iint_{Q_{1/2}} 1 dxdt \Big)^{1/3} \Big( \iint_{Q_{1/2}} |u|^3 dxdt \Big)^{2/3} \\
    & \leq C_4 \Big( \iint_{Q_1} |u|^3 dxdt \Big)^{2/3}
      \leq C_4 \varepsilon^{2/3}
  \end{split}.
\]
\par
For $I_2$, we need to decompose the integral over $Q_{1/2}$ into integrals over parabolic rings. Then for each subintegral we use the bounds in \Cref{backwardheat} and our induction hypothesis $\{\eqref{ainduction}_k\}_{1 \leq k \leq n}$ to obtain
\[  
  \begin{split}
  I_2 &= \sum^n_{k=2} \iint_{Q_{k-1} \backslash Q_k} | \nabla \varphi_k| \big( |u|^3dxdt + d\omega \big)
      + \iint_{Q_n} |\nabla \varphi_k| \big( |u|^3dxdt + d\omega \big) \\
    &\leq C_4C' \sum^n_{k=2} r_k^{-5} \iint_{Q_{k-1} \backslash Q_k} \big( |u|^3dxdt + d\omega \big)
      + C_4C' r_n^{-5} \iint_{Q_n} \big( |u|^3dxdt + d\omega \big) \\
    &\leq C_4C' 2^5 \sum^n_{k=2} r_{k-1}^{-5} \iint_{Q_{k-1}} \big( |u|^3dxdt + d\omega \big)
      + C_4C' r_n^{-5} \iint_{Q_n} \big( |u|^3dxdt + d\omega \big) \\
    &\leq C_4C' 2^5 \sum^n_{k=1} r_k \varepsilon^{2/3}.
  \end{split}
\]
\par
We estimate $I_4$ and $I_3$ in a similar way, doing the decomposition and using the bounds in \Cref{backwardheat}, H{\"o}lder's inequality, the initial smallness condition \eqref{smallness} and the induction hypothesis $\{\eqref{ainduction}_k\}_{1 \leq k \leq n}$,
\[
  \begin{split}
  I_4 \leq & \sum^n_{k=2} \iint_{Q_{k-1} \backslash Q_k} |u||f||\varphi_k| dxdt
      + \iint_{Q_n} |u||f||\varphi_k| dxdt  \\
    \leq & C_4C' \sum^n_{k=1} r_k^{-4} \Big( \iint_{Q_k} |u|^3 dxdt \Big)^{1/3} 
      \Big( \iint_{Q_k} |f|^q dxdt \Big)^{1/q} \Big( \iint_{Q_k} 1 dxdt \Big)^{2/3-1/q} \\
    \leq & C_4C' \sum^n_{k=1} r_k^{2-6/q} \varepsilon^{2/9} \kappa^{1/q},
  \end{split}
\]
\[
  \begin{split}
  I_3 
    \leq & C_4C'\sum^n_{k=2} r_k^{-5} \iint_{Q_{k-1} \backslash Q_k} |p-\tilde{p}_{r_k}|^{3/2} dxdt
      + C_4C' r_n^{-5} \iint_{Q_n} |p-\tilde{p}_{r_n}|^{3/2} dxdt\\
    \leq & C_4C' \sum_{k=1}^n r_k^{1/2} \varepsilon^{2/3},
  \end{split}
\]
respectively.
\par
Now we can combine the estimates for $I_1,I_2,I_3,I_4$ and the inequality \eqref{claim2eq1},
\[
  \begin{split}
  A&(r_{n+1}) + \delta(r_{n+1}) + \delta_c(r_{n+1})  \\
    &\leq C_4^2 r_{n+1}^2 \Big( \varepsilon^{2/3} 
      + 96C' \sum^n_{k=1} r_k \varepsilon^{2/3} + 2C' \sum_{k=1}^n r_k^{1/2} \varepsilon^{2/3} 
      + C' \sum^n_{k=1} r_k^{2-6/q} \varepsilon^{2/9} \kappa^{1/q} \Big).
  \end{split}
\]
Since $q>3$, we can choose the constants $C_B$ and $\kappa$ as
\[
  \begin{split}
  C_B &= C_4^2 \Big( 1
      + 96C' \sum^n_{k=1} r_k + 2C' \sum_{k=1}^n r_k^{1/2}
      + C' \sum^n_{k=1} r_k^{2-6/q} \Big), \\
  \kappa &= \varepsilon^{4q/9}.
  \end{split}
\]
Because $C'$ and $C_4$ are absolute, $C_B$ only depends on $q$. Then it yields
\[ A(r_{n+1}) + \delta (r_{n+1}) + \delta_c (r_{n+1}) \leq C_B \varepsilon^{2/3} r_{n+1}^2. \]
Since this argument and the constants $C_B,\kappa$ are uniform for all $(x_0,t_0) \in Q_{1/2}(0,0)$, we can deduce that $\eqref{binduction}_{n+1}$ holds.
\end{proof}
\vspace{2mm}
\noindent 
\textbf{Claim 3:} \textit{$\{\eqref{binduction}_k\}_{2 \leq k \leq n}$ implies $\eqref{ainduction}_n$.} \par
\begin{proof}[Proof of Claim 3]
For simplicity, let $(x_0,t_0)=(0,0)$. The interpolation inequality \Cref{interpolation} yields for any $2 \leq k \leq n$,
\begin{equation} \label{cknprop_c3_eq1}
  G(r_k) + G_c(r_k) \leq C_1 A^{3/2}(r_k) + C_1 \delta^{3/2}(r_k) + C_1 \delta_c^{3/2}(r_k) \leq C_1C_B \varepsilon r_k^3.
\end{equation}
Let $\rho=r_1, r=r_n$, \Cref{pressurepreestimate} yields
\begin{equation} \label{cknprop_c3_eq2}
  \begin{split}
  L'(r_n) \leq & C_2 r_n^{-5/2} \iint_{Q_{n-1}} |u|^3 dxdt 
    + C_2 r_n^5 \Big( \underset{-r_n^2<t<0}{\sup} \int_{2r_n<|y|<r_1} \frac{|u|^2}{|y|^5} dy \Big)^{3/2} \\
    & + C_2 \frac{r_n^3}{r_1^{11/2}} \iint_{Q_{r_1}} \big(|u|^3 + |p|^{3/2}\big) dxdt \\
  \leq& 8C_2 r_n^{1/2} G(r_{n-1}) + C_2 r_n^5 
    \Big( \underset{-r_n^2<t<0}{\sup} \int_{2r_n<|y|<r_1} \frac{|u|^2}{|y|^5} dy \Big)^{3/2} 
    + C_2 \varepsilon r_n^3 \\
  \leq& C_2 \big( 8C_1C_B^{3/2}r_n^{1/2}+1 \big) \varepsilon r_n^3 + C_2 r_n^5 
    \Big( \underset{-r_n^2<t<0}{\sup} \int_{2r_n<|y|<r_1} \frac{|u|^2}{|y|^5} dy \Big)^{3/2}.
  \end{split}
\end{equation}
In the first line of \eqref{cknprop_c3_eq2}, for the first term, we use the interpolation inequality in \Cref{interpolation} and our induction hypothesis $\eqref{binduction}_{n-1}$. For the third term, we use the initial smallness condition \eqref{smallness}. For the second term, we decompose this integral into integrals over rings and estimate it using induction hypothesis $\{\eqref{binduction}_k\}_{2 \leq k \leq n}$,
\begin{equation} \label{cknprop_c3_eq3}
  \begin{split}
  \underset{-r_n^2<t<0}{\sup} \int_{2r_n<|y|<r_1} \frac{|u|^2}{|y|^5} dy
    &\leq \sum^{n-1}_{k=2} \underset{-r_{k-1}^2<t<0}{\sup} \int_{r_k<|y|<r_{k-1}} \frac{|u|^2}{|y|^5} dy \\
    &\leq 4\sum^{n-1}_{k=2} r_k^{-3} A(r_{k-1}) \\
    &\leq 16C_B \varepsilon^{2/3} \sum^{n-1}_{k=2} r_k^{-1} \\
    &\leq 16C_B \varepsilon^{2/3} r_n^{-1}.
  \end{split}
\end{equation}
In the second inequality we use $|y|^{-5} \leq r_k^{-5}$ when $r_k<|y|<r_{k-1}$. The third inequality follows from our induction hypothesis $\{\eqref{binduction}_k\}_{2 \leq k \leq n}$. \par
Hence, from \eqref{cknprop_c3_eq1}, \eqref{cknprop_c3_eq2} and \eqref{cknprop_c3_eq3}, we can deduce that
\[ G(r_n) + G_c(r_n) + L'(r_n) \leq \big[ C_2 ( 8C_1C_B^{3/2}r_n^{1/2}+1 + 64C_B^{3/2}r_n^{1/2} ) + C_1C_B \big] \varepsilon r_n^3. \]
Now we impose the second condition on $\varepsilon>0$,
\[ \big[ C_2 ( 8C_1C_B^{3/2}r_n^{1/2}+1 + 64C_B^{3/2}r_n^{1/2} ) + C_1C_B \big] \varepsilon^{1/3} < 1. \]
It yields
\[  G(r_n)+G_c(r_n)+L'(r_n) \leq \varepsilon^{2/3} r_n^3. \]
Because $C_1$ and $C_2$ are absolute and $C_B$ only depends on $q$, the choice of $\varepsilon$ is uniform for any $(x_0,t_0) \in Q_{1/2}(0,0)$.
\end{proof}
Now we can deduce that $\eqref{binduction}_k$ holds for any $k \geq 2$. This gives
\[ \underset{t_0-r_n^2<t<t_0}{\sup} r_n^{-4} \int_{B_{r_n}(x_0)} |u|^2 dx
    \leq \limsup_{k \rightarrow \infty}\underset{t_0-r_n^2<t<t_0}{\sup} r_n^{-4} \int_{B_{r_n}(x_0)} |u_k|^2 dx 
    \leq C_B \varepsilon^{2/3} \]
for any $(x_0,t_0) \in Q_{1/2}(0,0)$ and $n \geq 2$. Hence
\[ |u(x,t)| \leq C_B^{1/2} \varepsilon^{1/3}, \]
given that $(x,t) \in Q_{1/2}(0,0)$ is a Lebesgue point of $u$.
\end{proof}

\begin{remark}
Observe that the above proof uses the term $L'$. If instead of $L'$, we were to carry out the estimate \eqref{cknprop_c3_eq2} with $L$, we would obtain $\varepsilon r^{5/2}_n$ when estimating the term in the second line of \eqref{cknprop_c3_eq2}, which cannot be bounded by $\varepsilon^{2/3} r^3_n$ uniformly in $n \in \N$.
\end{remark}

\par

The second partial regularity result corresponds to a version of Proposition 2 in \cite{caffarelli1982partial} in space dimension 4 with concentration measures. We use an idea from Lin's work \cite{lin1998new} where he gave a simpler proof for the results in \cite{caffarelli1982partial}. As a consequence, we are able improve Scheffer's result in \cite{scheffer1978navier}, to show that the 2-dimensional parabolic Hausdorff measure of the singular set of $u$ in space dimension 4 is finite.

\begin{proposition} \label{ckn2_nonsta}
There exists an absolute constant $\tau >0$ with the following property. Suppose that $(u,p,\lambda,\omega)$ is a weak solution set of the Navier-Stokes equations \eqref{navierstokes} in some cylinder $Q_{\rho}(x_0,t_0)$ in space dimension $4$ with $f \in L^q_{\text{loc}}(Q_{\rho}(x_0,t_0))$ for some $q>3$. If
\[ \underset{r \rightarrow 0}{\lim \sup} \frac{1}{r^2} \iint_{Q_r(x_0,t_0)} \big( |\nabla u|^2 dxdt + d\lambda \big) \leq \tau, \]
then $\|u\|_{ L^{\infty}(Q_{r_0}(x_0,t_0)) } < Cr_0^{-1}$ for some $0<r_0<\rho$. Note that $C=C(\varepsilon,q)$ depends on $\varepsilon$ and $q$ in \Cref{cknprop}.
\end{proposition}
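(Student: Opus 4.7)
The plan is to reduce the statement to an application of \Cref{cknprop} after rescaling to unit scale. Concretely, I will show that under the hypothesis there exists $r_* \in (0,\rho)$ such that the rescaled weak solution set
\[
u_{r_*}(y,s) = r_* u(x_0 + r_* y,\, t_0 + r_*^2 s),
\]
together with the analogously rescaled pressure, concentration measures, and force, satisfies the smallness condition \eqref{smallness} on $Q_1$. Applying \Cref{cknprop} to this rescaled set will then give $\|u_{r_*}\|_{L^\infty(Q_{1/2})} < C$, which after undoing the rescaling becomes $\|u\|_{L^\infty(Q_{r_*/2}(x_0,t_0))} < C r_*^{-1}$, as claimed.

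The force part of \eqref{smallness} is immediate: since $f \in L^q_{\text{loc}}$ with $q > 3$, absolute continuity of the integral yields $\iint_{Q_r(x_0,t_0)}|f|^q\,dxdt \to 0$ as $r \to 0$, and because $r^{3q-6}$ stays bounded for $r \leq 1$ we get $F_1(x_0,t_0,r) < \kappa$ for all sufficiently small $r$. The substantive task is to show that the smallness $\delta(r)+\delta_c(r) \leq \tau$ at small scales propagates to smallness of $G(r_*) + G_c(r_*) + K(r_*)$. I plan to combine three ingredients for this: (i) the interpolation inequalities of \Cref{interpolation}, which bound $G$, $G_c$, and $H$ in terms of $A$ together with $\delta,\delta_c$; (ii) the local energy inequality \eqref{evo_local_energy_2} applied with a smooth cutoff supported in $Q_r(x_0,t_0)$ equal to $1$ on $Q_{r/2}(x_0,t_0)$ and with the choice $\beta = \tilde{u}_{r,x_0}$, so that the cubic term reduces to $H(r) \leq C_1 A^{1/2}(r)\delta(r)$ and the pressure term, after replacing $p$ by $p-\tilde{p}_{r,x_0}$ using that $u$ is divergence-free, is estimated via $L'(r)$; and (iii) the pressure decay estimate of \Cref{pressurepreestimate_2}
\[
K(\theta r) \leq C_1 C_3 \theta^{-3} A^{1/2}(r)\delta(r) + C_3 \theta K(r),
\]
which, with $\theta$ chosen so that $C_3\theta \leq \tfrac{1}{4}$ and then iterated, forces $K$ at small scales to be controlled by $A^{1/2}\delta$.

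The main obstacle will be closing the circular dependence produced in (ii): the local energy inequality controls $A$ at scale $r/2$ in terms of $A$ at scale $r$ through the cubic and pressure terms, and the interpolation bound $H(r) \leq C_1 A^{1/2}(r)\delta(r)$ does not on its own decouple $A$ from these quantities. My intended resolution is a dyadic bootstrap on $r_n = 2^{-n} r_0$, in the spirit of Lin's \cite{lin1998new} streamlining of the iteration in \cite{caffarelli1982partial}: starting at a scale $r_0$ where $A(r_0)$ is under initial control (either from the construction of the weak solution set in \Cref{preparation_existence_nonsta} or from a preliminary rescaling), use the uniform smallness $\delta(r_n)+\delta_c(r_n) \leq \tau$ together with the pressure decay and the interpolation bounds to propagate a priori control of $A(r_n)$ inductively, choosing $\tau$ so small that the nonlinear feedback through the factors $A^{1/2}\delta$ and $A^{1/2}\delta_c$ is absorbed into a geometric contraction. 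Once $A(r_*), \delta(r_*), \delta_c(r_*)$ and $K(r_*)$ all lie below the threshold dictated by $\varepsilon$ in \Cref{cknprop}, the reduction described at the outset completes the proof.
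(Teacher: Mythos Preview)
Your overall reduction to \Cref{cknprop} via rescaling, and the idea of combining \eqref{evo_local_energy_2}, \Cref{interpolation}, and \Cref{pressurepreestimate_2} into an iteration, is the same architecture the paper uses. But there is a genuine gap in step (ii) that will prevent the bootstrap from closing.

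With a plain cutoff $\varphi$ equal to $1$ on $Q_{r/2}$ and supported in $Q_r$, you have $|\partial_t\varphi+\Delta\varphi|\lesssim r^{-2}$ on a region of full measure in $Q_r$, so the first term on the right of \eqref{evo_local_energy_2} gives, after dividing by $(r/2)^2$,
\[
(r/2)^{-2}\iint_{Q_r}|u|^2|\partial_t\varphi+\Delta\varphi|\,dxdt \;\lesssim\; G^{2/3}(r)\;\lesssim\; A(r)+\delta(r).
\]
Thus your recursion reads $A(r/2)\le C\,A(r)+C\tau+\ldots$ with an \emph{absolute} constant $C>1$, and no choice of $\tau$ makes this a contraction; $A(r_n)$ can grow geometrically and you never reach the $\varepsilon$-threshold. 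The paper fixes this by taking $\varphi=\phi_\theta$ from \Cref{alter_backwardheat}, a backward heat kernel centred at scale $\theta r$ but cut off at scale $r$. Because $\partial_t\phi_\theta+\Delta\phi_\theta$ vanishes except where the outer cutoff is active, one gets $|\partial_t\phi_\theta+\Delta\phi_\theta|\le C_5 r^{-6}$ while $\phi_\theta\ge C_5^{-1}(\theta r)^{-4}$ on $Q_{\theta r}$; the resulting bound is $I_1\le C_5\theta^2 G^{2/3}(r)$, and the extra factor $\theta^2$ is exactly what produces the contraction $A(\theta r)\le \tfrac14 A(r)+\ldots$ after $\theta$ is fixed small (see \eqref{ckn2_nonsta_eq7}--\eqref{ckn2_nonsta_eq11}).

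A second, smaller oversight: choosing $\beta=\tilde u_{r,x_0}$ in \eqref{evo_local_energy_2} only turns the $|u-\beta|^3|\nabla\varphi|$ term into $H(r)$; the separate term $\iint|u|^2(u\cdot\nabla)\varphi$ remains and is not $O(H)$ directly. The paper handles it (the term $I_2$) by splitting off $\tilde u_r$ and integrating by parts to reach $G^{1/3}A^{1/2}\delta^{1/2}$-type quantities, see \eqref{ckn2_nonsta_eq3}--\eqref{ckn2_nonsta_eq6}. You will need this step as well once you switch to the heat-kernel cutoff.
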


\begin{proof}
Because $q>3$, there holds $3q-6>3$. Then there exists $r_1>0$, such that for any $0 < r \leq r_1$,
\[ F_1(r,x_0,t_0) = r^{3q-6} \iint_{Q_r(x_0,t_0)}  |f|^{q} dxdt \leq \kappa. \]
\vspace{2mm}
\noindent 
\textbf{Claim 1:} \textit{For some $0 < r_2 \leq r_1$,}
\begin{equation} \label{ckn2_nonsta_eq0}
  r_2^{-3} \iint_{Q_{r_2}(x_0,t_0)} \Big( |u|^3 + |p|^{3/2} \Big) dxdt + r_2^{-3} \iint_{Q_{r_2}} d\omega \leq \varepsilon.
\end{equation}
If this claim holds, \Cref{cknprop} yields $\|u\|_{ L^{\infty}(Q_{r_2/2}(x_0,t_0)) } < Cr_2^{-1}$ immediately. 

\begin{proof}[Proof of Claim 1]
We use the local energy inequality \eqref{evo_local_energy_2} to derive the smallness condition \eqref{ckn2_nonsta_eq0}. Fix $r \in (0, \rho)$, $\theta \in (0,\frac{1}{2}]$ and $(x_0,t_0)=(0,0)$. We choose cutoff function $\phi_{\theta}$ as stated in \Cref{alter_backwardheat}. Then from \eqref{evo_local_energy_2} we deduce
\begin{equation} \label{ckn2_nonsta_eq1}
  \begin{split}
  \frac{1}{C_5(\theta r)^2} \limsup_{k \rightarrow \infty}\underset{-(\theta r)^2<t<0}{\sup} \int_{B_{\theta r}} |u_k|^2 dx 
  + \frac{1}{C_5(\theta r)^2} \iint_{Q_{\theta r}} \big( |\nabla u|^2 dxdt + d\lambda \big) \\
  \leq I_1 + I_2 + 3I_2' + 2I_3 + I_4,
  \end{split}
\end{equation}
where
\[
  \begin{split}
  I_1 &= (\theta r)^2\iint_{Q_r} |u|^2(\partial_t \phi_{\theta} + \Delta \phi_{\theta}) dxdt, \\
  I_2 &= (\theta r)^2\iint_{Q_r} |u|^2 u \cdot \nabla \phi_{\theta} dxdt, \\
  I_2' &= (\theta r)^2\iint_{Q_r} |\nabla \phi_{\theta}| \big( |u-\tilde{u}_r|^3 dxdt + d\omega \big), \\
  I_3 &= (\theta r)^2\iint_{Q_r} p u \cdot\nabla \phi_{\theta} dxdt, \\
  I_4 &= (\theta r)^2\iint_{Q_r} f \cdot u \phi_{\theta} dxdt.
  \end{split}
\] \par
For $I_1,I_3$ and $I_4$, we simply use H{\"o}lder's inequality and the bounds in \Cref{alter_backwardheat}. For $I_2'$, we use the interpolation inequalities in \Cref{interpolation}. Thus, we obtain
\begin{equation} \label{ckn2_nonsta_eq2}
  \begin{split}
  I_1 &\leq C_5 \theta^2 G^{2/3}(r), \\
  I_3 &\leq C_5 \theta^{-3} K^{2/3}(r) G^{1/3}(r), \\
  I_4 &\leq C_5 \theta^{-2} F_2^{1/2}(r) G^{1/3}(r), \\
  I_2' &\leq C_5 \theta^{-3} [H(r)+G_c(r)].
  \end{split}
\end{equation} \par
For $I_2$, we reduce the estimates on $u$ to estimates on $u-\tilde{u}_r$, then we use the interpolation inequality in \Cref{interpolation}. Note that $|u|^2 \in L_t^2W^{1,1}_x(Q_r)$, because $u \in L_t^2H^1_x(Q_r) \cap L_t^{\infty}L^2_x(Q_r)$. We have
\begin{equation} \label{ckn2_nonsta_eq3}
  \begin{split}
  I_2  =  & (\theta r)^2 \iint_{Q_r} |u|^2 (u-\tilde{u}_r) \cdot \nabla \phi_{\theta} dxdt
      + (\theta r)^2 \iint_{Q_r} |u|^2 \tilde{u}_r \cdot \nabla \phi_{\theta} dxdt \\
     =  & (\theta r)^2 \iint_{Q_r} |u-\tilde{u}_r|^2 (u-\tilde{u}_r) \cdot \nabla \phi_{\theta} dxdt
      + 2 (\theta r)^2 \iint_{Q_r} u \cdot \tilde{u}_r (u-\tilde{u}_r) \cdot \nabla \phi_{\theta} dxdt \\
      & + (\theta r)^2 \iint_{Q_r} |\tilde{u}_r|^2 (u-\tilde{u}_r) \cdot \nabla \phi_{\theta} dxdt
      + (\theta r)^2 \iint_{Q_r} |u|^2 \tilde{u}_r \cdot \nabla \phi_{\theta} dxdt \\
    \leq& C_5(\theta r)^{-3} \iint_{Q_r} |u-\tilde{u}_r|^3 dxdt
      - 2 (\theta r)^2 \iint_{Q_r} \big[\big((u-\tilde{u}_r) \cdot \nabla\big) u\big] \cdot \tilde{u}_r \phi_{\theta} dxdt \\
      & - 2(\theta r)^2 \iint_{Q_r} u \cdot \big((\tilde{u}_r \cdot \nabla) u\big) \phi_{\theta} dxdt,
  \end{split}
\end{equation}
where we use the bounds in \Cref{alter_backwardheat} and integration by parts. Notice that $u$ and $u-\tilde{u}_r$ are divergence-free. Furthermore, we have
\begin{equation} \label{ckn2_nonsta_eq4}
  \begin{split}
  \iint_{Q_r} \big[&\big((u-\tilde{u}_r) \cdot \nabla\big) u\big] \cdot \tilde{u}_r \phi_{\theta} dxdt
  \leq C_5 (\theta r)^{-4} \int_{-r^2}^0 r^{-4} \int_{B_r} |u|dx \int_{B_r} |u-\tilde{u}_r| |\nabla u| dx dt \\
  &\leq C_5 \theta^{-4} r^{-16/3} \int_{-r^2}^0 \Big(\int_{B_r}|u|^3dx\Big)^{1/3} \Big(\int_{B_r}|u-\tilde{u}_r|^2dx\Big)^{1/2} \Big(\int_{B_r}|\nabla u|^2dx\Big)^{1/2} dt \\
  &\leq C_5 \theta^{-4} r^{-5} \Big(\iint_{Q_r}|u|^3dxdt\Big)^{1/3} \Big(\iint_{Q_r}|\nabla u|^2dxdt\Big)^{1/2} 
    \Big( \underset{-r^2<t<0}{\sup} \int_{B_r}|u|^2dx \Big)^{1/2}.
  \end{split}
\end{equation}
The first inequality follows from the bounds in \Cref{alter_backwardheat}. The remaining inequalities follow from H{\"o}lder's inequality. Analogously,
\begin{equation} \label{ckn2_nonsta_eq5}
  \begin{split}
  \iint_{Q_r} u \cdot & \big((\tilde{u}_r \cdot \nabla) u\big) \phi_{\theta} dxdt \\
  &\leq C_5 \theta^{-4} r^{-5} \Big(\iint_{Q_r}|u|^3dxdt\Big)^{1/3} \Big(\iint_{Q_r}|\nabla u|^2dxdt\Big)^{1/2} 
    \Big( \underset{-r^2<t<0}{\sup} \int_{B_r}|u|^2dx \Big)^{1/2}.
  \end{split}
\end{equation}
Now, from \eqref{ckn2_nonsta_eq3}, \eqref{ckn2_nonsta_eq4} and \eqref{ckn2_nonsta_eq5}, we deduce
\begin{equation} \label{ckn2_nonsta_eq6}
  \begin{split}
  I_2 \leq 2C_5 \theta^{-2} G^{1/3}(r) A^{1/2}(r) \delta^{1/2}(r) + C_5 \theta^{-3}H(r).
  \end{split}
\end{equation} \par
Now, we are in a position to plug \eqref{ckn2_nonsta_eq2} and \eqref{ckn2_nonsta_eq6} into \eqref{ckn2_nonsta_eq1} and to invoke the interpolation inequality in \Cref{interpolation},
\begin{equation} \label{ckn2_nonsta_eq7}
  \begin{split}
  A(\theta r) + 2 [\delta(\theta r)+\delta_c(\theta r)] \lesssim& C_5^2
    \Big( \theta^2 G^{2/3}(r) + \theta^{-3} K^{2/3}(r) G^{1/3}(r) \\
      &+ 2\theta^{-2} F_2^{1/2}(r) G^{1/3}(r) + \theta^{-3} [H(r)+G_c(r)] \\
      &+ 3\theta^{-2} G^{1/3}(r) A^{1/2}(r) \delta^{1/2}(r) \Big) \\
    \leq& C_5^2 \Big( 4\theta^2 G^{2/3}(r) + \theta^{-8} K^{4/3}(r) + \theta^{-6} F_2(r) \\
      &+ C_1\theta^{-3} A^{1/2}(r) [\delta(r)+\delta_c(r)] + \frac{9}{4}\theta^{-6} A(r) \delta(r) \Big) \\
    \leq& C_5^2 \Big[ 4C_1\theta^2 A(r) + 5C_1\theta^2 [\delta(r)+\delta_c(r)] + C_1\theta^{-8} A(r) [\delta(r)+\delta_c(r)] \\
      & + \frac{9}{4}\theta^{-6} A(r) \delta(r) + \theta^{-8} K^{4/3}(r) + \theta^{-6} F_2(r) \Big].
  \end{split}
\end{equation}
In the second inequality, we use Young's inequality to move $\theta G^{1/3}(r)$ to the first term. In the third inequality, we use the interpolation inequality in \Cref{interpolation}. And Young's inequality moves $\theta^{-3} A^{1/2}(r) \delta^{1/2}(r)$ to the term $4\theta^{-6} A(r) \delta(r)$. \par
On the other hand, with \Cref{pressurepreestimate_2} we deduce
\begin{equation} \label{ckn2_nonsta_eq8}
  \begin{split}
  K^{4/3}(\theta r) &\lesssim C_1^{4/3}C_3^{4/3} \theta^{-4} A^{2/3}(r) \delta^{4/3}(r) 
    + C_3^{4/3} \theta^{4/3} K^{4/3}(r) \\
  &\lesssim C_1^2 C_3^2 \theta^{-12} A(r) \delta(r) + \theta^{12} \delta^2(r) + C_3^{4/3} \theta^{4/3} K^{4/3}(r).
  \end{split}
\end{equation} \par
Taking the sum of \eqref{ckn2_nonsta_eq7} and $\theta^{-9} \times \eqref{ckn2_nonsta_eq8}$ gives
\begin{equation} \label{ckn2_nonsta_eq9}
  \begin{split}
  A(\theta r) + &\theta^{-9} K^{4/3}(\theta r) + 2[\delta(\theta r) + \delta_c(\theta r)] \\
  \lesssim& 4C_1C_5^2\theta^2 A(r) + 5C_1C_5^2\theta^2[\delta(r)+\delta_c(r)] + \theta^3 \delta^2(r) \\ 
    & + \Big(\frac{9}{4}C_5^2+C_1^2C_3^2\theta^{-15}+C_1C_5^2 \theta^{-2}\Big)\theta^{-6} A(r) [\delta(r)+\delta_c(r)] \\
    & + (C_5^2 \theta + C_3^{4/3} \theta^{4/3})\theta^{-9} K^{4/3}(r) + C_5^2\theta^{-6} F_2(r).
  \end{split}
\end{equation}
Since $C_1,C_3$ and $C_5$ are absolute positive constants, we can fix $\theta \in (0,\frac{1}{2}]$ such that
\begin{equation} \label{ckn2_nonsta_eq10}
  \begin{split}
  4C_1C_5^2\theta^2 &\lesssim \frac{1}{4}, \\
  C_5^2 \theta + C_3^{4/3} \theta^{4/3} &\lesssim \frac{1}{2}, \\
  \big(5C_1C_5^2 + 1\big)\theta^2 &\lesssim \frac{1}{8}.
  \end{split}
\end{equation}
Then we choose $\tau \in (0,1)$ such that
\begin{equation} \label{ckn2_nonsta_eq10}
  \begin{split}
  \Big(\frac{9}{4}C_5^2+C_1^2C_3^2\theta^{-10}+C_1C_5^2 \theta^{-2}\Big)\theta^{-6} \cdot 2\tau \lesssim \frac{1}{4}.
  \end{split}
\end{equation}
Because
\[ \underset{r \rightarrow 0}{\lim \sup} \text{ } \delta(r) + \delta_c(r) \leq \tau,
  \quad \underset{r \rightarrow 0}{\lim} \text{ } F_2(r) = 0, \]
we can choose $r'>0$ such that for any $0 < r \leq r'$,
\[ \delta(r) + \delta_c(r) \leq 2\tau, \quad C_5^2\theta^{-6} F_2(r) \leq \frac{\tau}{4}. \]
\par
Let $E(r) = A(r) + \theta^{-9} K^{4/3}(r) + 2[\delta(r) + \delta_c(r)]$, then \eqref{ckn2_nonsta_eq9} yields for any $0 < r \leq r'$,
\begin{equation} \label{ckn2_nonsta_eq11}
  \begin{split}
  E(\theta r) \lesssim \frac{1}{2} E(r) + \frac{\tau}{2}.
  \end{split}
\end{equation}
Iterating this inequality yields for any $k \in \N$
\[ A(\theta^k r') + \theta^{-9} K^{4/3}(\theta^k r') + 2[\delta(\theta^k r') + \delta_c(\theta^k r')] = E(\theta^k r') \lesssim \frac{1}{2^k} E(r') + \tau. \]
Then there exists some $r_2>0$ such that
\[ A(r_2) + \delta(r_2) + \delta_c(r_2) + \theta^{-9} K^{4/3}(r_2) \lesssim 4 \tau. \]
Again by the interpolation inequality in \Cref{interpolation}, we can bound $G(r_2) + G_c(r_2)$ with $A(r_2) + \delta(r_2) + \delta_c(r_2)$. Then we can impose another condition on $\tau$ to ensure \eqref{ckn2_nonsta_eq0}. This additional condition on the choice of $\tau$ depends on $\varepsilon, C_1$ and $\theta$, so it does not produce any circular reasoning. This concludes the proof.
\end{proof}
\end{proof}

Now, we give definitions to singular set of weak solution set and parabolic Hausdorff measure of a space-time set.

\begin{definition} \label{singularset_nonsta}
Suppose that $(u,p,\lambda,\omega)$ is a weak solution set of the Navier-Stokes equation in $\R^4 \times [0,T]$. A point $(x,t) \in \R^4 \times (0,T]$ is called a regular point if there exists $r>0$ such that $u \in L^{\infty}(Q_r(x,t))$. Otherwise, $(x,t)$ is called a singular point. The singular set is the set of all singular points.
\end{definition}

\begin{definition} \label{parabolichausdorff}
Given a set $D \subset \R^4 \times \R$, for a fixed positive real number $s$, $s$-dimensional parabolic Hausdorff measure is defined as
\[ \mathcal{P}^s (D) = \lim_{\delta \rightarrow 0^+} \mathcal{P}^s_{\delta} (D), \]
where
\[ \mathcal{P}^s_{\delta} (D) 
  = \inf \Big\{ \sum_{i=1}^{\infty} r_i^s \Big| 
  D \subset \bigcup_{i \in \N^+} Q^{\ast}_{r_i}(x_0,t_0), 0 < r_i < \delta, (x_0,t_0) \in \R^4 \times \R \Big\}. \]
Here, $Q^{\ast}_r(x,t)$ is centered parabolic cylinder defined by
\[ Q^{\ast}_r(x,t) := B_r(x) \times \Big( t-\frac{r^2}{2}, t+\frac{r^2}{2} \Big). \]
\end{definition}

\Cref{maintheorem_nonsta} follows from \Cref{preparation_existence_nonsta}, \Cref{energy_convergence}, \Cref{ckn2_nonsta} and the following standard covering argument.
\begin{proof}[Proof of \Cref{maintheorem_nonsta}]
We assume $S$ is bounded and $S \subset B_{\rho_0} \times [0,T]$ for some $\rho_0>0$. Let $D':= \overline{B_{\rho_0+1}(\R^4)} \times [0,T]$. Let $V$ be a parabolic neighborhood (neighborhood given by parabolic cylinders) of $S$ in $D'$ and fix $\delta > 0$. According to \Cref{ckn2_nonsta}, for each $(x,t) \in S$, we choose $Q_r(x,t) \subset V$ with $r < \delta$ such that
\[ r^{-2} \iint_{Q_r(x,t)} \big( |\nabla u|^2 dxdt + d\lambda \big)
  > \tau. \]
\par

Because $S$ is bounded, we can use Vitali Covering lemma to obtain a family of disjoint parabolic cylinders $\{Q_{r_i}(x_i,t_i)\}_{i \in \Lambda}$ such that
\[ S \subset \bigcup_{i \in \Lambda} Q_{5r_i}(x_i,t_i). \]
Here $\Lambda$ is a finite set. Then
\[ \sum_{i \in \Lambda} r_i^2 
  \leq \frac{1}{\tau} \sum_{i \in \Lambda} \iint_{Q_{r_i}(x_i,t_i)} \big( |\nabla u|^2 dxdt + d\lambda \big)
  \leq \frac{1}{\tau} \iint_V \big( |\nabla u|^2 dxdt + d\lambda \big). \]
Since $\delta$ is arbitrary, we know the Lebesgue measure of $S$ is zero and
\begin{equation}
  \label{nsmt1_eq1}
  \mathcal{P}^2(S) \leq \frac{5}{\tau} \iint_V \big( |\nabla u|^2 dxdt + d\lambda \big).
\end{equation}
\par
In case that $S$ is unbounded, we look at $S \cap B_r \times [0,T]$ with $r \rightarrow \infty$, then $\mathcal{P}^2(S\cap B_r \times [0,T])$ is bounded uniformly in $r$, which concludes our proof.
\end{proof}

\begin{remark}
Dong, Gu \cite{dong2014partial} and Wang, Wu \cite{wang2014unified} proved that suitable weak solutions satisfy $\mathcal{P}^2(S) =0$, but they were not able to show that such solutions exist. Here, we can only prove $\mathcal{P}^2(S) < \infty$, since the presence of the concentration measures leads to nontriviality of the Hausdorff measure of the singular set $S$ in this covering argument.
\end{remark}

\section{acknowledgements}
I would like to express my gratitude to my advisor Michael Struwe for reading my draft carefully and giving me helpful feedback. I would also like to thank the referees for very useful comments to improve this paper.

\appendix

\section{Fractional power of nonnegative smooth function} \label{app1}

In this appendix, we prove that certain fractional power of any nonnegative smooth function is Lipschitz continuous, which is a direct consequence of the following lemma proved by Fefferman and Phong \cite{fefferman1978positivity}.

\begin{lemma}[Fefferman and Phong \cite{fefferman1978positivity}; Lemma 4, Guan \cite{guan1997c_2}] \label{guan}
If $f:\R^n \rightarrow \R$ is a $C^{3,1}$ nonnegative function, with $\|f\|_{C^4} \leq A$, then there is $N \in \N$ (only depends on $n$) and functions $g_1, g_2, \ldots, g_N \in C^{1,1}$, with $\|g_j\|_{C^2} \leq C$, such that
\[ f=\sum_{j=1}^{N} g_j^2, \]
where the constant $C$ depends on $n$ and $A$.
\end{lemma}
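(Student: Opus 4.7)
The plan is to follow the local-to-global strategy of Fefferman and Phong. After a locally finite partition of unity it is enough to produce, in a small ball $B(x_0, r_0)$ around each point $x_0 \in \R^n$, a decomposition $f = \sum_{j=1}^{N_0} g_j^2$ with $C^2$-bounds on the $g_j$ uniform in $x_0$. Re-indexing and gluing then yield a global decomposition, and the absolute bound $N = N(n)$ comes from controlling how many local squares survive the gluing.

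For the local step I would split according to the size of $f(x_0)$. If $f(x_0) \geq c_0 = c_0(A) > 0$, then $f$ stays bounded below on a fixed ball around $x_0$; the implicit function theorem applied to $g \mapsto g^2 - f$ yields $g = \sqrt{f} \in C^{3,1}$ with norm controlled by $A$ and $c_0$, and the single square $g^2 = f$ suffices. The genuine difficulty is near the zero set of $f$. Here the crucial structural input is the Glaeser-type inequality $|\nabla f(y)|^2 \leq C(A)\, f(y)$, valid pointwise for nonnegative $C^{1,1}$ functions; in particular $\nabla f(x_0) = 0$ and $D^2 f(x_0)$ is positive semidefinite whenever $f(x_0) = 0$. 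Diagonalizing $D^2 f(x_0) = \sum_i \lambda_i v_i v_i^T$ with $\lambda_i \geq 0$ and introducing the linear forms $\ell_i(x) := \sqrt{\lambda_i/2}\, v_i \cdot (x-x_0)$, one obtains $f(x) = \sum_i \ell_i(x)^2 + R(x)$ with $R$ nonnegative and of strictly higher vanishing order at $x_0$; one then iterates the construction on $R$.

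The main obstacle is propagating $C^{1,1}$-regularity through this iteration: each remainder must remain nonnegative with controlled $C^2$-norm, and the square-root building blocks from different regions must be glued into globally $C^{1,1}$ pieces. Fefferman and Phong handle this via a Whitney-type decomposition adapted to the quasi-ball structure governed by the relative sizes of $f$, $|\nabla f|^2$ and $\|D^2 f\|$, and it is exactly here that the full $C^4$-bound $\|f\|_{C^4} \leq A$ is used in an essential way. The iteration terminates after finitely many steps, bounded in terms of $n$ alone, which yields the dimension-only bound on $N$.

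Once \Cref{guan} is in hand, the Lipschitz continuity of fractional powers such as $f^{2/3}$ that the appendix is aimed at follows in one line. Writing $f = \sum_j g_j^2$ and applying Cauchy--Schwarz, one has on $\{f > 0\}$
\[
|\nabla f^{2/3}| \;=\; \tfrac{2}{3} f^{-1/3}|\nabla f| \;\leq\; \tfrac{4}{3}\, f^{1/6}\Bigl(\sum_j |\nabla g_j|^2\Bigr)^{1/2},
\]
which is uniformly bounded on the support of $f$ by the $C^2$-bounds on the $g_j$; since $f^{2/3}$ is continuous and constant on the interior of $\{f = 0\}$, this gradient bound upgrades to a global Lipschitz estimate.
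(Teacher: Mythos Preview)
The paper does not prove this lemma at all: it is stated as a cited result of Fefferman--Phong (with the reference to Guan for a convenient formulation), and then used as a black box to derive Corollary~\ref{nonnegative_smooth}. So there is nothing to compare your sketch against; you have supplied considerably more than the paper does. Your outline of the Fefferman--Phong argument (partition of unity, trivial square-root branch where $f$ is bounded below, Glaeser inequality and diagonalization of $D^2f$ near the zero set, Whitney-type decomposition to glue) is faithful to the original strategy, though of course the hard work---the uniform $C^{1,1}$ control through the iteration and the dimension-only bound on $N$---is precisely the part you defer to the reference, as does the paper.

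One remark: the final paragraph of your proposal, deriving Lipschitz continuity of $f^{2/3}$, is not part of the statement of \Cref{guan} but rather the content of the subsequent Corollary~\ref{nonnegative_smooth}. The paper proves that corollary for all $\alpha\in[\tfrac12,1]$ by bounding $|h'|$ term-by-term via
\[
|h'| = \frac{2\alpha\bigl|\sum_j g_j g_j'\bigr|}{\bigl(\sum_i g_i^2\bigr)^{1-\alpha}} \leq \sum_j 2\alpha\,|g_j|^{2\alpha-1}|g_j'|,
\]
whereas you use Cauchy--Schwarz to get $|\nabla f|\leq 2f^{1/2}\bigl(\sum_j|\nabla g_j|^2\bigr)^{1/2}$ and hence $|\nabla f^{2/3}|\leq \tfrac43 f^{1/6}\bigl(\sum_j|\nabla g_j|^2\bigr)^{1/2}$. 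Both arguments are correct; yours is perhaps more transparent for the specific exponent $2/3$ needed in \Cref{pressure_convergence_nonsta}, while the paper's handles the full range $\alpha\in[\tfrac12,1]$ uniformly.
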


\begin{corollary} \label{nonnegative_smooth}
Suppose $f:\R^n \rightarrow \R$ is a $C^{3,1}$ nonnegative function, then $h:=f^{\alpha}$ is Lipschitz continuous for any $\alpha \in [\frac{1}{2},1]$.
\end{corollary}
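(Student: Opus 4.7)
The plan is to apply the Fefferman--Phong decomposition (Lemma A.1) to write $f$ as a finite sum of squares of $C^{1,1}$ functions, and then reduce a general exponent $\alpha \in [1/2,1]$ to the critical case $\alpha = 1/2$ by composition with a power function.

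First I would invoke Lemma A.1 (applied locally on bounded sets, since the stated hypothesis involves a bound on $\|f\|_{C^4}$) to obtain a decomposition $f = \sum_{j=1}^N g_j^2$ with each $g_j \in C^{1,1}$, and hence each $g_j$ Lipschitz with some constant $L_j$. Viewing $G := (g_1,\ldots,g_N)$ as a map $\R^n \to \R^N$, we have $\sqrt{f(x)} = |G(x)|$ in the Euclidean norm, and the reverse triangle inequality gives
$$\bigl|\sqrt{f(x)} - \sqrt{f(y)}\bigr| = \bigl||G(x)| - |G(y)|\bigr| \leq |G(x) - G(y)| \leq \Bigl(\sum_{j=1}^N L_j\Bigr)|x-y|,$$
so $\sqrt{f}$ is Lipschitz. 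This handles the endpoint $\alpha = 1/2$, which is the whole heart of the matter.

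For $\alpha \in (1/2,1]$, I would write $f^\alpha = (\sqrt{f})^{2\alpha}$ with $2\alpha \in [1,2]$. The real-variable function $t \mapsto t^{2\alpha}$ on $[0,\infty)$ has derivative $2\alpha\, t^{2\alpha-1}$ with nonnegative exponent, hence is Lipschitz on any bounded subinterval of $[0,\infty)$ without blow-up at $t=0$. Composing with $\sqrt{f}$, which is Lipschitz and bounded on compacts, then yields that $f^\alpha$ is Lipschitz. The main obstacle was already overcome in the previous step: a naive chain-rule computation $\nabla\sqrt{f} = \nabla f/(2\sqrt{f})$ appears to produce a singularity at the zeros of $f$, and it is precisely the quadratic structure of the Fefferman--Phong decomposition that cancels this singularity and produces a bona fide Lipschitz bound. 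Beyond this, the extension to larger exponents is a routine composition argument.
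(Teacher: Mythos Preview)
Your proof is correct and uses the same key input (the Fefferman--Phong decomposition of Lemma~A.1), but the route differs from the paper's. The paper bounds $|h'|$ directly for all $\alpha \in [\tfrac12,1]$ in one stroke: writing $h=f^\alpha=(\sum_j g_j^2)^\alpha$, it computes
\[
|h'| \;=\; \frac{2\alpha\,\bigl|\sum_j g_j g_j'\bigr|}{\bigl(\sum_i g_i^2\bigr)^{1-\alpha}}
\;\le\; \sum_j \frac{2\alpha\,|g_j g_j'|}{\bigl(\sum_i g_i^2\bigr)^{1-\alpha}}
\;\le\; \sum_j 2\alpha\,|g_j|^{2\alpha-1}|g_j'|,
\]
using $\sum_i g_i^2 \ge g_j^2$ in the last step; since $2\alpha-1\ge 0$ and each $g_j\in C^{1,1}$, the right-hand side is locally bounded. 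Your argument instead isolates the critical case $\alpha=\tfrac12$ via the reverse triangle inequality applied to $|G|$ with $G=(g_1,\dots,g_N)$, which is arguably cleaner because it sidesteps any discussion of differentiability at the zero set of $f$, and then recovers general $\alpha$ by composing with the locally Lipschitz map $t\mapsto t^{2\alpha}$ on $[0,\infty)$. The paper's approach is more uniform (one inequality for all $\alpha$), while yours makes the structural reason for the endpoint $\alpha=\tfrac12$ more transparent. Both yield only \emph{local} Lipschitz continuity, consistent with the $C^4$-bound hypothesis in Lemma~A.1.
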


\begin{proof}
This result follows from \Cref{guan} and the following bound,
\[
  \begin{split}
  |h'| &= \frac{2\alpha \big|\sum_{j=1}^{N}g_jg'_j\big|} {\big(\sum_{i=1}^{N} g_i^2\big)^{1-\alpha}} \\
  &\leq \sum_{j=1}^{N} \frac{2\alpha \big|g_j^{2\alpha-1}g'_j\big|} {\big(\sum_{i=1}^{N} (g_i/g_j)^2\big)^{1-\alpha}} \\
  &\leq \sum_{j=1}^{N} 2\alpha \big|g_j^{2\alpha-1}g'_j\big|.
  \end{split}
\]
\end{proof}

%
%



\end{document}